\documentclass[12pt]{article}
\usepackage[latin1]{inputenc}

\usepackage{amsfonts}
\usepackage{amssymb}
\usepackage{amsthm}
\usepackage{amsmath}
\usepackage{graphicx}
\usepackage{color}
\oddsidemargin -0.45in \evensidemargin -0.45in \textwidth=181mm
\topmargin=-1.3in \textheight=25cm
\usepackage{amscd}
\usepackage{multirow}

\newcommand{\CC}{{\mathbb C}}
\newcommand{\C}{{\mathbb C}}
\newcommand{\cD}{{\mathcal D}}
\newcommand{\cG}{{\mathcal G}}
\newcommand{\cH}{{\mathcal H}}
\newcommand{\cO}{{\mathcal O}}
\newcommand{\cL}{{\mathcal L}}
\newcommand{\cQ}{{\mathcal Q}}
\newcommand{\cM}{{\mathcal M}}
\newcommand{\cF}{{\mathcal F}}
\newcommand{\cE}{{\mathcal E}}

\newcommand{\RR}{{\mathbb R}}
\newcommand{\R}{{\mathbb R}}
\newcommand{\ZZ}{{\mathbb Z}}
\newcommand{\Z}{{\mathbb Z}}
\newcommand{\NN}{{\mathbb N}}
\newcommand{\N}{{\mathbb N}}
\newcommand{\QQ}{{\mathbb Q}}

\newcommand{\PP}{{\mathbb P}}

\newcommand\inw{{{\operatorname{in}}_{\omega}}}

\newcommand\HAb {{H_A (\beta )}}
\newcommand\HHAb {{\mathcal{M}_A (\beta )}}

\newcommand\coefn{{(v)_{u_{-}}}}
\newcommand\coefd{{(v+u)_{u_{+}}}}
\newcommand\coefmu{{\frac{\coefn}{\coefd}}}

\newcommand\inww{{{\operatorname{in}}_{(-\omega , \omega )}}}

\newcommand\finw{{{\operatorname{fin}}_{\omega}(\HAb )}}
\newcommand\Irr{\operatorname{Irr}}
\newcommand\Sol{\operatorname{Sol}}
\newcommand\Per{\operatorname{Per}}

\newcommand{\nsup}{\operatorname{nsupp}}

\newtheorem{theorem}{Theorem}[section]
\newtheorem{proposition}[theorem]{Proposition}
\newtheorem{definition}[theorem]{Definition}

\newtheorem{lemma}[theorem]{Lemma}
\newtheorem{corollary}[theorem]{Corollary}
\newtheorem{remark}[theorem]{Remark}

\newtheorem{notation}[theorem]{Notation}

\title{Gevrey solutions of the irregular hypergeometric system associated with an affine monomial curve}
\author{M.C. Fern\'{a}ndez-Fern\'{a}ndez and F.J. Castro-Jim\'{e}nez \thanks{Both
authors partially supported by MTM2007-64509 and FQM333. The first
author is also supported by the FPU Grant AP2005-2360, MEC (Spain).
e.mail addresses: {\tt mcferfer@us.es}, {\tt castro@us.es}}\\
Departamento de \'{A}lgebra \\ Universidad de Sevilla}
\date{20 November 2008}

\begin{document}

\maketitle
\begin{abstract}
We describe the Gevrey series solutions at singular points of the
irregular hypergeometric system (GKZ system) associated with an
affine monomial curve. We also describe the  irregularity complex of
such a  system with respect to its singular support.
\end{abstract}

\section*{Introduction}
We explicitly describe the Gevrey solutions of the hypergeometric
system associated with an affine  monomial  curve in $\CC^n$ by
using $\Gamma$--series introduced by I.M. Gel'fand, M.I. Graev, M.M.
Kapranov and A.V. Zelevinsky  \cite{GGZ}, \cite{GZK} and also used
by M. Saito, B. Sturmfels and N. Takayama \cite{SST} in a very
useful and slightly different form. We use these $\Gamma$--series to
study the {\em Gevrey filtration of the irregularity complex} --as
defined by Z. Mebkhout in \cite[Sec. 6]{Mebkhout}-- of the
corresponding analytic hypergeometric $\cD$--module with respect to
its singular support. Here $\cD$ is the sheaf of linear differential
operators with holomorphic coefficients on $\CC^n$.

A general hypergeometric system $M_A(\beta)$ is associated with a
pair $(A,\beta)$ where $A=(a_{ij})$ is an integer $d\times n$ matrix
of full rank $d$ and $\beta$ is a parameter vector in $\CC^d$
(\cite{GGZ}, \cite{GZK1}, \cite{GZK}). The system $M_A(\beta)$ is
defined by the following system of linear partial differential
equations in the unknown $\varphi$:
$$ \prod_{i=1}^n\left(\frac{\partial  }{\partial x_i}\right)^{u_i}(\varphi)-
\prod_{i=1}^n\left(\frac{\partial }{\partial
x_i}\right)^{v_i}(\varphi )=0 \, \, {\mbox{ for }} (u,v\in \NN^n, \,
Au=Av)
$$
$$\sum_{j=1}^n a_{ij}x_j \frac{\partial \varphi }{\partial x_j}
-\beta_i\varphi =0 \, \, {\mbox{ for }} i=1,\ldots,d.
$$

If $d=1$ (i.e. if $A=(a_1\, a_2 \, \cdots \, a_n)$) we say that the
hypergeometric system $M_A(\beta)$ is associated with the affine
monomial curve defined by $A$ in $\CC^n$. If $d=2$ and the vector
$(1,\cdots,1)$ is in the $\QQ$-row span of $A$ then we say that
$M_A(\beta)$ is associated with the projective monomial curve
defined by $A$ in $\PP_{n-1}(\CC)$.

Rational solutions of the hypergeometric system associated with a
projective monomial curve have  been studied in
\cite{cattani-dandrea-dickenstein-99}. The holomorphic solutions of
general hypergeometric systems at a generic point in $\CC^n$  have
been widely studied (see e.g. \cite{GZK1}, \cite{GZK},
\cite{Adolphson}, \cite{SST}, \cite{ohara-takayama-2007}). The rank
of a general $M_A(\beta)$ is by definition the number of linearly
independent holomorphic solutions of $M_A(\beta)$ at a generic point
in $\CC^n$. This rank  equals, for generic $\beta\in \CC^d$ and
assuming $\ZZ A= \ZZ^d$, the normalized volume $vol(\Delta)$ of the
convex hull $\Delta$ of the columns of $A$ and the origin,
considered as points in $\RR^d$ (\cite{GZK} and \cite{Adolphson}).

Several recent papers are devoted to the study of the exceptional
parameters $\beta$, i.e. for which the rank jumps (see \cite{MMW05}
and the references therein, see also \cite{Berkesch}).

To setup the problem of computing the Gevrey solutions of a left
$\cD$--module, we will first recall the situation in the one
dimensional case.

Let $P=a_m \frac{d^m}{dx^m} + \cdots + a_1 \frac{d}{dx}+a_0$ be an
ordinary linear differential operator, of order $m$,  with
$a_i=a_i(x)$ a holomorphic function at the origin in $\CC$. The {\em
slopes} of $P$ are the slopes of the Newton polygon $N(P)$ of $P$
defined as the convex hull of the union of the quadrants
$(i,i-v(a_i)) + (\ZZ_{\leq 0})^2$ for $i=0,\ldots,m$, where $v(a_i)$
is the multiplicity of the zero of $a_i(x)$ at $x=0$. By Fuchs'
Theorem, $P$ is regular at $x=0$ if and only if $N(P)$ is the
quadrant $(m,m-v(a_m)) + (\ZZ_{\leq 0})^2$.

Malgrange-Komatsu's comparison Theorem  states that $P$ is regular
at $x=0$ if and only if the solution space
$\Sol(P;\widehat{\cO}/\cO)$ is zero (\cite{komatsu-index-1971},
\cite{Malgrange}). Here $P$ acts naturally  on the quotient
$\widehat{\cO}/\cO$ where $\widehat{\cO}=\CC[[x]]$ (resp.
$\cO=\CC\{x\}$) is the ring of formal (resp. convergent) power
series in the variable $x$. This solution space measures the
irregularity of $P$ at $x=0$ and will be denoted $\Irr_0(P):=\Sol(P;
\widehat{\cO}/\cO)$.

Moreover, the vector space $\Irr_0(P)$ is filtered by the so-called
Gevrey solutions of the equation $P(u)=0$. Let us denote by
$\widehat{\cO}_s \subset \widehat{\cO}$ the subring of Gevrey series
of order less than or equal to $s$ (where $s\geq 1$ is a real
number). A formal power series $f=\sum_i f_i x^i\in \widehat{\cO}$
is in $\widehat{\cO}_s$ if and only if the series $\rho_s(f):=\sum_i
(i!)^{1-s} {f_i} x^i$ is convergent at $x=0$. The irregularity
$\Irr_0(P)$ is filtered by the solution subspaces
$\Irr_0^{(s)}(P):=\Sol(P; \widehat{\cO}_s/\cO)$. By the comparison
Theorem of J.P. Ramis \cite{Ramis} the jumps in this filtration are
in  bijective correspondence with the slopes of $N(P)$ and the
dimension of each $\Irr_0^{(s)}(P)$ can also be read on $N(P)$.

In higher dimension, one has analogous results but the situation is
much more involved. Let $X$ be a complex manifold of dimension
$n\geq 1$ and let us denote by $\cO_X$ (resp. $\cD_X$) the sheaf of
rings of holomorphic functions on $X$ (resp. of linear differential
operators with holomorphic coefficients). The role of the previous
equation $P(u)=0$ is played here by a maximally overdetermined
system of linear partial differential equations on $X$, or
intrinsically by a holonomic $\cD_X$--module $\cM$. Z. Mebkhout
introduced in \cite{Mebkhout} the irregularity of $\cM$ along a
smooth hypersurface $Y\subset X$ as the complex of sheaves of vector
spaces $\Irr_Y(\cM):=\RR \cH om_{\cD_X}(\cM, \cQ_Y)$ where $\cQ_Y$
is the quotient $\cQ_Y=\frac{\cO_{\widehat{X|Y}}}{\cO_{X|Y}}$ and
$\cO_{\widehat{X|Y}}$ is the formal completion of $\cO_X$ with
respect to $Y$. This irregularity is nothing but the solution
complex of $\cM$ with values in $\cQ_Y$. Z. Mebkhout also introduced
the irregularity of order $s$ of $\cM$ along $Y$ as the solution
complex of $\cM$ with values in $\cQ_Y(s)$, i.e.
$\Irr_Y^{(s)}(\cM):=\RR \cH om_{\cD_X}(\cM, \cQ_Y(s))$ where
$\cQ_Y(s)$ is the quotient of the Gevrey series
${\cO_{\widehat{X|Y}}(s)}$  of order less than or equal to $s$ along
$Y$ by the sheaf $\cO_{X\vert Y}$. Z. Mebkhout proved
\cite{Mebkhout} that these irregularity complexes belong to
$\Per(\CC_Y)$, the abelian category of perverse sheaves on $Y$ and
that $\Irr_Y^{(s)}(\cM)$ is an increasing filtration of the
irregularity $\Irr_Y(\cM)$. A real number $s>1$ is said to be an
{\em analytic slope}  of $\cM$ along $Y$ at a point $p\in Y$ if the
inclusion $\Irr_Y^{(s')}(\cM)\subset \Irr_Y^{(s)}(\cM)$ is strict in
a neighborhood of $p\in Y$ for $1<s'<s$.

On the other hand, Y. Laurent defined the {\em algebraic slopes} of
$\cM$ by using  interpolations of the order filtration and the
Malgrange-Kashiwara filtration along $Y$ \cite{Laurent-ens-87}. Y.
Laurent also proved that these algebraic slopes form a finite set of
rational numbers.

The comparison Theorem of Y. Laurent and Z. Mebkhout
\cite{Laurent-Mebkhout} proves that the analytic and the algebraic
slopes coincide. Although the slopes of a $\cD_X$--module $\cM$ can
be computed by an algorithm when $\cM$ is algebraic \cite{ACG},
computations of the irregularity $\Irr_Y^{(s)}(\cM)$ are very rare.
More precisely, under some conditions on the matrix $A$, the slopes
of a hypergeometric system $M_A(\beta)$ can be combinatorially
described from the matrix $A$ (\cite{schulze-walther}, see also
\cite{Castro-Takayama}, \cite{hartillo_trans}).

In this paper we explicitly describe the irregularity
$\Irr_Y^{(s)}(\cM)$ for any $s$ and for $\cM=\cM_A(\beta)$ the
hypergeometric system associated with an affine monomial curve when
the associated semigroup is positive. Here $Y$ is the singular
support of the system.

The paper has the following structure. In Sections
\ref{Gevrey-series} and \ref{subsect-irregularity-slopes} we recall
the definition of Gevrey series and some essential results, proved
by Z. Mebkhout \cite{Mebkhout}, about the {\em irregularity complex}
$\Irr_Y(\cM)$ of a holonomic $\cD$-module $\cM$ with respect to a
hypersurface $Y$ in a complex manifold. We also recall the
comparison theorem of Y. Laurent and Z. Mebkhout for the {\em
algebraic } and {\em analytic slopes } of a holonomic $\cD$--module
\cite{Laurent-Mebkhout}.

Section \ref{GGZ-GKZ-systems} deals with the basic properties of the
$\Gamma$--series associated with the pair $(A,\beta)$ following
\cite{GGZ}, \cite[Sec. 1]{GZK} and \cite[Sec. 3.4]{SST}.

Sections \ref{case-smooth-monomial-curve} and
\ref{case_monomial_curve} are  devoted to the computation of the
cohomology of the irregularity complex $\Irr_Y^{(s)}(\cM_A(\beta))$
of the hypergeometric system $\cM_A(\beta)$ associated with a smooth
monomial curve and with a general monomial curve respectively, at
any point of the singular support $Y$ of the system. In both cases
the associated semigroup is assumed to be  positive.

Proofs are fulfilled by reducing the number of variables, by using
the restriction functor in $\cD$--module theory, and then applying
the results from the 2 dimensional case
\cite{fernandez-castro-dim2-2008}.

This paper can be considered as a natural continuation of
\cite{fernandez-castro-dim2-2008}, \cite{Castro-Takayama} and
\cite{hartillo_trans} and its results should be related to the ones
of \cite{takayama-modified-2007}. We have often used some essential
results of the book \cite{SST} about solutions of hypergeometric
systems. Some of our results are related to
\cite{oaku-on-regular-2007} and also to \cite{majima} and
\cite{iwasaki}.

The authors would  like to thank Ch. Berkesch,  N. Takayama  and
J.M. Tornero for their useful comments.

\section{Gevrey series.}\label{Gevrey-series}

Let $X$ be a complex manifold of dimension $n\geq 1$, ${\mathcal
O}_X $ (or simply $\cO$) the sheaf of  holomorphic functions on $X$
and ${\mathcal D}_X $ (or simply $\cD$) the sheaf of linear
differential operators with coefficients in $\cO_X$. The sheaf
$\cO_X$ has a natural structure of left $\cD_X$--module. Let  $Z$ be
a hypersurface in $X$ with defining ideal $\mathcal{I}_{Z}$. We
denote by $\cO_{X|Z}$ the restriction to $Z$ of the sheaf $\cO_X$
(and  we will also denote by $\cO_{X|Z}$ its extension by 0 on $X$).
Recall that the formal completion of $\cO_X$ along $Z$ is defined as

$$\cO_{\widehat{X|Z}}:= \lim_{\stackrel{\longleftarrow }{k}} \cO_{X} /
\mathcal{I}_{Z}^k.
$$

By definition $\cO_{\widehat{X|Z}}$ is a sheaf on  $X$ supported on
$Z$ and has a natural structure of left $\cD_X$--module. We will
also denote by $\cO_{\widehat{X|Z}}$ the corresponding sheaf on $Z$.
We denote by $\cQ_Z$ the quotient sheaf defined by the following
exact sequence
$$0\rightarrow \cO_{X|Z} \longrightarrow \cO_{\widehat{X|Z}}
\longrightarrow \cQ_Z \rightarrow 0.$$ The sheaf $\cQ_Z$ has then a
natural structure of left $\cD_X$--module.

\begin{remark} If $X=\CC$ and $Z=\{0\}$ then $\cO_{\widehat{X|Z},0}$ is nothing but
$\CC[[x]]$ the ring of formal power series in one variable $x$,
while $\cO_{\widehat{X|Z},p}=0$ for any nonzero $p\in X$. In this
case $\cQ_{Z,0} = \frac{\CC[[x]]}{\CC\{x\}}$ and $\cQ_{Z,p} = 0$ for
$p\not=0$.
\end{remark}

\begin{definition} Assume $Y\subset X$ is a smooth  hypersurface and that
around a point $p\in X$ the hypersurface $Y$ is locally defined by
$x_n=0$ for some system of local coordinates $(x_1,\ldots,x_n)$. Let
us consider a real number $s\geq 1$. A germ $f=\sum_{i\geq 0} f_i
(x_1 ,\ldots ,x_{n-1} ) x_n^i \in \cO_{\widehat{X|Y},p} $ is said to
be a Gevrey series of order $s$ (along $Y$ at the point $p$) if the
power series
$$\rho_s (f) := \sum_{i\geq 0} \frac{1}{i!^{s-1}} f_i (x_1
,\ldots ,x_{n-1} ) x_n^i $$ is convergent  at $p$.
\end{definition}

The sheaf $\cO_{\widehat{X|Y}}$ admits a natural filtration by the
sub-sheaves $\cO_{\widehat{X|Y}}(s)$ of Gevrey series of order $s$,
$1\leq s\leq  \infty$ where by definition
$\cO_{\widehat{X|Y}}(\infty)= \cO_{\widehat{X|Y}}$. So we have
$\cO_{\widehat{X|Y}}(1) = \cO_{{X|Y}}$. We can also consider the
induced filtration on $\cQ_Y$, i.e. the filtration by the
sub-sheaves $\cQ_{Y} (s)$ defined by the exact sequence:
\begin{align}\label{sucs}
0\rightarrow \cO_{X|Y} \longrightarrow \cO_{\widehat{X|Y}} (s)
\longrightarrow \cQ_Y (s) \rightarrow 0
\end{align}

\begin{definition}\label{def-gevrey-index}
Let $Y$ be a smooth  hypersurface in  $X=\CC^n$ and let $p$ be a
point in $Y$. The Gevrey index of a formal power series $f\in
{\cO_{\widehat{X|Y},p}}$ with respect to $Y$ is the smallest $ 1
\leq s\leq \infty$ such that $f \in {\cO_{\widehat{X|Y}}}(s)_p$.
\end{definition}

\section{Irregularity complex and
slopes}\label{subsect-irregularity-slopes}
Let $X$ be a complex manifold. We recall here the definition of the
irregularity (also called the irregularity complex) of a left
coherent $\cD_X$--module given by Z. Mebkhout \cite[(2.1.2) and page
98]{Mebkhout}.

Recall that if $\cM$ is a coherent left $\cD_X$--module and $\cF$ is
any $\cD_X$--module, the {\em solution complex} of $\cM$ with values
in $\cF$ is by definition the complex $$\RR \cH
om_{\cD_X}(\cM,\cF)$$ which is an object of $D^b(\CC_X)$ the derived
category of bounded complexes of sheaves of $\CC$--vector spaces on
$X$. The cohomology sheaves of the solution complex are then $\cE
xt^i_{\cD_X}(\cM,\cF)$ (or simply $\cE xt^i(\cM,\cF)$) for $i\in
\NN$.

\begin{definition} {\rm \cite[(2.1.2) and page
98]{Mebkhout}} Let $Z$ be a hypersurface in $X$. The irregularity of
$\mathcal{M}$ along $Z$ (denoted by $\operatorname{Irr}_Z(\cM)$) is
the solution complex of $\cM$ with values in $\cQ_Z$, i.e.
$$\operatorname{Irr}_Z (\mathcal{M}):=\RR \cH om_{\cD_{X}}
(\mathcal{M},\cQ_Z )$$
\end{definition}

If $Y$ is a smooth hypersurface in $X$  we  also have  the following
definition (see \cite[D\'{e}f. 6.3.7]{Mebkhout})

\begin{definition}
For each $1\leq s \leq \infty$, the irregularity of order $s$ of
$\cM$ with respect to $Y$ is the complex
$\operatorname{Irr}_{Y}^{(s)} (\cM ):=\R \cH om_{\cD_X }(\cM , \cQ_Y
(s))$.
\end{definition}

\begin{remark} Since $\cO_{\widehat{X|Y}}(\infty)= \cO_{\widehat{X|Y}}$
we have $\operatorname{Irr}_{Y}^{(\infty)} (\cM ) =
\operatorname{Irr}_{Y}^{} (\cM )$.  The support of the irregularity
of $\cM$ along $Z$ (resp. $\operatorname{Irr}_{Y}^{(s)} (\cM )$) is
contained in $Z$ (resp. in $Y$).

If $X=\CC$, $Z=\{0\}$ and $\cM=\cD_X/\cD_X P$ is the $\cD_X$--module
defined by some nonzero linear differential operator
$P(x,\frac{d}{dx})$ with holomorphic coefficients, then
$\Irr_Z(\cM)$ is represented by the complex
$$ 0 \longrightarrow \frac{\CC[[x]]}{\CC\{x\}}
\stackrel{P}{\longrightarrow} \frac{\CC[[x]]}{\CC\{x\}}
\longrightarrow 0$$ where $P$ acts naturally on the quotient
$\frac{\CC[[x]]}{\CC\{x\}}$.
\end{remark}

Z. Mebkhout have proved \cite[Th. 6.3.3]{Mebkhout} that for any
holonomic $\cD_X$--module $\cM$ the complex $\Irr_Y^{(s)}(\cM)$ is a
perverse sheaf on the smooth hypersurface $Y\subset X$ for any
$1\leq s\leq \infty$.


A complex $\cF^\bullet \in D^b(\CC_X)$ of sheaves of vector spaces
is said to be {\em constructible} if there exists a stratification
$(X_\lambda)$ of $X$ such that the cohomology sheaves
$\cH^i(\cF^\bullet)$ are local systems on each $X_\lambda$. A
constructible complex $\cF^\bullet$ satisfies the {\em support
condition} on $X$ if the following conditions hold:
\begin{enumerate} \item $\cH^i(\cF^\bullet)=0$ for $i<0$ and $i > n=dim (X)$.
\item The dimension of the support of $\cH ^i(\cF^\bullet)$ is less than
 or equal to $n-i$ for
$0\leq i\leq n.$
\end{enumerate}

A constructible complex  $\cF^\bullet$ is said to be {\em perverse}
on $X$ (or even a {\em perverse sheaf} on $X$) if both $\cF^\bullet$
and its dual $\RR \cH om_{\CC_X}(\cF^\bullet,\CC_X)$ satisfy the
support condition.

The category $\operatorname{Per}(\CC_X)$ of perverse sheaves on $X$
is an abelian category (see \cite{BBD}).

\begin{remark}
From \cite[Cor. 6.3.5]{Mebkhout} each
$\operatorname{Irr}_{Y}^{(s)}(-) $ for $1\leq s \leq \infty$, is an
exact functor from the category of holonomic $\cD_X$-modules to the
category of  perverse sheaves on $Y$.

Moreover, the sheaves $\operatorname{Irr}_{Y}^{(s)} (\cM )$, $1\leq
s \leq \infty$,  form an increasing filtration of
$\operatorname{Irr}_{Y}^{(\infty )} (\cM )= \operatorname{Irr}_{Y}
(\cM )$. This filtration is called the Gevrey filtration of
$\Irr_Y(\cM)$ (see \cite[Sec. 6]{Mebkhout}).
\end{remark}

Let us denote by
$$\operatorname{Gr}_s(\operatorname{Irr}_Y(\cM)):=
\frac{\Irr^{(s)}_Y(\cM)}{\Irr^{(<s)}_Y(\cM)}$$ for $1\leq s \leq
\infty$ the graded object associated with the Gevrey filtration of
the irregularity $\Irr_Y(\cM)$ (see \cite[Sec.
2.4]{Laurent-Mebkhout}).

We say, with \cite[Sec. 2.4]{Laurent-Mebkhout}, that $1\leq s <
\infty$ is an {\em analytic slope} of $\cM$ along $Y$ at a point
$p\in Y$ if $p$ belongs to the closure of the support of
$\operatorname{Gr}_s(\operatorname{Irr}_Y(\cM))$. Y. Laurent
(\cite{Laurent-ast-85}, \cite{Laurent-ens-87}) defines, in an
algebraic way,  the {\em algebraic slopes} of any coherent
$\cD_X$--module $\cM$ along $Y$. These algebraic slopes can be
algorithmically computed if the module $\cM$ is defined by
differential operators with polynomial coefficients \cite{ACG}.

In \cite[Th. 2.5.3]{Laurent-Mebkhout} Y. Laurent and Z. Mebkhout
prove that for any holonomic $\cD_X$--module  the analytic and the
algebraic slopes, with respect to any smooth hypersurface, coincide
and that they are rational numbers. In \cite{Castro-Takayama} and
\cite{hartillo_trans} are described the slopes at the origin (with
respect to any hyperplane in $\CC^n$) of the hypergeometric system
associated to any affine monomial curve. In \cite{schulze-walther}
M. Schulze and U. Walther describe the slopes of any hypergeometric
system with respect to any coordinate variety in $\CC^n$ under some
assumption on the semigroup associated with the system. By technical
reasons the definition of slope given in \cite{Castro-Takayama} and
\cite{hartillo_trans} is slightly different to the one of Y.
Laurent: a real number $-\infty \leq r \leq 0$ is called a slope in
\cite{Castro-Takayama} and \cite{hartillo_trans} if $\frac{r-1}{r}$
is an algebraic slope in the sense of Y. Laurent
\cite{Laurent-ens-87}.

\section{Hypergeometric systems and $\Gamma$--series}\label{GGZ-GKZ-systems}
Hypergeometric systems are defined on $X=\CC^n$. We denote by
$A_n(\CC)$ or simply $A_n$ the complex Weyl algebra of order $n$,
i.e. the ring of linear differential operators with coefficients in
the polynomial ring $\CC[x]:=\CC[x_1,\ldots,x_n]$. The partial
derivative $\frac{\partial}{\partial x_i}$ will be denoted by
$\partial_i$.

Let $A=(a_{ij})$ be an integer   $d\times n$ matrix with rank $d$
and $\beta\in \CC^d$. Let us denote by $E_i(\beta)$ for $i=1,\ldots,
d$, the operator $E_i(\beta) := \sum_{j=1} ^n a_{ij}x_j\partial_j
-\beta_i$. The toric ideal $I_A \subset
\CC[\partial]:=\CC[\partial_1,\ldots,\partial_n] $ associated with
$A$ is generated by the binomials
$\Box_u:=\partial^{u_+}-\partial^{u_{-}}$ for $u\in \ZZ^n$ such that
$Au=0$ where $u=u_+-u_-$ and  $u_+, u_-$ are both in $ \NN^n$ and
have  disjoint support.

The left ideal $A_n I_A + \sum_i A_n E_i(\beta) \subset A_n$ is
denoted by $H_A(\beta)$ and it will be called the {\em
hypergeometric ideal} associated with $(A,\beta)$. The (global)
hypergeometric module associated with $(A,\beta)$ is by definition
(see \cite{GGZ}, \cite{GZK}) the quotient
$M_A(\beta):=A_n/H_A(\beta)$.

When $X=\CC^n$ is considered as complex manifold, to the pair
$(A,\beta)$ we can also associate the corresponding analytic
hypergeometric $\cD_X$--module, denoted by $\cM_A(\beta)$, which is
the quotient of $\cD_X$ modulo the sheaf of left ideals in $\cD_X$
generated by $H_A(\beta)$.

%
In what follows we will use
$\Gamma$--series following \cite{GGZ} and \cite[Sec. 1]{GZK} and in
the way these objects are handled in \cite[Sec. 3.4]{SST}.

Let the pair $(A,\beta)$ be as before. Assume $v\in \CC^n$. We will
consider the $\Gamma$--series  $$ \varphi_{v} := x^v \sum_{u\in L_A}
\frac{1}{\Gamma(v+u+{\bf 1})}x ^u \in x^v\CC[[x_1^{\pm 1},\ldots,
x_n^{\pm n}]]
$$ where ${\bf 1} =(1,1,\ldots,1)\in \NN^n$, $L_A=\ker_\ZZ(A)$ and for $\gamma=(\gamma_1,\ldots,\gamma_n)\in \CC^n$
one has by definition $\Gamma(\gamma)=\prod_{i=1}^{n}
\Gamma(\gamma_i)$ (where $\Gamma$ is the Euler gamma function).
Notice that the set $x^v\CC[[x_1^{\pm 1},\ldots, x_n^{\pm n}]]$ has
a natural structure of left $A_n(\CC)$-module although it is not a
$\cD_{X,0}$--module. Nevertheless, if  $Av=\beta$ then the
expression $\varphi_v$ formally satisfies the operators defining
$\cM_A(\beta)$. Let us notice that if $u\in L_A$ then $\varphi_v =
\varphi_{v+u}$.

If $v\in (\CC\setminus \ZZ_{<0})^n$ then the coefficient
$\frac{1}{\Gamma(v+u+{\bf 1})}$ is non-zero for all $u\in L_A$ such
that $u_i+v_i\geq 0$ for all $i$ with $v_i\in \NN$. We also have the
following equality
\begin{align}\label{Gamma-factorial} \frac{\Gamma(v+{\bf
1})}{\Gamma(v+u+{\bf 1})} = \frac{(v)_{u_-}}{(v+u)_{u_+}}\end{align}
where for any $z\in \CC^n$ and any $\alpha \in \NN^n$ we have the
convention
$$(z)_\alpha = \prod_{i, \, \alpha_i>0} \prod_{j=0}^{\alpha_i-1} (z_i-j).$$

Following \cite[p. 132-133]{SST} the {\em negative support} of $v$
(denoted by $\nsup(v)$)  is the set of indices $i$ such that $v_i\in
\ZZ_{<0}$. We say that $v$ has {\em minimal negative support} if
there is no $u\in L_A$ such that $\nsup(v+u)$ is a proper subset of
$\nsup(v)$.

Assume  $v\not\in (\CC\setminus \ZZ_{<0})^n$ has minimal negative
support. The negative support of $v$ is then a non-empty set and
$\Gamma(v+{\bf 1})=\infty$. Moreover for each $u\in L_A$ at least
one coordinate of $v+u$ must be strictly negative (otherwise
$\nsup(v+u)=\emptyset \subsetneq  \nsup(v)$). So $\Gamma(v+u+{\bf
1})=\infty$ for all $u\in L_A$ and $\varphi_v=0$.

If $v\not\in (\CC\setminus \ZZ_{<0})^n$ does not have minimal
negative support then there exists ${u}\in L_A$ such that $v+{u}$
has minimal negative support. If $\nsup(v+u)=\emptyset$ then
$\varphi_v=\varphi_{v+u}\not= 0$ while if $\nsup(v+u)\not=\emptyset$
then $\varphi_v=\varphi_{v+u}= 0$.

Following {\it loc. cit.}, for any $v\in \CC^n$ we will consider the
series
$$\phi_v := x^v \sum_{u\in N_v} \coefmu x^u $$ where $N_v=\{u\in
L_A \, \vert \, \nsup(v+u)=\nsup(v)\}$.

If $Av=\beta$ then $\phi_v$ is a solution of the hypergeometric
ideal $H_A(\beta)$ (i.e. $\phi_v$ is formally annihilated by
$H_A(\beta)$)  if and only if $v$ has minimal negative support
\cite[Prop. 3.4.13]{SST}.

For $v\in (\CC\setminus \ZZ_{<0})^n$ we have $$\frac{\Gamma(v+{\bf
1})}{\Gamma(v+u+{\bf 1})}=\coefmu
$$ and $\Gamma(v+{\bf 1})\varphi_v=\phi_v$.

If $v\not\in (\CC\setminus \ZZ_{<0})^n$ then the coefficient of
$x^v$ in  $\phi_v$ is non-zero (in fact this coefficient is 1) while
it is zero in $\varphi_v$.

In order to simplify notations we will adopt in the sequel the
following convention: for $v\in \CC^n$ and $u\in L_A$ we will denote
$$\Gamma[v;u]:=\coefmu$$ if $u\in N_v$ and $\Gamma[v;u]:=0$
otherwise. With this convention we have $$ \phi_v = x^v \sum_{u\in
L_A} \Gamma [v;u] x^u.$$

In what follows  we will describe the irregularity, along the
singular locus, of the hypergeometric system associated with an
affine monomial curve in $X=\CC^n$ and with a parameter $\beta \in
\CC$.

\section{The case of a smooth monomial
curve}\label{case-smooth-monomial-curve}
Let $A=(1\, a_2 \, \cdots \, a_n)$ be an integer row matrix with $1
< a_2 < \cdots < a_n$ and $\beta \in \CC$. Let us denote by
$\cM_A(\beta)$ the corresponding analytic hypergeometric system on
$X=\CC^n$. We will simply denote $\cD$ the sheaf $\cD_X$ of linear
differential operators with holomorphic coefficients.

Although it can be deduced from general results (see \cite{GGZ} and
\cite[Th. 3.9]{Adolphson}), a direct computation shows in this case
that the characteristic variety of $\cM_A(\beta)$ equals $T^*_X X
\cup T^*_Y X$ where $Y=(x_n =0)$. The module $\cM_A(\beta)$ is then
holonomic and  its singular support is $Y$. Let us denote by
$Z\subset \CC^n$ the hyperplane $x_{n-1}=0$.

One of the  main results in this Section is
\begin{theorem}\label{teorext}
Let $A=(1 \; a_2 \; \cdots \; a_n )$ be an integer row matrix with
$1<a_2 <\cdots <a_n$ and $\beta\in \C$. Then the cohomology sheaves
of $\operatorname{Irr}^{(s)}_Y (\HHAb)$ satisfy:
\begin{enumerate}
\item[i)] $\mathcal{E}xt^0_{\cD}
(\HHAb , \cQ_Y (s))=0 $ for  $1\leq s < a_n / a_{n-1}$.
\item[ii)] $\mathcal{E}xt^0_{\cD} (\HHAb , \cQ_Y (s))_{|Y\cap Z}=0 $ for $1\leq s\leq \infty$.
\item[iii)] $\dim_{\CC}\left(\mathcal{E}xt^0_{\cD} (\HHAb ,\cQ_Y (s))_p \right)=a_{n-1}$ for all $s\geq a_n /a_{n-1}$ and
$p\in Y\setminus Z$. \item[iv)] $\mathcal{E}xt^i_{\cD} (\HHAb ,
\cQ_Y (s))=0 $ for $i \geq 1$ and $1\leq s\leq \infty$.
\end{enumerate}
\end{theorem}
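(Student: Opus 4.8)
The plan is to reduce the $n$-dimensional computation to the $2$-dimensional case treated in \cite{fernandez-castro-dim2-2008} by means of the $\cD$-module restriction functor, exploiting that the matrix $A=(1\;a_2\;\cdots\;a_n)$ has a unit in its first entry. First I would observe that, because $a_1=1$, one can use the Euler operator $E_1(\beta)=x_1\partial_1+a_2x_2\partial_2+\cdots+a_nx_n\partial_n-\beta$ to eliminate the variable $x_1$: successively restricting $\cM_A(\beta)$ to the hyperplanes $x_1=\lambda_1$ and then to $x_2=\lambda_2,\ldots,x_{n-2}=\lambda_{n-2}$ for generic constants should yield a hypergeometric-type $\cD$-module in the two variables $x_{n-1},x_n$ associated with the $1\times 2$ matrix $(a_{n-1}\;a_n)$ (up to the scaling forced by $\gcd$), whose irregularity along $x_n=0$ is exactly what \cite{fernandez-castro-dim2-2008} computes. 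The key technical input is the compatibility of $\Irr^{(s)}_Y(-)$ with restriction to a transverse smooth hypersurface: since $\Irr^{(s)}_Y$ is an exact functor into perverse sheaves (Remark following \cite[Cor. 6.3.5]{Mebkhout}) and restriction of a perverse sheaf to a generic transverse slice is again perverse, the cohomology of $\Irr^{(s)}_Y(\cM_A(\beta))$ at a point $p\in Y\setminus Z$ is computed, fibrewise in the $x_1,\ldots,x_{n-2}$ directions, by the $2$-dimensional answer.

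Next I would make the $\Gamma$-series bookkeeping explicit so that the restriction is controlled rather than merely generic. Using the description $\phi_v=x^v\sum_{u\in L_A}\Gamma[v;u]x^u$ with $L_A=\ker_\ZZ(A)$, and the fact that for the smooth curve $L_A$ is spanned by the vectors $u^{(j)}=a_j e_1-e_j$ for $j=2,\ldots,n$ (so every $u\in L_A$ is determined by its coordinates $u_2,\ldots,u_n$ with $u_1=-\sum_{j\ge 2}a_ju_j$), I would identify which $v$ with $Av=\beta$ and minimal negative support give the $a_n$ independent formal solutions, and sort them by the Gevrey index along $Y$. The Gevrey order of $\phi_v$ along $x_n=0$ is governed by the growth of the coefficients in $x_n$, which by the Gamma-quotient formula \eqref{Gamma-factorial} behaves like a ratio of factorials with combinatorial weight $a_n$ versus the other $a_j$; one expects that $a_{n-1}$ of these solutions have Gevrey index exactly $a_n/a_{n-1}$ and the remaining ones are convergent or of strictly smaller index, which is precisely the jump predicted by parts (i) and (iii). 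Parts (i), (ii), (iii) then follow by combining this count of Gevrey series solutions with perversity: (i) says no divergent Gevrey-$s$ solution appears below the slope $a_n/a_{n-1}$; (ii) says along the codimension-two stratum $Y\cap Z$ (where $x_{n-1}=x_n=0$) the relevant solutions all become convergent — this is exactly the statement that the $2$-dimensional module restricted further to $x_{n-1}=0$ has vanishing $\cE xt^0$ into $\cQ$, again from \cite{fernandez-castro-dim2-2008}; and (iii) pins down the dimension $a_{n-1}$ on the open stratum $Y\setminus Z$.

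For part (iv), the vanishing of the higher $\cE xt^i$, I would argue by perversity and dimension: $\Irr^{(s)}_Y(\cM_A(\beta))$ is a perverse sheaf supported on $Y\cong\CC^{n-1}$, and its cohomology sheaves $\cH^i$ are supported on $Y\setminus Z$ (dimension $n-1$) for $i=0$ and must satisfy the support condition $\dim\supp\cH^i\le (n-1)-i$. Combined with the fact that on the open dense stratum $Y\setminus Z$ the complex is concentrated in degree $0$ (a local system of rank $a_{n-1}$, by the previous paragraph and the $2$-dimensional computation, which gives concentration in degree $0$ there), one gets $\cH^i=0$ for $i\ge 1$ away from $Y\cap Z$; and the restriction-to-generic-slice argument shows the stalks at points of $Y\cap Z$ also vanish in positive degree because the $2$-dimensional irregularity complex is concentrated in degree $0$. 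The main obstacle I anticipate is making the reduction to two variables genuinely rigorous: one must verify that restriction of $\cM_A(\beta)$ to the intermediate coordinate hyperplanes is non-characteristic (so the derived restriction is concentrated in degree $0$ and stays holonomic), identify the restricted module precisely with the $2$-dimensional hypergeometric module for $(a_{n-1}\;a_n)$ with the correctly transformed parameter, and check that the Gevrey filtration commutes with this restriction — i.e. that $\Irr^{(s)}_Y$ of the restriction equals the restriction of $\Irr^{(s)}_Y$ — which is where the perverse-sheaf formalism of \cite{Mebkhout} and \cite{Laurent-Mebkhout} does the real work.
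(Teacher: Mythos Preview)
Your overall strategy matches the paper's: reduce to the two-variable case via $\cD$-module restriction, invoke the Cauchy--Kovalevskaya theorem for Gevrey series \cite[Cor.~2.2.4]{Laurent-Mebkhout2} to pass the Gevrey filtration through the restriction, and use the perversity of $\Irr_Y^{(s)}$ \cite[Th.~6.3.3]{Mebkhout}. Two points, however, diverge from the paper.

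First, the paper restricts to the \emph{specific} coordinate $2$-plane $X'=(x_1=\cdots=x_{n-2}=0)$, not to generic slices $x_i=\lambda_i$. This restriction is identified precisely (Corollary~\ref{sumadirectan}): it is a direct sum $\bigoplus_{i=0}^{k-1}\cM_{A'}(\beta_i)$ with $A'=\tfrac{1}{k}(a_{n-1}\;a_n)$ and $k=\gcd(a_{n-1},a_n)$, so the two-dimensional results of \cite{fernandez-castro-dim2-2008} give the stalks of every $\cE xt^j(\cM_A(\beta),\cQ_Y(s))$ at points $(0,\ldots,0,\epsilon_{n-1},0)$ (Proposition~\ref{ExtAn}). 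The $\Gamma$-series bookkeeping in your second paragraph plays no role in this proof; it is used only later (Theorems~\ref{ext0formal} and~\ref{basis_of_ext_i_Q_s}) to exhibit explicit bases once the dimensions are already known.

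Second---and this is the genuine gap---you do not explain why knowing the stalks along $X'\cap Y$ determines them on all of $Y\setminus Z$ and $Y\cap Z$. Your line ``restriction of a perverse sheaf to a generic transverse slice is again perverse'' does not yield local constancy of the cohomology sheaves on the strata; perversity constrains supports but does not force stalks to be constant along a stratum. The paper closes this gap with a characteristic-cycle argument (Corollary~\ref{corciclo}): from $\operatorname{Ch}^{(s)}(\cM_A(\beta))\subset T_X^*X\cup T_Y^*X\cup T_Z^*X$ for $s\ge a_n/a_{n-1}$ together with \cite[Prop.~2.4.1]{Laurent-Mebkhout} one obtains $\operatorname{Ch}(\Irr_Y^{(s)}(\cM_A(\beta)))\subset T_Y^*Y\cup T_{Z\cap Y}^*Y$, and then Kashiwara's constructibility theorem and the Riemann--Hilbert correspondence force each $\cE xt^i(\cM_A(\beta),\cQ_Y(s))$ to be locally constant on $Y\setminus Z$ and on $Y\cap Z$ separately. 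Only with this in hand does computing at one point of each stratum suffice for (i)--(iii), and the same mechanism handles (iv): your support-condition argument bounds $\dim\operatorname{supp}\cH^i$ but cannot by itself force vanishing---you still need the stalks at representative points of both strata to vanish, which again rests on local constancy plus Proposition~\ref{ExtAn}. You correctly flag at the end that ``making the reduction genuinely rigorous'' is the crux; the missing ingredient is precisely this microcharacteristic-variety input.
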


The main ingredients  in the proof  of Theorem \ref{teorext} are:
Corollary \ref{sumadirectan}, the corresponding results for the case
of monomial plane curves \cite{fernandez-castro-dim2-2008},
Cauchy-Kovalevskaya Theorem for Gevrey series (see \cite[Cor.
2.2.4]{Laurent-Mebkhout2}), the perversity of
$\Irr_Y^{(s)}(\cM_A(\beta))$ \cite[Th. 6.3.3]{Mebkhout} and
Kashiwara's constructibility theorem \cite{kashiwara-overdet-75}.

We will also describe a basis of the solution vector space in part
{\em iii)} of Theorem \ref{teorext} (see Theorem
\ref{basis_of_ext_i_Q_s}).

\subsection{Reduction of the number of variables by restriction}

In the sequel we will use some  results concerning restriction of
hypergeometric systems.

\begin{theorem}{\rm  \cite[Th. 4.4]{Castro-Takayama}}\label{isocastro-takayama}
Let  $A=(1 \; a_2 \; \cdots \; a_n )$ be an integer row matrix with
$1<a_2 <\cdots <a_n$ and $\beta\in \C$. Then for $i=2,\ldots ,n$,
the restriction of $\HHAb$ to  $(x_i = 0)$ is isomorphic to the
$\cD'$-module
$$\cM_A(\beta)_{|(x_i=0)}:=\frac{\cD}{\cD \HAb + x_i \cD} \cong  \frac{\cD '}{\cD ' H_{A'}(\beta
)}$$ where  $A' =(1 \; a_2 \; \cdots \; a_{i-1} \; a_{i+1} \; \cdots
\; a_n )$ and $\cD '$ is the sheaf of linear differential operators
with holomorphic coefficients on $\CC^{n-1}$ (with coordinates $x_1
,\dots , x_{i-1} , x_{i+1} ,\dots ,x_n $).
\end{theorem}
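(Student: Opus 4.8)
The plan is to verify that the restriction of the $\cD$-module $\cM_A(\beta)$ to the coordinate hyperplane $(x_i=0)$ is again a hypergeometric module, for the matrix $A'$ obtained from $A$ by deleting the $i$-th column. Since this statement is quoted from \cite[Th. 4.4]{Castro-Takayama}, the argument really consists in reproducing (or appealing to) the computation there; I sketch how I would carry it out from scratch. First I would recall the definition of the (non-derived) restriction functor: for a left $\cD$-module $\cN$ on $X=\CC^n$ and the smooth hypersurface $X_i=(x_i=0)$, one has $\cN_{|(x_i=0)} = \cD_{X_i\to X}\otimes_{\cD_X}^{\mathbf L}\cN$, whose zeroth cohomology is $\cN/x_i\cN$ viewed as a $\cD'$-module via the action of $x_1,\dots,x_{i-1},x_{i+1},\dots,x_n$ and their derivatives. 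Concretely, for $\cN=\cM_A(\beta)=\cD/H_A(\beta)$ this zeroth restriction is $\cD/(\cD H_A(\beta)+x_i\cD)$, which is the module appearing in the statement; so the content is the isomorphism $\cD/(\cD H_A(\beta)+x_i\cD)\cong \cD'/\cD' H_{A'}(\beta)$, together with the (implicit) vanishing of the higher derived restriction that makes ``the restriction'' well defined as a $\cD'$-module.

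The key algebraic step is to analyze how the generators of $H_A(\beta)$ behave modulo $x_i\cD$. Write $A=(1\ a_2\ \cdots\ a_n)$ with the entry $a_i$ in position $i$ (here $a_1=1$). The Euler operators are $E_1(\beta)=\sum_{j=1}^n a_j x_j\partial_j-\beta$; reducing modulo $x_i\cD$ kills the term $a_i x_i\partial_i$, leaving exactly the Euler operator $E_1'(\beta)=\sum_{j\neq i} a_j x_j\partial_j-\beta$ attached to $A'$. For the toric part one must check that $I_A$ maps, modulo $x_i$ and the commutation relations, onto $I_{A'}$: a box operator $\Box_u=\partial^{u_+}-\partial^{u_-}$ with $Au=0$ reduces, when $u_i\neq 0$, using the Euler operator and the relation $x_i\partial_i\equiv 0$ in the quotient (more precisely $\partial_i x_i\equiv 1$), to a box operator for a kernel vector of $A'$ after eliminating the variable $x_i$; when $u_i=0$ it is already a box operator for $A'$ since $L_{A'}=L_A\cap\{u_i=0\}$ is generated together with the reductions of the remaining vectors. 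I would make this precise by choosing a convenient generating set of $L_A$ adapted to the $i$-th coordinate — for a monomial curve the kernel lattice is explicit — and tracking the images; the positivity hypothesis $1<a_2<\cdots<a_n$ guarantees that $A'$ still defines an affine monomial curve with distinct positive exponents so that $H_{A'}(\beta)$ makes sense.

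The main obstacle, and the place where real work is needed, is the \emph{vanishing of the higher (derived) restriction}, i.e. that $\cM_A(\beta)$ is non-characteristic, or at least well enough behaved, along $(x_i=0)$ so that the restriction is concentrated in degree zero and the degree-zero piece is exactly $\cM_{A'}(\beta)$ with no correction terms. For $i=n$ this is essentially the transversality of $Y=(x_n=0)$ to the characteristic variety recalled just above (the characteristic variety is $T^*_XX\cup T^*_YX$), but for $i<n$ the hyperplane $(x_i=0)$ need not be non-characteristic and one must instead compute the Koszul-type complex $x_i:\cM_A(\beta)\to\cM_A(\beta)$ directly and show $x_i$ acts injectively on $\cM_A(\beta)$, equivalently that $\cM_A(\beta)$ has no $\cO$-torsion supported on $(x_i=0)$. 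This injectivity can be extracted from the $b$-function of $\cM_A(\beta)$ along $(x_i=0)$, or from the explicit description of $\cM_A(\beta)$ via the combinatorics of $I_A$, and is precisely the technical heart of \cite[Th. 4.4]{Castro-Takayama}; I would cite that computation rather than redo it, and then conclude by assembling the toric part and the Euler operator part into the stated presentation of $\cM_{A'}(\beta)$ over $\cD'$.
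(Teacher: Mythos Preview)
The paper itself gives no proof of this theorem: it is stated with the citation \cite[Th.~4.4]{Castro-Takayama} and used as an input. So there is nothing to compare against in the paper proper, but one can compare your sketch with the method that \cite{Castro-Takayama} actually uses and that the present paper reproduces for the companion results (Theorems~\ref{sumadirecta} and~\ref{restriccioninversa}): namely the Oaku--Takayama restriction algorithm \cite[Alg.~5.2.8]{SST} via the $b$-function with respect to $\omega=e_i$. The paper even records the key fact in the remark after Corollary~\ref{bfuncion}: for $A=(1\;a_2\;\cdots\;a_n)$ the $b$-function of $H_A(\beta)$ along $e_i$ is $b(\tau)=\tau$ for $i=2,\ldots,n$. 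Because the only integer root is $0$, the restriction is cyclic over $\cD'$, generated by $\partial_i^0=1$, and the relations are obtained by setting $x_i=0$ in the $\omega$-degree-$0$ part of a Gr\"obner basis of $H_A(\beta)$; this yields exactly $E'(\beta)$ together with a generating set of $I_{A'}$, hence $\cM_{A'}(\beta)$.

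Your sketch has the right ingredients but places the emphasis in the wrong spot. You treat the ``main obstacle'' as the vanishing of the higher derived restriction (injectivity of $x_i$). That is a side issue; the statement as written only identifies the zeroth restriction $\cD/(\cD H_A(\beta)+x_i\cD)$ with $\cM_{A'}(\beta)$. The real work is showing that this quotient is \emph{cyclic} as a $\cD'$-module. A priori $\cD/x_i\cD\cong\bigoplus_{k\geq 0}\cD'\,\partial_i^k$ as a left $\cD'$-module, so one must argue that in the quotient all $\partial_i^k$ with $k\geq 1$ lie in $\cD'\cdot 1$; this is exactly what $b(\tau)=\tau$ provides via the restriction algorithm. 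Your proposed reduction of the toric relations (``$\Box_u$ with $u_i\neq 0$ reduces, using $\partial_i x_i\equiv 1$, to a box operator for $A'$'') is too vague to carry this weight: the relation $\partial_i x_i\equiv 1$ does not by itself eliminate positive powers of $\partial_i$ from the module, and the box operators with $u_i\neq 0$ do not in general reduce to box operators for $A'$ in any direct way. If you want an argument that avoids the $b$-function machinery, you would need to exhibit explicitly, for each $k\geq 1$, an element of $\cD H_A(\beta)+x_i\cD$ whose class is $\partial_i^k$ modulo $\cD'\cdot 1$; otherwise, simply invoke the $b$-function computation from \cite{Castro-Takayama} as the paper does.
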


\begin{theorem}\label{sumadirecta}
Let  $A=(1\; k a \; k b)$ be an integer row matrix with  $1\leq a <
b$, $1  < ka < kb$  and $a,b$ relatively prime. Then for all  $\beta
\in \CC$ there exist $\beta_0 ,\ldots ,\beta_{k-1}\in \CC$ such that
the restriction of $\HHAb$ to  $(x_1 = 0)$ is isomorphic to the
$\cD'$-module
$$\cM_A(\beta)_{|(x_1=0)}:=\frac{\cD}{\cD \HAb + x_1 \cD} \simeq \bigoplus_{i=0}^{k-1}
\mathcal{M}_{A'}(\beta_i )$$ where $\cD'$ is the sheaf of linear
differential operators on the plane $(x_1=0)$ and $A' = (a \; b)$.
Moreover, for all but finitely many $\beta\in \CC $ we can take
$\beta_i = \frac{\beta -i}{k} $, $i=0,1,\ldots ,k-1$.
\end{theorem}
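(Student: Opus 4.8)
The plan is to compute the restriction $\cM_A(\beta)_{|(x_1=0)}$ directly from generators and relations, and then to diagonalize the resulting $\cD'$-module using the arithmetic of the semigroup generated by $ka$ and $kb$. First I would apply the restriction functor to the presentation $\cM_A(\beta)=\cD/(\cD H_A(\beta))$: by definition $\cM_A(\beta)_{|(x_1=0)}=\cD/(\cD H_A(\beta)+x_1\cD)$, which as a $\cD'$-module is the quotient of $\cD'=\cD/x_1\cD$ (viewed via $x_1\mapsto 0$, $\partial_1\mapsto 0$ after eliminating $x_1\partial_1$) by the images of the operators generating $H_A(\beta)$. Here $A=(1\ ka\ kb)$, so $L_A=\ker_\Z(A)$ is generated by $(ka,-1,0)$ and $(kb,0,-1)$ (or equivalently by $(ka,-1,0)$ and $(k(b-a),1,-1)$), and $E_1(\beta)=x_1\partial_1+ka\,x_2\partial_2+kb\,x_3\partial_3-\beta$. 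Setting $x_1=0$ kills $x_1\partial_1$, so the Euler operator becomes $ka\,x_2\partial_2+kb\,x_3\partial_3-\beta$; the toric binomials $\partial_1^{ka}-\partial_2$ and $\partial_1^{kb}-\partial_3$ become $-\partial_2$ and $-\partial_3$ upon setting $\partial_1=0$, which would be too strong, so the correct reduction must keep track of what $\cD/(\cD H_A(\beta)+x_1\cD)$ really is — I would instead use the binomial $\partial_2^{b}-\partial_3^{a}$ (which lies in $I_A$ since $b(ka)=a(kb)$) together with relations obtained by dividing out the $\partial_1$-content. Concretely, after restriction the module is presented on $\CC^2$ with coordinates $(x_2,x_3)$ by the operators $\partial_2^b-\partial_3^a$ and $ka\,x_2\partial_2+kb\,x_3\partial_3-\beta$, i.e. it is (up to the factor $k$ in the Euler operator) a hypergeometric-type module for $A'=(a\ b)$ but with the ``wrong'' Euler scaling.

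The second step is to resolve the scaling by $k$. The operator $ka\,x_2\partial_2+kb\,x_3\partial_3-\beta = k(a\,x_2\partial_2+b\,x_3\partial_3) - \beta$, so on the common eigenspaces of $a\,x_2\partial_2+b\,x_3\partial_3$ the equation $k\lambda-\beta=0$ forces $\lambda=\beta/k$; but the \emph{toric} binomial $\partial_2^b-\partial_3^a$ only relates monomials whose $(a,b)$-degree differs by $ab$, while the full monomial lattice is filtered by the $(a,b)$-degree modulo $\gcd(a,b)\cdot(\text{something})$ — here, because $a,b$ are coprime, the relevant invariant is the $(a,b)$-degree itself, and the binomial relation $\partial_2^b=\partial_3^a$ preserves it. The point is that $\CC[\partial_2,\partial_3]/(\partial_2^b-\partial_3^a)$ decomposes, as a module over the Euler operator $a\,x_2\partial_2+b\,x_3\partial_3$, into pieces indexed by the residue of the degree mod $k$ once we impose the eigenvalue condition coming from $k(a\,x_2\partial_2+b\,x_3\partial_3)=\beta$: writing $\theta:=a\,x_2\partial_2+b\,x_3\partial_3$, a solution/section with $k\theta=\beta$ has $\theta=\beta/k+$ (integer multiple of … ) — more precisely the monomials appearing split according to $\theta \bmod 1$ after the substitution, and this is exactly the splitting $\beta_i=(\beta-i)/k$ for $i=0,\dots,k-1$. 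So I would show $\cM_A(\beta)_{|(x_1=0)}\cong\bigoplus_{i=0}^{k-1}\cD'/(\cD' H_{A'}(\beta_i))=\bigoplus_{i=0}^{k-1}\cM_{A'}(\beta_i)$ by constructing the idempotents explicitly (for instance via the operator $x_2\partial_2$ whose eigenvalues mod $k$ separate the summands), checking they are $\cD'$-linear, and verifying that each summand is the asserted hypergeometric module; the ``finitely many exceptional $\beta$'' clause is where one must be careful — the clean formula $\beta_i=(\beta-i)/k$ fails precisely at the resonant/exceptional parameters where the hypergeometric modules $\cM_{A'}(\beta_i)$ are not mutually non-isomorphic or where rank jumps occur, so there the $\beta_i$ may need to be replaced by a permuted or shifted set, but the direct-sum decomposition into \emph{some} such modules persists by semicontinuity of the restriction.

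The main obstacle I anticipate is the rigorous bookkeeping of the $\partial_1$-elimination: passing from $\cD/(\cD H_A(\beta)+x_1\cD)$ to an honest presentation on $\CC^2$ requires knowing a Gröbner-type basis of $H_A(\beta)$ adapted to the restriction (so that one can read off which operators survive and which new operators of the form $x_1\cdot(\cdots)\in H_A(\beta)$ contribute), and then identifying the cokernel precisely. This is the step where Theorem \ref{isocastro-takayama}-style techniques (restriction of GKZ systems, as in \cite[Th. 4.4]{Castro-Takayama} and \cite[Ch. 5]{SST}) do the real work, and where the hypothesis $1<ka<kb$ (ensuring the curve is genuinely non-smooth in the first variable, so the restriction is not just a smaller smooth-curve system) and coprimality of $a,b$ (ensuring $\ZZ A' = \ZZ$ and hence a clean volume/rank count $\mathrm{vol}(\Delta')=b$, matched on the left by $\mathrm{vol}(\Delta)/[\,\cdots\,]$) get used. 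Once the presentation is pinned down, the diagonalization by eigenvalues of $x_2\partial_2 \bmod k$ is routine linear algebra over $\cD'$, and the identification of each block with $\cM_{A'}(\beta_i)$ follows by matching generators.
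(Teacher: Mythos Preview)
Your central intermediate claim --- that ``after restriction the module is presented on $\CC^2$ with coordinates $(x_2,x_3)$ by the operators $\partial_2^b-\partial_3^a$ and $ka\,x_2\partial_2+kb\,x_3\partial_3-\beta$'' --- is false, and this derails the rest of the argument. The cyclic $\cD'$-module presented by those two operators is just $\cM_{A'}(\beta/k)$ (since $k\theta-\beta$ and $\theta-\beta/k$ generate the same left ideal), which has holonomic rank $b$. But the actual restriction $\cD/(\cD H_A(\beta)+x_1\cD)$ has holonomic rank $kb$, so it cannot be presented this way. The missing structure is the action of $\partial_1$: in the quotient $\cD/(x_1\cD+\cD H_A(\beta))$ the images of $1,\partial_1,\dots,\partial_1^{k-1}$ are $\cD'$-linearly independent, and the restriction is \emph{not} cyclic over $\cD'$. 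Your subsequent attempt to split using ``eigenvalues of $x_2\partial_2$ mod $k$'' is trying to decompose a module that is already too small; the $k$ summands come from the $\partial_1^i$, not from any grading on the two-variable side.

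What the paper does is exactly the Gr\"obner/$b$-function bookkeeping you flag as the obstacle, carried out via the Oaku--Takayama restriction algorithm \cite[Alg.~5.2.8]{SST}. One first shows (Lemma~\ref{inwwHAb}, Corollary~\ref{bfuncion}, Lemma~\ref{BGdeHAb}) that for generic $\beta$ the $b$-function of $H_A(\beta)$ with respect to $\omega=(1,0,0)$ is $b(\tau)=\tau(\tau-1)\cdots(\tau-k+1)$ and that a Gr\"obner basis with respect to $\omega$ contains an element $R$ with $\inww(R)=\partial_1^k$. The algorithm then realizes the restriction inside the free $\cD'$-module $\bigoplus_{i=0}^{k-1}\cD'\partial_1^i$; substituting $x_1=0$ into $\partial_1^i E$ and $\partial_1^i P_1$ gives $(ka\,x_2\partial_2+kb\,x_3\partial_3-\beta+i)\partial_1^i$ and $(\partial_2^b-\partial_3^a)\partial_1^i$, so the $i$-th block is $\cM_{A'}((\beta-i)/k)$. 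The shift by $i$ in the Euler parameter arises from the commutation $\partial_1^i(x_1\partial_1)=(x_1\partial_1+i)\partial_1^i$, not from any mod-$k$ grading on $x_2\partial_2$. The finitely many exceptional $\beta$ are then handled by the isomorphism-class result of Proposition~\ref{M-a-beta-isom}, not by a semicontinuity argument.
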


An ingredient in the proof of Theorem \ref{teorext} is the following

\begin{corollary}\label{sumadirectan}
Let  $A=(1 \; a_2 \; \cdots \;  a_{n})$ be an integer row matrix
with  $1<a_2 < \cdots <a_n$ and $\beta \in \CC$. Then there exist
$\beta_i\in \CC$, $i=0,\ldots,k-1$ such that the restriction of
$\HHAb$ to $X'=( x_1=x_2=\cdots = x_{n-2} = 0)$ is isomorphic to the
$\cD'$-module
$$\cM_A(\beta)_{|X'}:=\frac{\cD}{\cD \HAb + ( x_1, x_2,\cdots ,x_{n-2} )\cD}
\simeq \bigoplus_{i=0}^{k-1} \mathcal{M}_{A'}(\beta_i )$$ where
$\cD'$ is the sheaf of linear differential operators on $X'$, $A'
= \frac{1}{k}(a_{n-1} \; a_n )$ and $k=\operatorname{gcd}(a_{n-1}
,a_n )$. Moreover, for all but finitely many $\beta\in \CC $ we
can take $\beta_i =\frac{\beta -i}{k}$, $i=0,1,\ldots ,k-1$.
\end{corollary}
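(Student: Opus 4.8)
The strategy is to reduce Corollary~\ref{sumadirectan} to Theorem~\ref{sumadirecta} by performing the restriction to $X'=(x_1=x_2=\cdots=x_{n-2}=0)$ one coordinate hyperplane at a time, starting from the ``outermost'' variables. Concretely, I would first restrict $\cM_A(\beta)$ to $(x_2=0)$. By Theorem~\ref{isocastro-takayama} (applied with $i=2$), this restriction is isomorphic to $\mathcal{M}_{A_2}(\beta)$ where $A_2=(1\; a_3\;\cdots\; a_n)$, which is again a hypergeometric system attached to a monomial curve with leading entry $1$. One then iterates: restricting $\mathcal{M}_{A_2}(\beta)$ to $(x_3=0)$ gives $\mathcal{M}_{A_3}(\beta)$ with $A_3=(1\; a_4\;\cdots\;a_n)$, and so on. After $n-3$ such steps we arrive at $\mathcal{M}_{A_{n-2}}(\beta)$ with $A_{n-2}=(1\; a_{n-1}\; a_n)$, a system on the three-dimensional space with coordinates $x_{n-2},x_{n-1},x_n$. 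The key point allowing the iteration is that at each stage the leading entry of the matrix stays equal to $1$, so Theorem~\ref{isocastro-takayama} applies verbatim; I would justify the compatibility of iterated restrictions with the single restriction to $X'$ using the transitivity of the restriction functor for $\cD$-modules (restriction to $x_2=0$ followed by restriction to $x_3=0$ equals restriction to $(x_2=x_3=0)$, since these are transverse smooth hypersurfaces).

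For the last step I would restrict $\mathcal{M}_{A_{n-2}}(\beta)$, where $A_{n-2}=(1\; a_{n-1}\; a_n)$, to $(x_{n-2}=0)$. Writing $k=\gcd(a_{n-1},a_n)$ and $a_{n-1}=ka$, $a_n=kb$ with $\gcd(a,b)=1$ and $1\le a<b$ (note $1<a_{n-1}<a_n$ forces $1<ka<kb$), Theorem~\ref{sumadirecta} applies directly and yields
$$\mathcal{M}_{A_{n-2}}(\beta)_{|(x_{n-2}=0)}\simeq\bigoplus_{i=0}^{k-1}\mathcal{M}_{A'}(\beta_i),$$
with $A'=(a\;b)=\frac{1}{k}(a_{n-1}\;a_n)$ and, for all but finitely many $\beta$, $\beta_i=\frac{\beta-i}{k}$. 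Composing the iterated restrictions with transitivity gives exactly the asserted isomorphism
$$\cM_A(\beta)_{|X'}\simeq\bigoplus_{i=0}^{k-1}\mathcal{M}_{A'}(\beta_i),$$
and carries along the description of the $\beta_i$.

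The only genuinely delicate point is making precise the transitivity statement ``$(\cM_{|(x_2=0)})_{|(x_3=0)}\cong\cM_{|(x_2=x_3=0)}$'' in the form used here, i.e. with the explicit presentation $\cD/(\cD\HAb+(x_1,\ldots,x_{n-2})\cD)$ on the left-hand side. This is standard $\cD$-module theory (the restriction functor $i^\ast$ for a closed embedding can be computed in stages along a flag of transverse smooth hypersurfaces, and for a presented module $\cD/\cD I$ restriction to $(x_j=0)$ is $\cD/(\cD I + x_j\cD)$ after identifying $\cD/(\cD+x_j\cD+\cD\partial_j)$-type quotients), but one must check that no spurious lower cohomology of the restriction complex appears so that the naive quotient really is the restriction. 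In our situation this follows because each intermediate $\mathcal{M}_{A_m}(\beta)$ is holonomic with singular support contained in $(x_n=0)$ and is $\cD$-flat along the transverse hyperplanes $(x_j=0)$ for $j\le n-2$ (equivalently, these hyperplanes are non-characteristic for the relevant part of the characteristic variety), so $x_j$ acts injectively and the restriction is concentrated in degree zero. Granting this, the corollary is immediate. I would present the argument as: (1) iterate Theorem~\ref{isocastro-takayama} to reduce from $A$ to $(1\;a_{n-1}\;a_n)$; (2) invoke transitivity of restriction; (3) apply Theorem~\ref{sumadirecta} for the final hyperplane; (4) read off $A'$, $k$ and the $\beta_i$.
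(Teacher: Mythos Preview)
Your approach is exactly the one the paper intends: the corollary is stated without proof immediately after Theorems~\ref{isocastro-takayama} and~\ref{sumadirecta}, and the implicit argument is precisely to iterate Theorem~\ref{isocastro-takayama} (removing one middle coordinate at a time) until one reaches the three-variable matrix $(1\;a_{n-1}\;a_n)$, and then apply Theorem~\ref{sumadirecta} for the last step. Your remarks on transitivity of restriction and the non-characteristic condition are the right justification; in the paper this is covered by the reference to \cite[Prop.~4.2]{Castro-Takayama} (see Remark~\ref{CK-gevrey}).

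One small bookkeeping slip: after restricting successively to $(x_2=0),\ldots,(x_{n-2}=0)$ the three surviving coordinates are $x_1,x_{n-1},x_n$, not $x_{n-2},x_{n-1},x_n$; the final restriction (the one to which Theorem~\ref{sumadirecta} is applied) is to $(x_1=0)$, not to $(x_{n-2}=0)$. This does not affect the argument, only the labels.
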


Let us fix some notations and state some preliminaries results in
order to prove Theorem \ref{sumadirecta}.

\begin{notation}\label{notations_SST}  Let $A$ be an  integer $d\times n$--matrix of rank $d$
and $\beta \in \CC^n$. For any weight vector $\omega\in \RR^n$ and
any ideal  $J\subset
\CC[\partial]=\CC[\partial_1,\ldots,\partial_n]$ we denote by $\inw
(J)$ the initial ideal of $J$ with respect to the graduation on
$\CC[\partial]$ induced by $w$. According to \cite[p. 106]{SST} the
{\em fake initial ideal} of $H_A(\beta)$ is the ideal $\finw = A_n
\inw (I_A) + A_n (A\theta-\beta)$ where
$\theta=(\theta_1,\ldots,\theta_n)$ and $\theta_i = x_i
\partial_i$.

Assume now $A=(1\, \, ka\,\, kb)$ is an integer row matrix with
$1\leq a < b$, $1  < ka < kb$  and $a,b$ relatively prime.

Let us write $P_1 =
\partial_2^b -\partial_3^a$, $P_2 =\partial_1^{ka} -\partial_2$,
$P_3 =\partial_1^{kb}-\partial_3$ and  $E=\theta_1 + k a \theta_2 +
k b \theta_3 -\beta$. It is clear that  $P_1 \in \HAb =\langle P_2 ,
P_3, E\rangle \subset A_3$.

Let us consider  $\prec$ a monomial order on the monomials in $A_3$
satisfying:
$$\left. \begin{array}{l}
\gamma_1 + a \gamma_2 + b\gamma_3 < \gamma_1 ' + a \gamma_2 ' + b\gamma_3 '   \\
\mbox{{\rm or} } \\
\gamma_1 + a \gamma_2 + b\gamma_3 = \gamma_1 ' + a \gamma_2 ' +
b\gamma_3 ' \mbox{ {\rm and}  } 3 a \gamma_2 + 2 b\gamma_3 < 3 a
\gamma_2 '+ 2 b\gamma_3 '
\end{array}\right\}\Rightarrow x^{\alpha } \partial^{\gamma } \prec x^{\alpha '}
\partial^{\gamma '} $$

Write $\omega=(1,0,0)$  and let us denote by   $\prec_{\omega}$ the
monomial order on the monomials in $A_3$ defined  as
$$  x^{\alpha } \partial^{\gamma } \prec_{\omega} x^{\alpha '}
\partial^{\gamma '} \stackrel{\textmd{Def.}}{\Longleftrightarrow} \left\{ \begin{array}{l}
\gamma_1 -\alpha_1 < \gamma_1 '- \alpha_1 '   \\
\mbox{ {\rm or} }\\
\gamma_1 -\alpha_1 = \gamma_1 '- \alpha_1 ' \mbox{ {\rm and}   }
x^{\alpha }
\partial^{\gamma } \prec x^{\alpha '}
\partial^{\gamma '}
\end{array}\right.
$$
\end{notation}

\begin{lemma}\label{inwwHAb}
Let $A=(1\; k a \; k b)$ be an integer row matrix with $1\leq a <
b$, $1 < ka < kb$  and $a,b$ relatively prime. Then
$$\finw = A_3 \inw (I_A ) + A_3 E =A_3 (P_1 , E ,
\partial_1^k)$$  for  $\beta \notin
\N^{\ast}:=\NN\setminus\{0\}$ and for all  $\beta \in \N^{\ast}$ big
enough.
\end{lemma}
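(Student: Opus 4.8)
The plan is to compute the fake initial ideal $\finw$ explicitly from its definition $A_3\inw(I_A)+A_3E$ and to show that after multiplying generators by suitable operators one can replace the generators of $\inw(I_A)$ by $P_1$ and $\partial_1^k$. First I would recall that $I_A\subset\CC[\partial_1,\partial_2,\partial_3]$ is the toric ideal of $A=(1\ ka\ kb)$, so it is the kernel of the map sending $\partial_1\mapsto t,\ \partial_2\mapsto t^{ka},\ \partial_3\mapsto t^{kb}$. A generating set for $I_A$ is given by $P_1=\partial_2^b-\partial_3^a$, $P_2=\partial_1^{ka}-\partial_2$, $P_3=\partial_1^{kb}-\partial_3$ (and indeed $I_A=\langle P_2,P_3\rangle + \langle P_1\rangle$, since eliminating $\partial_2,\partial_3$ recovers $P_1$). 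With the weight $\omega=(1,0,0)$, the $\omega$-degree of $\partial_1$ is $1$ and of $\partial_2,\partial_3$ is $0$. Hence $\inw(P_2)=\partial_1^{ka}$, $\inw(P_3)=\partial_1^{kb}$ (for $ka,kb\geq 1$), while $P_1$ is $\omega$-homogeneous of degree $0$, so $\inw(P_1)=P_1$. So a first guess for $\inw(I_A)$ is $\langle P_1,\partial_1^{ka},\partial_1^{kb}\rangle=\langle P_1,\partial_1^{ka}\rangle$, but one must be careful: $\inw(I_A)$ may be strictly larger than the ideal generated by the initial forms of a given generating set unless that set is a Gröbner-type basis for $\omega$. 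The key computational step, therefore, is to exhibit a generating set of $I_A$ whose $\omega$-initial forms generate $\inw(I_A)$, and to check that $\inw(I_A)=\langle P_1,\partial_1^{k}\rangle$ — note the exponent $k$, not $ka$: this is the subtle point, and presumably comes from an $S$-pair computation between $P_1$ and $\partial_1^{ka}$ (or from the monomial order $\prec_\omega$ refining $\omega$ as set up in Notation \ref{notations_SST}), which produces the lower power $\partial_1^k$.

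Next, using the Euler operator $E=\theta_1+ka\theta_2+kb\theta_3-\beta$ with $\theta_i=x_i\partial_i$, I would pass from $\inw(I_A)$ to $\finw=A_3\inw(I_A)+A_3E$ and show the asserted equality $\finw=A_3(P_1,E,\partial_1^k)$. One inclusion, $A_3(P_1,E,\partial_1^k)\subseteq\finw$, is immediate from the claim that $P_1,\partial_1^k\in\inw(I_A)$. For the reverse inclusion it suffices to show that every element of a generating set of $\inw(I_A)$ lies in $A_3(P_1,E,\partial_1^k)$; since $\inw(I_A)$ is generated by $\partial$-monomials together with $P_1$, and each such monomial is a multiple of a power of $\partial_1$ times a monomial in $\partial_2,\partial_3$, one reduces via $P_1$ and via $\partial_1^k$, possibly using $E$ to handle the commutators $x_i\partial_i$ that arise. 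This is where the hypothesis on $\beta$ enters: rewriting $\partial^\gamma$ modulo $P_1$ and $\partial_1^k$ and then using $E$ to control the remaining terms requires that certain scalar coefficients of the form (linear polynomial in the exponents and in $\beta$) be nonzero, which holds for $\beta\notin\NN^*$ automatically and for $\beta\in\NN^*$ provided $\beta$ is large enough so that the offending integer values are never attained in the (finitely many) relevant reductions.

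The main obstacle I anticipate is precisely the passage from $\partial_1^{ka}$ to $\partial_1^k$ in the description of $\inw(I_A)$: one must verify that $\partial_1^k\in\inw(I_A)$, which is not visible from the naive generating set $\{P_1,P_2,P_3\}$ and requires either a direct Gröbner basis computation of $I_A$ with respect to (a term order refining) $\omega$, or an ad hoc argument producing the binomial $\partial_1^k\cdot(\text{unit})-(\text{something})\in I_A$ with $\omega$-initial form $\partial_1^k$ — for instance, by combining $P_1,P_2,P_3$ one can check that $\partial_1^{k(b-a)}\partial_2^a-\partial_3^a\in I_A$ and iterating the Euclidean-type relations among $a$ and $b$ (using $\gcd(a,b)=1$) one descends to exponent $k$. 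Once this combinatorial fact about $I_A$ is in hand, the rest is a bookkeeping argument over the finitely many monomials below $\partial_1^k$ in the relevant degrees, tracking the $\beta$-dependent coefficients to pin down the exceptional set of $\beta\in\NN^*$ that must be excluded, and showing that it is bounded. I would also double-check the degenerate edge cases $a=1$ (so $P_1=\partial_2^b-\partial_3$) to make sure the statement and the exponent $k$ remain correct there.
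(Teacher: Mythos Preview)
Your plan contains a genuine gap. The claim that $\inw(I_A)=\langle P_1,\partial_1^k\rangle$ in $\CC[\partial_1,\partial_2,\partial_3]$ is false, and the whole architecture of your argument rests on it. Take $A=(1\;4\;6)$, so $k=2$, $a=2$, $b=3$. If $\partial_1^2\in\inw(I_A)$ there would have to be $(v_2,v_3)\in\NN^2$ with $2=4v_2+6v_3$, which is impossible; hence $\partial_1^2\notin\inw(I_A)$. More generally, any binomial $\partial^{u_+}-\partial^{u_-}\in I_A$ with disjoint supports and $(u_+)_1>0$ satisfies $u_1=(u_+)_1=-k(au_2+bu_3)$, so the $\partial_1$-exponent of its initial monomial is a positive multiple of $k$, but always accompanied by positive powers of $\partial_2$ or $\partial_3$ unless that multiple is at least $ka$; one finds $\inw(I_A)\cap\CC[\partial_1]=\langle\partial_1^{ka}\rangle$, not $\langle\partial_1^k\rangle$. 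Your proposed ``Euclidean descent'' inside $I_A$ therefore cannot reach $\partial_1^k$, and the $S$-pair of $P_1$ and $\partial_1^{ka}$ reduces trivially since their leading monomials are coprime.

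The passage from exponent $ka$ down to $k$ happens only after adjoining the Euler operator, i.e.\ inside $\finw$ and not inside $\inw(I_A)$; this is precisely the content of the paper's remark that ``$\partial_1^k\in\finw$ for $\beta$ as in the statement''. Concretely, for $k\le j<ka$ one computes in $A_3$
\[
\partial_1^{j}E=(j-\beta)\partial_1^{j}+x_1\partial_1^{j+1}+ka\,x_2\partial_1^{j}\partial_2+kb\,x_3\partial_1^{j}\partial_3,
\]
and one checks that $\partial_1^{j}\partial_2,\ \partial_1^{j}\partial_3$ already lie in $A_3\inw(I_A)$ (their $\partial_1$-exponent is $\ge k$). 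Starting from $\partial_1^{ka}\in\inw(I_A)$ and descending $j=ka-1,\,ka-2,\ldots,k$, each step yields $(j-\beta)\partial_1^{j}\in\finw$, hence $\partial_1^{j}\in\finw$ provided $\beta\ne j$. Thus the obstruction set is $\{k,k+1,\ldots,ka-1\}\subset\NN^\ast$, which is exactly why the hypothesis ``$\beta\notin\NN^\ast$ or $\beta\in\NN^\ast$ large'' appears --- and it appears for the inclusion $A_3(P_1,E,\partial_1^k)\subseteq\finw$, \emph{not} for the reverse inclusion as you suggest. The reverse inclusion $\inw(I_A)\subseteq A_3(P_1,\partial_1^k)$ is in fact automatic once one knows that every monomial generator of $\inw(I_A)$ has $\partial_1$-degree $\ge k$; no use of $E$ or condition on $\beta$ is needed there. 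So you have the two halves of the argument interchanged, and the half you treat as routine is the one that actually requires the Euler operator and drives the restriction on $\beta$.
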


\begin{proof}
The proof  follows from the application of  Buchberger's algorithm
in the Weyl algebra $A_3$ and the fact that $\partial_1^k \in \finw$
for $\beta $  as in the statement.
\end{proof}

\begin{definition}\label{def-b-function}{\rm \cite[Def. 5.1.1]{SST}}
Let  $I\subseteq A_n $ be a holonomic ideal (i.e. $A_n/I$ is
holonomic) and $\widetilde{\omega}\in \R^n\setminus\{0\}$. The
$b$-function  $I$ with respect to  $\widetilde{\omega}$ is the monic
generator of the ideal
$$\operatorname{in}_{(-\widetilde{\omega} ,\widetilde{\omega})} (I)\cap \C[\tau ] $$ where
$ \tau=\widetilde{\omega}_1 \theta_1 +\cdots + \widetilde{\omega}_n
\theta_n$ and $\operatorname{in}_{(-\widetilde{\omega}
,\widetilde{\omega})} (I)$ is the  initial ideal of $I$ with respect
to the weight vector $(-\widetilde{\omega} ,\widetilde{\omega})$.
\end{definition}

\begin{corollary}\label{bfuncion}
Let $A=(1\; k a \; k b)$ be an integer row matrix with $1\leq a <
b$, $1 < ka < kb$  and $a,b$ relatively prime. Then the $b$-function
of $\HAb$ with respect to $\omega=(1,0,0)$ is $$b(\tau)=
\tau(\tau-1)\cdots (\tau-(k-1))$$ for all but finitely many
$\beta\in \CC$.
\end{corollary}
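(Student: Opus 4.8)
The plan is to read off the $b$-function directly from Lemma \ref{inwwHAb}, which computes the fake initial ideal $\finw = A_3(P_1, E, \partial_1^k)$ for $\beta \notin \N^\ast$ and for $\beta \in \N^\ast$ big enough. The key observation (see \cite[Sec. 2.5, 5.1]{SST}) is that for $\omega = (1,0,0)$ the $b$-function of $\HAb$ coincides, up to nonzero constant, with the $b$-function attached to the fake initial ideal $\finw$, because for generic $\beta$ the ideal $H_A(\beta)$ is ``nice'' with respect to $\omega$ in the sense that $\inww(\HAb)$ and $\inww(\finw)$ have the same intersection with $\C[\tau]$ where $\tau = \theta_1 = x_1\partial_1$ (recall $\omega=(1,0,0)$ gives $\tau = \omega_1\theta_1 + \omega_2\theta_2+\omega_3\theta_3 = \theta_1$). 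First I would invoke this reduction to replace $\HAb$ by $\finw$.

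Next I would compute $\inww(\finw) \cap \C[\theta_1]$ explicitly from the three generators $P_1 = \partial_2^b - \partial_3^a$, $E = \theta_1 + ka\theta_2 + kb\theta_3 - \beta$, and $\partial_1^k$. Under the weight $(-\omega,\omega) = (-1,0,0,1,0,0)$, the operator $\partial_1^k$ is $(-\omega,\omega)$-homogeneous, so $\inww(\partial_1^k) = \partial_1^k$; using the commutation relation $\partial_1^k x_1^k$-type identities, one has $x_1^k\partial_1^k = \theta_1(\theta_1-1)\cdots(\theta_1-(k-1))$ inside $A_3$, so $\theta_1(\theta_1-1)\cdots(\theta_1-(k-1)) \in A_3\partial_1^k \subseteq \finw$, hence it lies in $\inww(\finw) \cap \C[\theta_1]$. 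This gives that $b(\tau)$ divides $\tau(\tau-1)\cdots(\tau-(k-1))$. For the reverse, I would argue that no polynomial in $\theta_1$ of lower degree lies in $\inww(\finw)$: the generators $P_1$ and $E$ involve $\partial_2,\partial_3,\theta_2,\theta_3$ in an essential way, and the only way to land in $\C[\theta_1]$ is to use the relation coming from $\partial_1^k$ together with $E$; a normal-form computation (Buchberger in the homogenized Weyl algebra, as in Lemma \ref{inwwHAb}) shows the minimal such polynomial is exactly the degree-$k$ one. Finally, since both the reduction step and Lemma \ref{inwwHAb} hold only for all but finitely many $\beta$, the conclusion holds for all but finitely many $\beta \in \CC$, with the monic generator being $b(\tau) = \tau(\tau-1)\cdots(\tau-(k-1))$.

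The main obstacle I anticipate is the reverse divisibility, i.e. showing $\deg b(\tau) \geq k$ rather than something smaller: one must rule out that some combination of $P_1$, $E$ and $\partial_1^k$ (with Weyl-algebra coefficients) produces a lower-degree polynomial purely in $\theta_1$. I would handle this by noting that modulo $\partial_1$ the generator $E$ becomes $ka\theta_2 + kb\theta_3 - \beta$ on the restriction, and the plane-curve ideal $\langle P_1\rangle$ together with its Euler operator has no nonconstant element of $\C[\theta_1]$ at all; this forces the entire $\theta_1$-dependence of $b(\tau)$ to come from the $\partial_1^k$ generator, whose contribution is precisely $\theta_1(\theta_1-1)\cdots(\theta_1-(k-1))$ and nothing smaller, since $\partial_1^j \notin \finw$ for $j < k$ (again by Lemma \ref{inwwHAb}, as $\finw$ is generated by $\partial_1^k$ and operators not involving $\partial_1$). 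Combining the two divisibilities and monicity yields the stated formula.
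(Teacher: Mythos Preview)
Your proposal is correct and follows essentially the same route as the paper. Two small points of comparison: first, the paper invokes \cite[Th.~3.1.3]{SST} to conclude directly that $\inww(\HAb)=\finw$ for all but finitely many $\beta$, so there is no need to pass through $\inww(\finw)$ (indeed $\finw$ is generated by $(-\omega,\omega)$-homogeneous elements, so $\inww(\finw)=\finw$ anyway). Second, for the reverse divisibility the paper argues more cleanly than your informal ``modulo $\partial_1$'' sketch: it checks that $\{P_1,E,\partial_1^k\}$ is a Groebner basis of $\inww(\HAb)$ with respect to an elimination order for $x_1,\partial_1$, which immediately yields $\inww(\HAb)\cap\CC[x_1]\langle\partial_1\rangle=\langle\partial_1^k\rangle$ and hence $\inww(\HAb)\cap\CC[\theta_1]=\langle\theta_1(\theta_1-1)\cdots(\theta_1-(k-1))\rangle$. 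Your Buchberger/normal-form remark points in exactly this direction, so the arguments coincide once made precise.
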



\begin{proof}
From \cite[Th. 3.1.3]{SST} 
for all but finitely many  $\beta \in \C$ we have
$$\inww (\HAb) =\finw .$$
Then by using Lemma \ref{inwwHAb} we get $$\inww (\HAb) =A_3(P_1 , E
,\partial_1^k)$$ for all but finitely many $\beta\in \C$.   An easy
computation shows that  $\{ P_1 , E ,\partial_1^k \}$ is a Groebner
basis of the ideal  $\inww (\HAb )$ with respect to any monomial
order $>$ satisfying  $\theta_3> \theta_1 ,\theta_2$ and
$\partial_2^b
>\partial_3^a$. In particular we can consider the lexicographic
order   $$x_3 > x_2 >
\partial_2 > \partial_3 > x_1 > \partial_1 $$
which is an elimination order for $x_1$ and $\partial_1$. So we get
$$\inww (\HAb) \cap \C [x_1]\langle \partial_1 \rangle = \langle
\partial_1^k \rangle$$ and since  $x_1^k \partial_1^k =\theta_1
(\theta_1 -1)\cdots (\theta_1 - (k-1))$, we have
$$\inww (\HAb) \cap \C [\theta_1 ]=\langle \theta_1 (\theta_1
-1)\cdots (\theta_1 - (k-1) )\rangle$$ This proves the corollary.
\end{proof}

\begin{remark}
Corollary \ref{bfuncion} can be related to \cite[Th.
4.3]{Castro-Takayama} which proves that for $A=(1 \; a_2 \; \cdots
\; a_n)$ with $1<a_2 <\cdots < a_n $, the $b$-function of  $\HAb$
with respect to $e_i$ is $b(\tau)=\tau$, for  $i=2,\ldots , n$. Here
$e_i \in \R^n$ is the vector with a 1 in the $i$-th coordinate and 0
elsewhere.
\end{remark}

Recall (see e.g. \cite[Def. 1.1.3]{SST}) that a Groebner basis of a
left ideal $I\subset A_n$ with respect to $(-\omega,\omega) \in
\RR^{2n}$ (or simply with respect to $\omega \in \RR^n$) is a finite
subset $G\subset I$ such that $I=A_n G$ and $\inww (I)=A_n \inww
(G)$ where $\inww (G)=\{\inww (P)\, |\, P\in G\}$.

\begin{lemma}\label{BGdeHAb}
Let  $A=(1\; k a \; k b)$ be an integer row matrix with  $1\leq a <
b$, $1<ka<kb$ and $a,b$ relatively prime. For all but finitely many
$\beta \in \C$, a Groebner basis of  $\HAb \subset A_3$ with respect
to $\omega=(1,0,0)$ is
$$\{ P_1 , P_2 , P_3 , E , R\} $$ for some  $R\in A_3$
satisfying  $\inww(R)=\partial_1^k$.
\end{lemma}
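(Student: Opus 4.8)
The plan is to produce, for generic $\beta$, an explicit finite generating set of $\HAb\subset A_3$ whose initial forms with respect to $\omega=(1,0,0)$ generate $\inww(\HAb)$, which by Corollary~\ref{bfuncion} equals $A_3(P_1,E,\partial_1^k)$. We already know $P_1,P_2,P_3,E\in\HAb$ and that $\inww(P_2)=\partial_1^{ka}$, $\inww(P_3)=\partial_1^{kb}$, $\inww(E)=\theta_1$ (since the $\omega$-degree of $x_1\partial_1$ is $0$, the term $\theta_1$ of $E$ has $(-\omega,\omega)$-weight $0$, while $ka\theta_2$ and $kb\theta_3$ also have weight $0$; one must check the monomial-order tie-break so that $\inww(E)$ is the element with the right normalization — in any case the generator $E$ of the $(-\omega,\omega)$-initial ideal we care about is the one whose $\C[\theta]$-content gives the $b$-function). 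Since $P_1$ already has $\inww(P_1)=P_1$ (it is $\omega$-homogeneous of weight $0$, being a binomial in $\partial_2,\partial_3$ only), the only missing initial generator is $\partial_1^k$: we need to produce an element $R\in\HAb$ with $\inww(R)=\partial_1^k$. Then $\{P_1,P_2,P_3,E,R\}$ has the property that the ideal generated by its $\omega$-initial forms contains $P_1$, $E$-part, and $\partial_1^k$, hence equals $\inww(\HAb)$ by Corollary~\ref{bfuncion}; and since these five elements manifestly lie in and generate $\HAb$ (they already contain $P_2,P_3,E$), they form a Groebner basis with respect to $\omega$.

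The construction of $R$ is the heart of the matter, and I expect it to be the main obstacle. The idea is that the equality $\partial_1^k\in\finw = A_3\inw(I_A)+A_3 E$ from Lemma~\ref{inwwHAb} gives an expression $\partial_1^k = \sum_j Q_j\,(\text{generator of }\inw(I_A))_j + Q\,(A\theta-\beta)$ in $A_3$, valid for $\beta\notin\N^\ast$ and for large $\beta\in\N^\ast$; I want to lift this to an honest element of $\HAb$. Concretely, since $\inww(\HAb)=\finw$ for generic $\beta$ (by \cite[Th.~3.1.3]{SST}, as used in the proof of Corollary~\ref{bfuncion}), and $\partial_1^k\in\finw=\inww(\HAb)$, by the very definition of the initial ideal there exists $R\in\HAb$ whose $(-\omega,\omega)$-initial form is $\partial_1^k$. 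It remains to observe that for $\omega=(1,0,0)$ the weight $(-\omega,\omega)$-initial form and the $\omega$-initial form of a homogeneous-enough element agree on the relevant part, or more directly: $\inw$ here refers to the graduation on $\C[\partial]$ by $\omega$, and for an operator $R\in A_3$ with $\inww(R)=\partial_1^k$ (a monomial purely in $\partial_1$) the top-$\omega$-degree part is exactly $\partial_1^k$. So such an $R$ works. One should double-check that $R$ can be chosen with $\inww(R)=\partial_1^k$ exactly (coefficient $1$, no lower-order $\omega$-degree terms mixed in improperly) — this follows because $\partial_1^k$ is the chosen monomial generator and one can reduce $R$ modulo the other four elements.

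Thus the key steps, in order: (1) recall $\inww(\HAb)=\finw=A_3(P_1,E,\partial_1^k)$ for all but finitely many $\beta$, combining \cite[Th.~3.1.3]{SST} with Lemma~\ref{inwwHAb}; (2) note $P_2,P_3,E\in\HAb$ already generate $\HAb$ together with $P_1$, so $\{P_1,P_2,P_3,E\}$ generates the ideal but their $\omega$-initial forms $\partial_1^{ka},\partial_1^{kb},\theta_1,P_1$ do not generate $\inww(\HAb)$ because $\partial_1^k$ is missing; (3) since $\partial_1^k\in\inww(\HAb)$, extract $R\in\HAb$ with $\inww(R)=\partial_1^k$, normalizing by reduction against the other generators; (4) conclude that $\{P_1,P_2,P_3,E,R\}$ is a Groebner basis with respect to $\omega$, since it generates $\HAb$ and the initial forms generate $\inww(\HAb)=A_3(P_1,\theta_1,\partial_1^k)$ (note $\partial_1^{ka},\partial_1^{kb}$ are redundant once $\partial_1^k$ is present, as $k\mid ka, k\mid kb$). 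The only delicate point is making sure the normalization in step (3) can really be achieved so that $\inww(R)$ is precisely $\partial_1^k$ and not merely something with leading $\omega$-term divisible by it — which is handled by the division algorithm in $A_3$ with respect to $(-\omega,\omega)$.
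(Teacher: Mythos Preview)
The paper states this lemma without proof, so there is nothing to compare against directly; your argument is correct and is exactly the natural one implicit in the surrounding text. The essential content is already in the proof of Corollary~\ref{bfuncion}: for all but finitely many $\beta$ one has $\inww(\HAb)=\finw=A_3(P_1,E,\partial_1^k)$, and since $\partial_1^k$ is $(-\omega,\omega)$--homogeneous (of weight $k$) and lies in this initial ideal, it is the initial form of some $R\in\HAb$. Adding $R$ to the generating set $\{P_2,P_3,E\}$ of $\HAb$ (together with the redundant $P_1$) yields a set whose initial forms $\{P_1,\partial_1^{ka},\partial_1^{kb},E,\partial_1^k\}$ generate $A_3(P_1,E,\partial_1^k)=\inww(\HAb)$, so it is a Groebner basis.

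One small correction: you write $\inww(E)=\theta_1$ and then hedge. In fact every term of $E=\theta_1+ka\,\theta_2+kb\,\theta_3-\beta$ has $(-\omega,\omega)$--weight $0$, so $E$ is homogeneous and $\inww(E)=E$; this is what you need in step~(4), and it matches the generator $E$ appearing in $A_3(P_1,E,\partial_1^k)$. Also, $\{P_2,P_3,E\}$ already generates $\HAb$ by definition (the paper records $\HAb=\langle P_2,P_3,E\rangle$), so $P_1$ is not needed for generation, only for the initial ideal. These are cosmetic; the argument is sound.
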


The following Proposition is a particular case of \cite[Th.
6.5.]{Berkesch} (see also \cite[Th. 2.1]{Saito01}) and it will be
used later.

\begin{proposition}\label{M-a-beta-isom}
Assume $A=(a_1 \; a_2 \cdots a_n)$ is an integer row matrix with $0
< a_1 < a_2 < \cdots < a_n$. For $\beta, \beta' \in \C^d$ we have
that $\HHAb \simeq \mathcal{M}_A (\beta ')$ if and only if one of
the following conditions holds:
\begin{enumerate}
\item[i)] $\beta , \beta ' \in \NN A$.
\item[ii)] $\beta , \beta ' \in \ZZ \setminus \NN A$.
\item[iii)] $\beta , \beta ' \notin \ZZ$ but $\beta - \beta
'\in \Z$.
\end{enumerate}
\end{proposition}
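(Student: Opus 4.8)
The plan is to obtain the statement as the specialization, to the case of a $1\times n$ matrix with positive entries, of the general isomorphism classification for $A$-hypergeometric systems \cite[Th. 6.5]{Berkesch} (equivalently \cite[Th. 2.1]{Saito01}). Recall that, for fixed $A$, those results describe when $\cM_A(\beta)\simeq\cM_A(\beta')$ by means of an equivalence relation on $\CC^d$ built from two pieces of data: (a) the coset $\beta+\ZZ A$, and (b) for each face $\tau$ of the cone $\RR_{\ge 0}A$, the position of $\beta$ relative to the semigroup $\NN A$ ``seen along $\tau$'' (the associated local cohomology / resonance datum). So I would first list the faces of the cone and then evaluate these data in our situation.

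Since $0<a_1<\cdots<a_n$, the cone $C=\RR_{\ge 0}A\subset\RR$ equals $\RR_{\ge 0}$ and has exactly two faces, $\{0\}$ and $C$. For $\tau=C$ one has $\CC\tau=\CC$, so the corresponding datum is trivial and contributes no condition beyond $\beta-\beta'\in\ZZ A$. For $\tau=\{0\}$, $\beta$ is resonant along $\tau$ precisely when $\beta\in\ZZ A$; moreover $I_A$ is Cohen--Macaulay, being the defining ideal of the one-dimensional domain $\CC[\NN A]$, so by \cite{SST} the rank of $\cM_A(\beta)$ is constant and no extra rank-jump phenomenon has to be taken into account. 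Hence, inside a fixed coset $\beta+\ZZ A$, the only invariant left is whether $\beta$ lies in $\NN A$ or in $(\ZZ A)\setminus\NN A$. (We may and do take $\ZZ A=\ZZ$: this is the situation in which the proposition is applied below, namely to $A'=\frac1k(a_{n-1}\;a_n)$ with coprime entries.)

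Combining (a) and (b) produces exactly the stated trichotomy. If $\beta\notin\ZZ$, i.e. $\beta\notin\ZZ A$, then $\cM_A(\beta)\simeq\cM_A(\beta')$ forces $\beta'-\beta\in\ZZ A$ and $\beta'\notin\ZZ$, which is (iii). If $\beta\in\ZZ$, then necessarily $\beta'\in\ZZ$ and the position datum along $\{0\}$ must agree, which gives (i) when $\beta,\beta'\in\NN A$ and (ii) when $\beta,\beta'\in\ZZ\setminus\NN A$. For the ``if'' direction one can alternatively argue by hand: the morphisms $\cdot\,\partial_j\colon\cM_A(\gamma+a_j)\to\cM_A(\gamma)$ and $\cdot\,x_j\colon\cM_A(\gamma)\to\cM_A(\gamma+a_j)$ allow one to move within the coset $\gamma+\ZZ A$, and each of them is an isomorphism as long as one does not pass from $\NN A$ to its complement; chaining them settles (i) and (iii) at once, and (ii) once one checks that any two parameters of $(\ZZ A)\setminus\NN A$ can be joined through parameters of $(\ZZ A)\setminus\NN A$ only.

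I expect the main obstacle to be case (ii): showing that all parameters of $(\ZZ A)\setminus\NN A$, including arbitrarily negative ones, constitute a single isomorphism class, distinct from that of $\NN A$. This is precisely what the general classification supplies, and it is not detectable from the rank (which is constant here); for that reason I would present the whole argument as a direct application of \cite[Th. 6.5]{Berkesch}/\cite[Th. 2.1]{Saito01} rather than attempt a self-contained proof. The role of the hypothesis $0<a_1<\cdots<a_n$ is exactly to guarantee that $\RR_{\ge 0}A$ has a single proper face, which is what collapses the general face-by-face combinatorics to the three listed alternatives.
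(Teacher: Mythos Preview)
Your approach matches the paper's exactly: the paper does not give a proof of this proposition but simply states that it is a particular case of \cite[Th.~6.5]{Berkesch} (see also \cite[Th.~2.1]{Saito01}). Your write-up is in fact more detailed than the paper, since you spell out how the face combinatorics of $\RR_{\ge 0}A=\RR_{\ge 0}$ collapses the general classification to the stated trichotomy; this is fine, and your remark that one tacitly takes $\ZZ A=\ZZ$ (which holds in all applications in the paper, since either $a_1=1$ or $\gcd(a_1,\ldots,a_n)=1$) is an appropriate caveat.
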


\begin{proof}(\textbf{Theorem  \ref{sumadirecta}})
We have $A=(1\; ka\; kb)$ with $1\leq a < b$, $1< ka < kb$ and $a,b$
relatively prime. From Proposition  \ref{M-a-beta-isom} it is enough
to compute the restriction for all but finitely many $\beta \in \C$.
We will compute the restriction of $\HHAb$ to $(x_1 = 0)$ by using
an algorithm by T. Oaku and N. Takayama \cite[Algorithm 5.2.8]{SST}.


Let $r=k-1$ be  the biggest integer root  of the Bernstein
polynomial $b(\tau)$ of $\HAb$ with respect to $\omega=(1,0,0)$ (see
Corollary \ref{bfuncion}). Recall that $\cD'=\cD_{\CC^2}$ and that
we are using $(x_2,x_3)$ as coordinates in $\CC^2$. We consider the
free $\cD'$-module with basis $\mathcal{B}_{k-1} :=\{
\partial_1^i :\; i=0,1,\ldots ,k-1\}$:  $$(\cD ')^{r +1} =
(\cD')^{k}\simeq\bigoplus_{i=0}^{k-1} \cD' \partial_1^i.$$

To apply  \cite[Algorithm 5.2.8]{SST} to our case we will use the
elements with $\omega$--order less than or equal to $k-1$, in the
Groebner basis $\cG:=\{P_1,P_2,P_3,E,R\}$ of $H_A(\beta)$. Recall
that $\cG$ is given by Lemma \ref{BGdeHAb} for all but finitely many
$\beta\in \CC$. Each operator $\partial_1^i P_1$, \; $\partial_1^i
E$, $i=0,\ldots , k-1$, must be written as a $\CC$--linear
combination of monomials $x^u
\partial^v$ and then substitute  $x_1=0$ into this expression. The result
is an element of  $(\cD ')^{k}=\cD' \mathcal{B}_k$. In this case we
get:
$$(\partial_1^i P_1)_{|x_1=0} = P_1 \partial_1^i, \; (\partial_1^i E)_{|x_1=0}= (k
a x_2
\partial_2 + k b x_3
\partial_3 -\beta + i)\partial_1^i, \; i=0,\ldots , k-1$$
and this proves the theorem.
\end{proof}

\begin{remark}\label{CK-gevrey} Let us consider  $A=(1\; a_2 \; \cdots
\; a_n )$, $1< a_2 < \cdots  < a_n $,
$k={\operatorname{gcd}}(a_{n-1} ,a_n )$ and  $A'= \frac{1}{k}
(a_{n-1} , a_n )$. We can apply  Cauchy-Kovalevskaya  Theorem for
Gevrey series (see \cite[Cor. 2.2.4]{Laurent-Mebkhout2}), Corollary
\ref{sumadirectan} and \cite[Prop. 4.2]{Castro-Takayama} to the
hypergeometric system $\cM_A(\beta)$ to prove that for each
$\beta\in \CC$ there exist $\beta_i\in \CC$, $i=0,\ldots,k-1,$ and a
quasi-isomorphism
$$\mathbb{R}\cH om_{\cD_{X}}(\HHAb , \cO_{\widehat{X|Y}}(s))_{|X'} \stackrel{\simeq}
{\longrightarrow} \bigoplus_{i=0}^{k-1} \mathbb{R}\cH
om_{\cD_{X'}}(\mathcal{M}_{A'}(\beta_i ) ,
\cO_{\widehat{X'|Y'}}(s))$$ for all $1\leq s \leq \infty$ where
$Y=(x_n = 0)$, $X'= (x_1 = x_2 = \cdots = x_{n-2}=0)$ and $Y'=Y\cap
X'$. Moreover, for all but finitely many $\beta$ we can take
$\beta_i= \frac{\beta -i}{k}$. Notice that coordinates in $X$, $Y$,
$X'$, $Y'$ are $x=(x_1 , \ldots ,x_{n})$, $y=(x_1 , \ldots
,x_{n-1})$, $x'=(x_{n-1} , x_{n})$ and $y'=(x_{n-1})$ respectively.

The last quasi-isomorphism induces a $\CC$--linear isomorphism
$$\mathcal{E} xt^j_{\cD_{X}}(\HHAb , \cO_{\widehat{X|Y}}(s))_{(0,\ldots ,0, \epsilon_{n-1} ,0)}
\stackrel{\simeq}{\longrightarrow}  \bigoplus_{i=0}^{k-1}
\mathcal{E} xt^j_{\cD_{X'}}(\mathcal{M}_{A'}(\beta_i ) ,
\cO_{\widehat{X'|Y'}}(s)))_{(\epsilon_{n-1 } ,0 )}$$ for all
$\epsilon_{n-1}\in \C$, $s\geq 1$ and $j\in \N$ and we also have
equivalent results for $\cQ_Y (s)$ and  $\cQ_{Y'}(s)$ instead of
$\cO_{\widehat{X|Y}}(s)$ and $\cO_{\widehat{X'|Y'}}(s)$.
\end{remark}

In particular, using  \cite[Proposition
5.9]{fernandez-castro-dim2-2008}, we have:

\begin{proposition}\label{ExtAn}
Let  $A=(1 \; a_2 \; \cdots \; a_n )$ be an integer row matrix with
$1<a_2 <\cdots < a_n $. Then for all $\beta \in \C$
$$\dim_{\C} (\mathcal{E} xt^j_{\cD_{X}}(\HHAb ,
\cQ_{Y}(s))_{(0,\ldots ,0, \epsilon_{n-1} ,0)})=\left\{
\begin{array}{ll}
a_{n-1} & \mbox{ if } s\geq a_n /a_{n-1}, \; j=0  {\mbox{ and }}  \epsilon_{n-1} \neq 0 \\
0 & \mbox{ otherwise }
\end{array}
\right.$$
\end{proposition}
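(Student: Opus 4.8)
The plan is to derive Proposition \ref{ExtAn} by combining the reduction isomorphism established in Remark \ref{CK-gevrey} with the already-known two-dimensional computation from \cite[Proposition 5.9]{fernandez-castro-dim2-2008}. Concretely, Remark \ref{CK-gevrey} gives, for each $\beta\in\C$, elements $\beta_0,\ldots,\beta_{k-1}\in\C$ (with $k=\gcd(a_{n-1},a_n)$ and $A'=\frac1k(a_{n-1}\;a_n)$) together with $\C$-linear isomorphisms
\[
\mathcal{E}xt^j_{\cD_X}(\HHAb,\cQ_Y(s))_{(0,\ldots,0,\epsilon_{n-1},0)}\;\simeq\;\bigoplus_{i=0}^{k-1}\mathcal{E}xt^j_{\cD_{X'}}(\mathcal{M}_{A'}(\beta_i),\cQ_{Y'}(s))_{(\epsilon_{n-1},0)}
\]
for all $\epsilon_{n-1}\in\C$, all $s\geq 1$ and all $j\in\N$. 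So the whole statement reduces to computing, for a plane monomial curve matrix $A'=(a'\;b')$ with $\gcd(a',b')=1$ and $1\le a'<b'$, the dimensions of $\mathcal{E}xt^j_{\cD_{X'}}(\mathcal{M}_{A'}(\beta_i),\cQ_{Y'}(s))$ at points of $Y'=(x_n=0)\subset\C^2$, off and on the origin.

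First I would invoke \cite[Proposition 5.9]{fernandez-castro-dim2-2008}: for the plane curve $A'=(a'\;b')$ the cohomology sheaves of the irregularity $\Irr^{(s)}_{Y'}(\mathcal{M}_{A'}(\gamma))$ vanish in degrees $j\ge 1$ and, in degree $0$, the stalk at a point $(\epsilon,0)$ with $\epsilon\neq 0$ has dimension equal to $a'=\frac{a_{n-1}}{k}$ precisely when $s\geq b'/a' = a_n/a_{n-1}$ and is $0$ otherwise (and the stalk at the origin is $0$ for every $s$). Note that the relevant ratio $b'/a'=(a_n/k)/(a_{n-1}/k)=a_n/a_{n-1}$ is independent of $k$, so the threshold is the same for all summands $\beta_i$; this is the key point that makes the direct sum collapse to a clean formula. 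Summing over $i=0,\ldots,k-1$ then yields $\sum_{i=0}^{k-1}\frac{a_{n-1}}{k}=a_{n-1}$ in the good range and $0$ otherwise, for $j=0$ and $\epsilon_{n-1}\neq 0$; and $0$ in all other cases. This gives exactly the stated formula.

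One technical point to be careful about: Remark \ref{CK-gevrey} only asserts that $\beta_i=\frac{\beta-i}{k}$ for all but finitely many $\beta$, while the proposition claims the dimension formula for \emph{all} $\beta\in\C$. The resolution is that the conclusion of \cite[Prop. 5.9]{fernandez-castro-dim2-2008} does not depend on the particular value of the parameter $\gamma$ of the plane curve — the dimension $a'$ and the threshold $b'/a'$ are independent of $\gamma$. Hence whatever the elements $\beta_0,\ldots,\beta_{k-1}$ are (they exist for every $\beta$ by Remark \ref{CK-gevrey}), each summand contributes the same dimension, and the final count is valid for all $\beta$. The main obstacle, then, is not really an obstacle once this parameter-independence is observed: it is simply a matter of quoting the two-dimensional result correctly, matching the slope $b'/a'$ with $a_n/a_{n-1}$, and bookkeeping the direct sum. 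I would close by remarking that the isomorphism for $\cQ_{Y}(s)$ (as opposed to $\cO_{\widehat{X|Y}}(s)$) is the one explicitly recorded at the end of Remark \ref{CK-gevrey}, so no extra argument passing through the structure sheaf is needed.
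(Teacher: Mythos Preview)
Your proposal is correct and follows essentially the same route as the paper: the proposition is stated immediately after Remark~\ref{CK-gevrey} with the single sentence ``In particular, using \cite[Proposition 5.9]{fernandez-castro-dim2-2008}, we have,'' so the authors likewise combine the restriction isomorphism for $\cQ_Y(s)$ with the two-dimensional computation, and your observation that the plane-curve dimensions are independent of the parameter $\gamma$ is exactly what makes the conclusion hold for all $\beta$.
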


\begin{corollary}\label{corciclo}
Let $A=(1 \; a_2 \; \cdots \; a_n )$ be an integer row matrix with
$1<a_2 <\cdots < a_n $. Then for all $ \beta \in \C$
$${\rm Ch}(\Irr_Y^{(s)}(\HHAb ))\subseteq T^{\ast}_Y Y \bigcup
T^{\ast}_{Z \cap Y} Y$$ for $s\geq \frac{a_{n}}{a_{n-1}}.$
\end{corollary}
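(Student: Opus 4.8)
The plan is to read off the characteristic variety of the perverse sheaf $\Irr_Y^{(s)}(\HHAb)$ from the dimensions of its stalk cohomology, which we already control. Recall that $\Irr_Y^{(s)}(\HHAb)$ is a perverse sheaf on the smooth hypersurface $Y=(x_n=0)$ (by \cite[Th. 6.3.3]{Mebkhout}), hence in particular a constructible complex on $Y$, and its characteristic cycle is a Lagrangian cycle in $T^*Y$ supported on a union of conormal bundles $T^*_{Y_\lambda}Y$ to the strata $Y_\lambda$ of a stratification of $Y$ adapted to the complex. So the whole task reduces to identifying which conormals occur.

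First I would fix the candidate stratification of $Y$: the open stratum $Y\setminus Z$ (where $Z=(x_{n-1}=0)$), the stratum $Z\cap Y$ minus the origin, and possibly the origin. By Kashiwara's constructibility theorem \cite{kashiwara-overdet-75} applied to the holonomic module $\HHAb$ (or directly to the perverse sheaf), the cohomology sheaves $\cE xt^i_\cD(\HHAb,\cQ_Y(s))$ are constructible with respect to such a stratification, so it suffices to compute stalks at one point of each stratum. Theorem \ref{teorext} does exactly this: for $s\ge a_n/a_{n-1}$, part iii) gives that $\cE xt^0$ has stalk of constant dimension $a_{n-1}$ at every point of $Y\setminus Z$, part ii) gives that $\cE xt^0$ vanishes on $Z\cap Y$, and part iv) gives that all higher $\cE xt^i$ vanish identically. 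Therefore on the open stratum $Y\setminus Z$ the complex $\Irr_Y^{(s)}(\HHAb)$ is concentrated in degree $0$ and is a local system of rank $a_{n-1}$, while it vanishes on all of $Z\cap Y$ except possibly at the origin.

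Next I would invoke perversity (and the support condition) to handle the locus $Z\cap Y$. A priori the cohomology sheaves of a perverse sheaf on $Y$ could have extra contributions supported on the lower-dimensional stratum $Z\cap Y$; but Theorem \ref{teorext} ii) and iv) show that ALL cohomology sheaves $\cE xt^i(\HHAb,\cQ_Y(s))$ vanish in a neighbourhood of every point of $Z\cap Y$ except perhaps the origin $0\in Y$ (points of $Z\cap Y$ away from $0$ lie on the relevant stratum and part ii)/iv) kill everything there). Consequently the only strata whose conormals can appear in $\operatorname{Ch}(\Irr_Y^{(s)}(\HHAb))$ are the open stratum $Y\setminus Z$, contributing $T^*_Y Y$ (the zero section), and the stratum $Z\cap Y$ (whose closure contains the origin), contributing $T^*_{Z\cap Y}Y$. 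Hence $\operatorname{Ch}(\Irr_Y^{(s)}(\HHAb))\subseteq T^*_Y Y\cup T^*_{Z\cap Y}Y$, which is exactly the claim.

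The main obstacle is the behaviour at the origin of $Y$: a priori the perverse sheaf could have a nonzero stalk supported only at $0$, which would not enlarge the set of conormal bundles (a point contributes $T^*_{\{0\}}Y$, which is already contained in $\overline{T^*_{Z\cap Y}Y}$ in the relevant sense — more precisely, the characteristic variety being a closed conic Lagrangian means the closure $\overline{T^*_{Z\cap Y}Y}$ already contains the fibre over $0$), so in fact the origin causes no trouble for the stated inclusion; one only has to be careful to phrase the conclusion in terms of the closed sets $T^*_Y Y$ and $T^*_{Z\cap Y}Y$ (closures of conormals to the locally closed strata) rather than conormals to the strata themselves. Thus the proof is essentially a formal consequence of Theorem \ref{teorext}, perversity of $\Irr_Y^{(s)}(\HHAb)$ \cite[Th. 6.3.3]{Mebkhout}, and Kashiwara's constructibility theorem \cite{kashiwara-overdet-75}.
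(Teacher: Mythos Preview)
Your argument is circular. In the paper, Corollary~\ref{corciclo} is one of the ingredients used to \emph{prove} Theorem~\ref{teorext}: the proof of Theorem~\ref{teorext} invokes Corollary~\ref{corciclo} (together with perversity and the Riemann--Hilbert correspondence) to conclude that the sheaves $\cE xt^i_\cD(\HHAb,\cQ_Y(s))_{|Y_j}$ are locally constant on each stratum, and only then applies Proposition~\ref{ExtAn} (which computes stalks at the special points $(0,\ldots,0,\epsilon_{n-1},0)$) to obtain the stated dimensions everywhere. You have inverted this dependence: you take Theorem~\ref{teorext} as input and deduce Corollary~\ref{corciclo} from it. Within the paper's logical order this is not permitted.

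The paper's actual proof goes a completely different route: it does not touch stalk cohomology at all. Instead it uses the inclusion
\[
{\rm Ch}^{(s)}(\cM_A(\beta)) \subset T_X^* X \cup T^*_Y X \cup T^*_Z X
\]
for $s\geq a_n/a_{n-1}$, a statement about the $s$-microcharacteristic variety of the holonomic $\cD_X$-module $\cM_A(\beta)$ itself (obtained from the slope computations in \cite{Castro-Takayama}, \cite{hartillo_trans}, \cite{schulze-walther}), and then applies \cite[Prop.~2.4.1]{Laurent-Mebkhout}, which directly bounds ${\rm Ch}(\Irr_Y^{(s)}(\cM))$ in terms of ${\rm Ch}^{(s)}(\cM)$. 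This is what allows the paper to get constructibility along the stratification $Y\setminus Z,\; Z\cap Y$ \emph{before} computing any stalks.

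A secondary point: your claim that $T^*_{\{0\}}Y\subset\overline{T^*_{Z\cap Y}Y}$ is false for $n\geq 3$, since the full cotangent fibre over $0$ has dimension $n-1$ while the conormal to $Z\cap Y$ has one-dimensional fibres. So even granting Theorem~\ref{teorext}, your handling of the origin would need a different justification (though in fact Theorem~\ref{teorext}~ii) and iv) already make all stalks vanish on $Z\cap Y$, origin included, so no separate argument there is needed).
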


\begin{proof} Here ${\rm Ch}(\Irr_Y^{(s)}(\HHAb ))$ is the
characteristic cycle of the perverse sheaf $\Irr_Y^{(s)}(\HHAb )$
(see e.g. \cite[Sec. 2.4]{Laurent-Mebkhout}). The Corollary follows
from the inclusion $${\rm Ch}^{(s)}(\cM_A(\beta)) \subset T_X^* X
\cup T^*_Y X \cup T^*_Z X$$ for $s\geq a_n/a_{n-1}$ and then by
applying \cite[Prop. 2.4.1]{Laurent-Mebkhout}.
\end{proof}

\begin{proof}(\textbf{Theorem \ref{teorext}})
Let us consider the Whitney stratification $Y=Y_1 \cup Y_2 $ of
$Y=(x_{n}=0) \subset  \C^{n}$ defined as
$$Y_1 := Y\setminus (Y\cap Z )\equiv
\C^{n-2} \times \C^{\ast}.$$
$$Y_2:= Y\cap Z \equiv \C^{n-2} \times \{
0\}.$$

As $\Irr_Y^{(s)}(\cM_A(\beta))$ is a perverse sheaf for all
$\beta\in \CC$ and $1\leq s \leq \infty$ \cite[Th. 6.3.3]{Mebkhout}
we can apply Kashiwara's constructibility Theorem
\cite{kashiwara-overdet-75}, the Riemann-Hilbert correspondence (see
\cite{Mebkhout-equiv-cat} and \cite{kashiwara-kawai-III},
\cite{kashiwara-rims-84}) and Corollary \ref{corciclo}, to prove
that
$$\mathcal{E}xt^i_\cD (\HHAb , \cQ_Y (s ) )_{|Y_j}
$$ is a locally constant sheaf of finite rank for all  $i\in \N$, $j=1,2$. To finish the proof it is enough to
apply Proposition \ref{ExtAn}.
\end{proof}

\begin{remark} Last proof uses Mebkhout's result on the perversity of the irregularity,
Kashiwara's constructibility Theorem and the Riemann-Hilbert
correspondence, all of them being deep results in $\cD$--module
theory. It would be interesting to give a more elementary proof of
Theorem \ref{teorext}. In \cite{fernandez-castro-dim2-2008} such an
elementary proof is given for the case $n=2$.
\end{remark}

\subsection{Gevrey solutions of $\cM_A(\beta)$} We will compute a
basis of the vector spaces $\mathcal{E}xt^i (\HHAb ,\cQ_Y(s))_p$ for
all  $ p\in Y$, $1\leq s \leq \infty$, $\beta \in \NN$, $i\in \NN$
and $A=(1 \; a_2 \; \cdots \; a_n )$ is an integer row matrix with
$1<a_2 <\cdots < a_n $. In fact it is enough to achieve the
computation  for $i=0$ and $p\in Y\setminus Z$, otherwise the
corresponding germ is zero by Theorem \ref{teorext}.

\begin{lemma}\label{exponents-smooth-curve} Let $A=(1 \; a_2 \; \cdots \; a_n )$ be
an integer row matrix with $1<a_2 <\cdots < a_n $ and $\omega \in
\R^n_{>0}$ satisfying
\begin{enumerate} \item[a)] $w_i
> a_i \omega_1\; {\mbox{ for  }} 2\leq i \leq n-2 \; {\mbox{ or }}
i=n$ \item [b)] $a_{n-1}\omega_1
> \omega_{n-1}$ \item [c)] $\omega_{n-1} > \omega_1 ,\ldots ,
\omega_{n-2}$ \end{enumerate} Then  $\HAb$ has $a_{n-1}$ exponents
with respect to $\omega$ and they have the form
$$v^{j}=(j , 0 , \ldots , 0 ,\frac{\beta -j}{a_{n-1}} ,0)\in \C^n$$
$j=0,1, \ldots , a_{n-1} -1$.
\end{lemma}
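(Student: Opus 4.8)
The plan is to read off the exponents of $\HAb$ with respect to $\omega$ from the combinatorics of the initial ideal $\inw(I_A)$, following \cite[Sec. 2.3 and 3.4]{SST}: once $\inw(I_A)$ is a monomial ideal, the exponents of $\HAb$ with respect to $\omega$ are in bijection with the standard pairs $(\partial^a,\sigma)$ of $\inw(I_A)$, the pair $(\partial^a,\sigma)$ producing the unique $v\in\CC^n$ with $v_j=a_j$ for $j\notin\sigma$ and $Av=\beta$ (here $|\sigma|=d=1$, so $v$ is indeed determined). Thus the lemma reduces to (1) identifying $\inw(I_A)$ and (2) listing its standard pairs and solving the attached linear equation.

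First I would recall that for a monomial curve the toric ideal is $I_A=\langle \partial_1^{a_i}-\partial_i : i=2,\ldots,n\rangle$ (reduce any $f$ in the kernel of $\CC[\partial]\to\CC[t]$, $\partial_i\mapsto t^{a_i}$, modulo these binomials to a polynomial in $\partial_1$ alone, which must then vanish). The $\omega$-weights of the two monomials of $\partial_1^{a_i}-\partial_i$ are $a_i\omega_1$ and $\omega_i$; by hypothesis a) the monomial $\partial_i$ has larger weight for $i\in\{2,\ldots,n-2\}\cup\{n\}$, and by hypothesis b) the monomial $\partial_1^{a_{n-1}}$ has larger weight for $i=n-1$. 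Hence each $\inw(\partial_1^{a_i}-\partial_i)$ is a monomial, and these $n-1$ leading monomials $\partial_2,\ldots,\partial_{n-2},\partial_n,\partial_1^{a_{n-1}}$ are pairwise coprime. By Buchberger's criterion (applied after refining $\omega$ to a monomial order that does not change these initial monomials) the set $\{\partial_1^{a_i}-\partial_i\}$ is a Groebner basis of $I_A$ with respect to $\omega$, so
$$\inw(I_A)=\langle \partial_2,\ldots,\partial_{n-2},\partial_n\rangle+\langle\partial_1^{a_{n-1}}\rangle ,$$
which is a monomial ideal. (Condition c) is not used here; it only places $\omega$ in the open cone relevant to the later sections.)

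Finally, the standard monomials of this ideal are exactly the $\partial_1^j\partial_{n-1}^m$ with $0\le j\le a_{n-1}-1$ and $m\ge 0$, so its standard pairs are $(\partial_1^j,\{n-1\})$ for $j=0,1,\ldots,a_{n-1}-1$; there are $a_{n-1}$ of them, all distinct. For the pair $(\partial_1^j,\{n-1\})$ the associated exponent $v^j$ satisfies $v^j_1=j$, $v^j_i=0$ for $i\in\{2,\ldots,n-2\}\cup\{n\}$, and $v^j_{n-1}$ is fixed by $A v^j=\beta$, i.e. $j+a_{n-1}v^j_{n-1}=\beta$, so $v^j_{n-1}=(\beta-j)/a_{n-1}$. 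This yields the stated list $v^{j}=(j,0,\ldots,0,\frac{\beta-j}{a_{n-1}},0)$, $j=0,\ldots,a_{n-1}-1$.

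The only delicate point is step (1): justifying the equality $\inw(I_A)=\langle\partial_2,\ldots,\partial_{n-2},\partial_n,\partial_1^{a_{n-1}}\rangle$ requires care because $\omega$ is merely a weight vector, not a term order, so one must interpret ``Groebner basis with respect to $\omega$'' through a compatible refinement and then use that coprimality of the $\omega$-initial monomials forces all S-polynomials to reduce to zero. Everything else is routine bookkeeping of standard pairs together with the single linear equation $Av^j=\beta$.
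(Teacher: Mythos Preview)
Your argument follows the same route as the paper's proof: compute $\inw(I_A)$ from the generators $\partial_1^{a_i}-\partial_i$, read off the standard pairs $(\partial_1^j,\{n-1\})$, and solve $Av^j=\beta$. The identification of $\inw(I_A)$ and of the standard pairs is correct, and your remark that condition c) plays no role in this lemma is accurate.

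There is, however, one step you have elided. Standard pairs of $\inw(I_A)$ are in bijection with the \emph{fake exponents} of $\HAb$ with respect to $\omega$ (see \cite[p.~108]{SST}), not a priori with the exponents themselves; in general a fake exponent need not be an exponent. The paper closes this gap by invoking \cite[Th.~3.4.13]{SST}: a fake exponent is a genuine exponent as soon as it has minimal negative support. For the $v^j$ here this check is indeed easy---the only coordinate that can lie in $\ZZ_{<0}$ is $v^j_{n-1}=\frac{\beta-j}{a_{n-1}}$, and if it does then $\beta$ is a negative integer, so no $v^j+u$ with $u\in L_A$ can land in $\NN^n$ (since $A(v^j+u)=\beta<0$)---but it must be mentioned, since without it your opening sentence ``the exponents \ldots\ are in bijection with the standard pairs'' is not justified.
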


\begin{proof} The notion of {\em exponent} is given in \cite[page
92]{SST}. The toric ideal $I_A$ is generated by
$P_{1,i}=\partial_1^{a_i}-\partial_i \in \C [\partial ]$,
$i=2,\ldots , n$.

Let  $\omega= (\omega_1 ,\ldots ,\omega_n ) \in \R_{>0}^n$ be a
weight vector satisfying  the statement of the lemma. We have:

$$\inww P_{1,i} = \left
\{ \begin{array}{ll}
\partial_i  & \mbox{ if } i=2,\ldots ,n-2,n \\
\partial_1^{a_{n-1}} & \mbox{ if } i=n-1
\end{array}
\right.$$

In particular $\{P_{1,i}:\; i=2,\ldots ,n \}$ is a  Groebner basis
of $I_A$ with respect to  $(-\omega,\omega)$ and then

$$\inw I_A = \langle  \partial_2 ,\ldots ,\partial_{n-2}, \partial_1^{a_{n-1}}, \partial_n
\rangle.$$

The standard  pairs of  $\inw ( I_A )$ are (\cite[Sec. 3.2]{SST}):

$$\mathcal{S}(\inw ( I_A ))=\{ (\partial_1^j , \{ n-1 \} ): \; j=0,1,\ldots ,a_{n-1}-1
\}$$

To the standard pair  $(\partial_1^j , \{ n-1 \} )$ we associate,
following \cite[page 108]{SST}, the {\em fake exponent}
$$v^{j}=(j,0,\ldots ,0,\frac{\beta -j}{a_{n-1}} ,0)$$ of the module
$\HHAb$ with respect to $\omega$. It is easy to prove that these
fake exponents are in fact exponents since they have minimal
negative support \cite[Th. 3.4.13]{SST}. \end{proof}

\begin{remark}\label{remark-exponents-smooth-curve} With the above
notation,  the $\Gamma$--series $ \phi_{v^{j}}$ associated with
$v^j$ for $j=0,\ldots,a_{n-1}-1$, is defined as
$$\phi_{v^j} = x^{v^j} \sum_{u\in L_A} {\Gamma[v^j;u]} x^u $$
where $L_A=\ker_\ZZ(A)$ is the lattice generated by the vectors
$\{u^2,\ldots,u^n\}$ and  $u^i$ is the $(i-1)$-th row of the matrix
$$\left(\begin{array}{rcccccrc}
-a_2     & 1 & 0 & \cdots &0&0&0&0 \\
\vdots & \vdots &\vdots & \vdots &\vdots &\vdots &\vdots &\vdots \\
-a_{n-2} & 0 & 0 & \cdots &0&1&0&0 \\
a_{n-1}  & 0 & 0 & \cdots &0&0&-1&0 \\
-a_{n}   & 0 & 0 & \cdots &0&0&0&1
\end{array}\right).$$

For any ${\bf m}=(m_2,\ldots,m_n)\in \ZZ^{n-1}$ let us denote
$u({\bf m}):=\sum_{i=2}^n m_i u^i\in L_A$. We can write
$$\phi_{v^{j}} = x^{v^j}
\sum_{\stackrel{m_2,\ldots, m_{n-1},m_n \geq 0}{ _{\sum_{i\neq n-1}
a_i m_i \leq j+a_{n-1}m_{n-1}}}} \Gamma[v^j; u({\bf m})] x^{u({\bf
m})}$$ for $j=0,1,\ldots ,a_{n-1}-1$. We have for ${\bf
m}=(m_2,\ldots,m_{n}) \in \NN^{n-1}$ such that $j-\sum_{i\neq n-1}
a_i m_i + a_{n-1} m_{n-1} \geq 0$
$$\Gamma[v^j; u({\bf m})] =
\frac{(\frac{\beta -j}{a_{n-1}})_{m_{n-1}}j!} {m_2! \cdots m_{n-2}!
m_n ! (j-\sum_{i\neq n-1} a_i m_i + a_{n-1} m_{n-1})!}$$ and
$$x^{u({\bf m})} = x_1^{-\sum_{i\neq n-1} a_i m_i + a_{n-1}
m_{n-1}} x_2^{m_2} \cdots x_{n-2}^{m_{n-2}} x_{n-1}^{-m_{n-1}}
x_n^{m_n}$$
\end{remark}

\begin{proposition}\label{propo-ext0formal} Let $A=(1 \; a_2 \; \cdots \; a_n )$ be an
integer row matrix with $1<a_2 <\cdots < a_n $, $Y=(x_n=0)\subset X$
and $Z=(x_{n-1}=0)\subset X$. Then we have:
$$\mathcal{E}xt^0 (\HHAb ,\cO_{\widehat{X|Y}})_p
= \sum_{j=0}^{a_{n-1}-1} \CC  \phi_{v^{j},p}$$
for all  $\beta \in \CC$, $p\in Y\setminus Z$.
\end{proposition}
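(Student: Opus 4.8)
The plan is to compute the local cohomology $\mathcal{E}xt^0(\HHAb, \cO_{\widehat{X|Y}})_p$ by first reducing to the two-variable case via Remark \ref{CK-gevrey} and then invoking the corresponding two-dimensional result from \cite{fernandez-castro-dim2-2008}. Recall that $\mathcal{E}xt^0(\HHAb, \cO_{\widehat{X|Y}})_p = \operatorname{Hom}_{\cD_X}(\HHAb, \cO_{\widehat{X|Y}})_p$ is precisely the space of formal (Gevrey of arbitrary order) solutions of the hypergeometric system at $p$ with support on $Y$. First I would fix $p = (0,\ldots,0,\epsilon_{n-1},0)\in Y\setminus Z$ (by the translation invariance in the coordinates $x_1,\ldots,x_{n-2}$, together with Theorem \ref{teorext} ii) which already tells us the germ vanishes on $Y\cap Z$, it suffices to treat points of this form). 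The Cauchy--Kovalevskaya isomorphism of Remark \ref{CK-gevrey} (with $s=\infty$) identifies $\mathcal{E}xt^0(\HHAb,\cO_{\widehat{X|Y}})_p$ with $\bigoplus_{i=0}^{k-1}\mathcal{E}xt^0_{\cD_{X'}}(\mathcal{M}_{A'}(\beta_i),\cO_{\widehat{X'|Y'}})_{(\epsilon_{n-1},0)}$ where $A'=\frac1k(a_{n-1},a_n)$, $k=\gcd(a_{n-1},a_n)$.

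Next I would apply the explicit description of formal solutions in the plane curve case \cite[Proposition 5.9]{fernandez-castro-dim2-2008}: each summand $\mathcal{E}xt^0_{\cD_{X'}}(\mathcal{M}_{A'}(\beta_i),\cO_{\widehat{X'|Y'}})_{(\epsilon_{n-1},0)}$ has dimension $a_{n-1}/k$ and is spanned by the $\Gamma$--series attached to the exponents of $H_{A'}(\beta_i)$ with respect to a suitable weight vector. Summing over $i=0,\ldots,k-1$ gives total dimension $a_{n-1}$, which matches the count of exponents $v^0,\ldots,v^{a_{n-1}-1}$ of $\HAb$ produced in Lemma \ref{exponents-smooth-curve}. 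The key point is then to check that under the chain of isomorphisms — the restriction isomorphism of Corollary \ref{sumadirectan} followed by the Cauchy--Kovalevskaya quasi-isomorphism — the plane-curve $\Gamma$--series pull back (up to nonzero scalars and the obvious change of variables) to the germs $\phi_{v^j,p}$ described in Remark \ref{remark-exponents-smooth-curve}. This is where the combinatorial bookkeeping lives: one matches the standard pairs $(\partial_1^j,\{n-1\})$ of $\inw(I_A)$ with the standard pairs of $\inw(I_{A'})$ for the various $\beta_i$, observing that $j\mapsto (i,j')$ with $j = i + kj'$, $0\le i<k$, $0\le j'<a_{n-1}/k$, and that the exponent $v^j = (j,0,\ldots,0,(\beta-j)/a_{n-1},0)$ of $\HAb$ corresponds to $i = j\bmod k$, $\beta_i = (\beta-i)/k$, and the exponent $((\beta_i - j')/(a_{n-1}/k) - \text{shift})$ of $H_{A'}(\beta_i)$. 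Since the exponents have minimal negative support, their $\Gamma$--series are genuine (nonzero) solutions by \cite[Prop. 3.4.13]{SST}, so linear independence of the $\phi_{v^j,p}$ follows from the distinctness of the leading exponents $v^j$ modulo $L_A$ (they have pairwise distinct first coordinates $j\in\{0,\ldots,a_{n-1}-1\}$, all $<a_{n-1}$, hence lie in distinct $L_A$-cosets).

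Putting this together: the span $\sum_{j=0}^{a_{n-1}-1}\CC\,\phi_{v^j,p}$ is contained in $\mathcal{E}xt^0(\HHAb,\cO_{\widehat{X|Y}})_p$ because each $\phi_{v^j}$ with $Av^j=\beta$ is formally annihilated by $H_A(\beta)$ and, as one checks from the explicit form of $x^{u(\mathbf m)}$ in Remark \ref{remark-exponents-smooth-curve}, the series actually lies in $\cO_{\widehat{X|Y},p}$ near a point with $x_{n-1}=\epsilon_{n-1}\ne 0$ (the negative powers of $x_1$ and $x_{n-1}$ are harmless: $x_1$ appears to nonnegative powers since the exponent on $x_1$ is $a_{n-1}m_{n-1}-\sum_{i\ne n-1}a_im_i\ge 0$ in the support, and $x_{n-1}^{-m_{n-1}}$ is holomorphic at $x_{n-1}=\epsilon_{n-1}$); and these series are linearly independent as noted. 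The reverse inclusion, i.e. that there are no further solutions, is exactly the dimension count $a_{n-1}$ transported from \cite[Prop. 5.9]{fernandez-castro-dim2-2008} via Remark \ref{CK-gevrey}. The main obstacle I anticipate is the last identification step — verifying that the abstract isomorphism of Remark \ref{CK-gevrey} sends the plane $\Gamma$--series bases to the $\phi_{v^j,p}$ rather than merely counting dimensions — which requires tracking the restriction map on the level of $\Gamma$--series and the reindexing $j\leftrightarrow(i,j')$; everything else is either a direct computation of initial ideals (as in Lemma \ref{exponents-smooth-curve}) or a citation.
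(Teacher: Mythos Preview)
Your approach differs from the paper's and contains a genuine gap. The paper's proof is self-contained: it shows directly that at \emph{any} $p=(\epsilon_1,\ldots,\epsilon_{n-1},0)\in Y\setminus Z$ the dimension is bounded above by $a_{n-1}$, using initial ideals. Concretely, writing $E_p$ for the germ of $E$ at $p$, one has $\inww(E_p)=a_{n-1}\epsilon_{n-1}\partial_{n-1}$, and together with $\inw(I_A)=\langle\partial_2,\ldots,\partial_{n-2},\partial_1^{a_{n-1}},\partial_n\rangle$ this forces, via \cite[Th.~2.5.5]{SST}, that $\inw(f)$ lies in an $a_{n-1}$-dimensional space for any formal solution $f$. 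The lower bound then comes from checking that the $\phi_{v^j,p}$ land in $\cO_{\widehat{X|Y},p}$ (standard estimates) and are linearly independent (distinct initial terms).

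Your reduction to $p=(0,\ldots,0,\epsilon_{n-1},0)$ via ``translation invariance in the coordinates $x_1,\ldots,x_{n-2}$'' does not work: the Euler operator $E=\sum a_i x_i\partial_i-\beta$ is \emph{not} translation invariant in any $x_i$, so the solution germs at different points of $Y\setminus Z$ are not identified by a translation. The Cauchy--Kovalevskaya isomorphism of Remark~\ref{CK-gevrey} only computes the stalks on $X'=(x_1=\cdots=x_{n-2}=0)$, so by itself it cannot give the upper bound at general $p$. (Your citation of Theorem~\ref{teorext}~ii) is also misplaced: that statement concerns $\cQ_Y(s)$, not $\cO_{\widehat{X|Y}}$, and in any case the proposition only claims something for $p\in Y\setminus Z$.) You could try to rescue the argument by first proving that $\mathcal{E}xt^0(\HHAb,\cO_{\widehat{X|Y}})$ is locally constant on $Y\setminus Z$---this follows from the long exact sequence attached to $0\to\cO_{X|Y}\to\cO_{\widehat{X|Y}}\to\cQ_Y\to 0$ together with Kashiwara's constructibility for $\cO_{X|Y}$ and Mebkhout's perversity for $\cQ_Y$, plus the characteristic-cycle inclusion of Corollary~\ref{corciclo}---but that is considerably heavier machinery than the paper's direct initial-ideal count, and you did not invoke it. Finally, note that once the dimension is known to be $a_{n-1}$ and the $\phi_{v^j,p}$ are shown to be $a_{n-1}$ linearly independent elements of the space, your ``main obstacle'' (tracking the plane $\Gamma$--series through the restriction isomorphism) becomes unnecessary.
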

\begin{proof} {\em Step 1.-} Using \cite{GZK} and \cite{SST} we will
describe $a_{n-1}$ linearly independent solutions of
$\cM_A(\beta)_p$, living in some Nilsson series ring. Then, using
initial ideals, we will prove that an upper bound of the dimension
of $\cE xt_{}^0(\cM_A(\beta),\cO_{\widehat{X|Y}})_p$ is $a_{n-1}$
for $p$ in $Y\setminus Z$.


First of all, the series $$\{\phi_{v^{j}}\,\vert\, j=0,\ldots
,a_{n-1}-1\} \subset x^{v^j}\CC[[x_1^{\pm
1},x_2,\ldots,x_{n-2},x_{n-1}^{-1},x_n]]$$ described in Remark
\ref{remark-exponents-smooth-curve}, are linearly independent since
$\inw (\phi_{v^{j}})=x^{v^{j}}$ for $0\leq j\leq a_{n-1}-1$. They
are solutions of the system $\cM_A(\beta)$ (see \cite{GGZ},
\cite[Sec. 1]{GZK},\cite[Sec. 3.4]{SST}).

On the other hand \begin{align}\label{dim-menor-igual-an-1}
\dim_{\C}\mathcal{E}xt^0 (\HHAb , \cO_{\widehat{X|Y}})_p \leq
a_{n-1}\end{align} for $p=(\epsilon_1 ,\ldots , \epsilon_{n-1},0)$,
$\epsilon_{n-1}\neq 0$. This follows from the following facts: \\ a)
The initial ideal $\inw (I_A )$ equals  $\langle
\partial_2 ,\ldots ,\partial_{n-2},
\partial_1^{a_{n-1}},
\partial_n \rangle$  for  $\omega$ as in  Lemma \ref{exponents-smooth-curve}. \\ b) The germ of
$E$ at $p$ is nothing but $E_{p}:=E+\sum_{i=1}^{n-1} a_i \epsilon_i
\partial_i $ (here $a_1=1$) and satisfies $$\inww (E_p )=
a_{n-1}\epsilon_{n-1} \partial_{n-1}.$$ c) By \cite[Th. 2.5.5]{SST}
if $f\in \cO_{\widehat{X|Y},p} $ is a solution of the ideal
$H_A(\beta)$ then $\inw(f)$ must the annihilated by $\inww
(H_A(\beta))$. That proves inequality (\ref{dim-menor-igual-an-1}).

{{\em Step 2.-}} It is easy to prove, using standard estimates, that
the series $\phi_{v^{j},p}$ are in fact in $\cO_{\widehat{X|Y},p}$
for all  $p\in Y\setminus Z$. Then by step 1, they form a basis of
the  the vector space $\mathcal{E}xt^0 (\HHAb ,
\cO_{\widehat{X|Y}})_p $ for $p\in Y\setminus Z$.
\end{proof}

\begin{remark} We will prove (see Theorem \ref{ext0formal}) that the Gevrey index of $\phi_{v^j,p}$
is $a_n/a_{n-1}$ for $\beta\in \CC$ and $p\in Y\setminus Z$ except
for $\beta\in \NN$ and $j=q$ the unique integer $0\leq q \leq
a_{n-1}-1$ such that $\beta -q\in a_{n-1}\NN$. In that case
$\phi_{v^q}$ is a polynomial (see details in Remark
\ref{phi_w-bis}).
\end{remark}

\begin{proposition}\label{ext-j-convergent} Let $A=(1 \; a_2 \; \cdots \; a_n )$ be an
integer row matrix with $1<a_2 <\cdots < a_n $, $Y=(x_n=0)\subset
X$. Then we have: \begin{enumerate}
\item[i)]
For each $\beta \in \CC\setminus \NN$ we have $\mathcal{E}xt^j
(\HHAb ,\cO_{{X|Y}}) =0$ for all $j\in \NN$.
\item[ii)] For each $\beta \in \NN$ the sheaf $\mathcal{E}xt^j (\HHAb ,\cO_{{X|Y}})$
is locally constant on $Y$ of rank 1 for $j=0,1$ and it is zero for $j\geq 2$. \end{enumerate}
\end{proposition}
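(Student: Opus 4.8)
The plan is to compute $\mathcal{E}xt^j(\HHAb,\cO_{X|Y})$ by reducing to the holomorphic solutions and higher $\mathcal{E}xt$'s of $\cM_A(\beta)$ at points of $Y$, and to exploit that $\cO_{X|Y}$ is supported on $Y$ together with the known characteristic variety $T^*_XX\cup T^*_YX$ of $\cM_A(\beta)$. First I would observe that, because the characteristic variety of $\cM_A(\beta)$ meets $T^*_YX$ along the whole conormal, the restriction $\cM_A(\beta)_{|Y}$ is a holonomic $\cD_Y$-module by Kashiwara's restriction theorem; its de Rham / solution data governs $\mathbb{R}\cH om_{\cD_X}(\cM_A(\beta),\cO_{X|Y})$. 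Concretely, I would use the Cauchy--Kovalevskaya--type identification already invoked in Remark~\ref{CK-gevrey} (with $s=1$, i.e. for $\cO_{X|Y}=\cO_{\widehat{X|Y}}(1)$) to reduce the computation to the two-variable curve $A'=\frac1k(a_{n-1},a_n)$ via Corollary~\ref{sumadirectan}, and then to invoke the dimension-2 results of \cite{fernandez-castro-dim2-2008}. Since that reduction only controls the germ at points of $X'$, I would additionally use that all the $\mathcal{E}xt^j(\HHAb,\cO_{X|Y})$ are locally constant along the stratum $Y\setminus Z$ and along $Z\cap Y$ (this follows exactly as in the proof of Theorem~\ref{teorext}: perversity of $\Irr^{(1)}_Y$ is trivial, but here one uses constructibility of the solution complex of the holonomic module $\cM_A(\beta)$ with values in $\cO_{X|Y}$, via Kashiwara's constructibility theorem), so that it suffices to compute the rank at one point of each stratum, and the reduction to $X'$ supplies such a point.

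The key steps, in order, are: (1) record that $\cM_A(\beta)$ is holonomic with singular support $Y$, hence $\mathbb{R}\cH om_{\cD_X}(\cM_A(\beta),\cO_{X|Y})$ is a constructible complex on $Y$, locally constant on $Y\setminus Z$ and on $Z\cap Y$; (2) apply the quasi-isomorphism of Remark~\ref{CK-gevrey} at $s=1$ together with Corollary~\ref{sumadirectan} to get, for all but finitely many $\beta$ and then for all $\beta$ by Proposition~\ref{M-a-beta-isom}, $$\mathcal{E}xt^j_{\cD_X}(\HHAb,\cO_{X|Y})_p\;\cong\;\bigoplus_{i=0}^{k-1}\mathcal{E}xt^j_{\cD_{X'}}(\mathcal{M}_{A'}(\beta_i),\cO_{X'|Y'})_{p'}$$ for $p$ in the relevant stratum, with $\beta_i=\frac{\beta-i}{k}$ and $A'=\frac1k(a_{n-1},a_n)$; (3) quote the two-variable computation in \cite{fernandez-castro-dim2-2008}: for a monomial plane curve, the sheaf $\mathcal{E}xt^j(\mathcal{M}_{A'}(\gamma),\cO_{X'|Y'})$ vanishes for all $j$ when $\gamma\notin\NN A'$, and for $\gamma\in\NN A'$ it is rank $1$ for $j=0,1$ and zero for $j\geq2$ — this is where a convergent (polynomial) solution, namely $\phi_{v^q}$, and the corresponding $\mathcal{E}xt^1$, appear; (4) translate the condition "$\beta_i\in\NN A'$ for exactly one $i$" into "$\beta\in\NN$" for the original matrix $A=(1\,a_2\,\cdots\,a_n)$: indeed $\NN A=\NN$ since $a_1=1$, and $\beta\in\NN$ forces exactly one residue class $i\equiv\beta\pmod k$ with $\beta_i=\frac{\beta-i}{k}\in\NN=\NN A'$, while the other $k-1$ summands contribute nothing; if $\beta\notin\NN$ then no $\beta_i$ lies in $\ZZ$, so all summands vanish, giving (i). Finally (5) check that the computed rank is independent of the point in $Y$ (not just in $Y\setminus Z$): combine the local constancy on the two strata from step (1) with the fact that $\mathcal{E}xt^j(\HHAb,\cO_{X|Y})$ is the $s=1$ term of the Gevrey filtration of $\Irr^{(s)}_Y$ and use Theorem~\ref{teorext}(ii),(iv) together with the perverse gluing to see there is no jump across $Z\cap Y$ for $j=0,1$, obtaining a genuinely locally constant sheaf on all of $Y$.

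The main obstacle I expect is step (5): controlling the behaviour across the smaller stratum $Z\cap Y$. The reduction in Remark~\ref{CK-gevrey} is stated at the point $(0,\dots,0,\epsilon_{n-1},0)$, i.e. it only directly sees $Y\setminus Z$ (for $\epsilon_{n-1}\neq0$) and the single point of $Z\cap Y$ on $X'$ (for $\epsilon_{n-1}=0$); one must argue that the rank computed at that one point of $Z\cap Y$ via the $k$-fold direct sum of the two-variable answer agrees with the generic rank on $Y\setminus Z$, so that the sheaf is locally constant on all of $Y$ and not merely constructible with a possible jump. For the holomorphic coefficients $\cO_{X|Y}$ this is reasonable because the convergent solution $\phi_{v^q}$ (a polynomial when $\beta\in\NN$) and its companion $\mathcal{E}xt^1$-class extend across $x_{n-1}=0$ without degeneration — so the hard part is really verifying that the stalk map at $Z\cap Y$ is an isomorphism onto the nearby stalks, which one does by exhibiting the same rank-$1$ generator $\phi_{v^q,p}$ at every $p\in Y$ and checking it survives restriction. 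Once that is in hand, parts (i) and (ii) follow immediately from the two-variable results.
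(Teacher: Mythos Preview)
Your overall strategy --- reduce via Cauchy--Kovalevskaya and Corollary~\ref{sumadirectan} to the two-variable case and then cite \cite{fernandez-castro-dim2-2008} --- is exactly what the paper does. But you have over-complicated the argument in one place, and this makes your step~(5) both unnecessary and, as written, not clearly valid.

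The point you missed is this: the sheaf $\cO_{X|Y}$ is simply the restriction $\cO_X|_Y$, so $\mathcal{E}xt^j(\HHAb,\cO_{X|Y})=\mathcal{E}xt^j(\HHAb,\cO_X)|_Y$. Now the ordinary characteristic variety of $\cM_A(\beta)$ is $T^*_XX\cup T^*_YX$ --- there is \emph{no} $T^*_ZX$ component here (the $Z$ component only appears in the $s$-microcharacteristic variety used in Corollary~\ref{corciclo} for the Gevrey case). Hence Kashiwara's constructibility theorem, applied to the solution complex of $\cM_A(\beta)$ in $\cO_X$, yields directly that each $\mathcal{E}xt^j(\HHAb,\cO_X)$ is locally constant on the whole of $Y$, not merely on the two strata $Y\setminus Z$ and $Y\cap Z$ separately. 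So a single stalk computation at one point of $Y$ suffices, and your step~(5) --- with its appeal to perverse gluing and to Theorem~\ref{teorext} --- is superfluous. In particular you do not need to exhibit the generator $\phi_{v^q,p}$ at every $p\in Y$ and check it survives restriction.

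Two minor remarks on steps~(3)--(4). First, the dichotomy in \cite{fernandez-castro-dim2-2008} is between $\gamma\in\NN A'$ and $\gamma\notin\NN A'$, and $\NN A'$ is the numerical semigroup generated by $a_{n-1}/k$ and $a_n/k$, which is not $\NN$ in general; so your sentence ``$\beta_i\in\NN=\NN A'$'' is not literally correct. The paper handles this by first using Proposition~\ref{M-a-beta-isom} to replace $\beta$ by a suitable translate $\beta\pm m$ so that the generic formula $\beta_i=(\beta\pm m-i)/k$ in Corollary~\ref{sumadirectan} applies and lands the relevant $\beta_i$ in the right piece. Second, for $\beta\in\NN$ one must also observe (as the paper does, citing \cite{fernandez-castro-dim2-2008}) that the $k-1$ summands with non-integer $\beta_i$ contribute zero and only the single summand with $\beta_{i_0}\in\NN$ survives.
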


\begin{proof} For all  $\beta \in \C$ the
characteristic variety of $\HHAb$ is ${\rm Ch}(\HHAb )= T_X^{\ast} X
\cup T_Y^{\ast} X$ (see e.g.  \cite{Adolphson}). Then from
Kashiwara's constructibility Theorem \cite{kashiwara-overdet-75}
we have  that, for all $j\in \N$, the sheaf
\begin{align}
\mathcal{E}xt^j (\HHAb , \cO_X )_{|Y} = \mathcal{E}xt^j (\HHAb ,
\cO_{X|Y} )_{|Y} \label{hazloccte}
\end{align}
is locally constant.

Assume $\beta \notin \N$. From Corollary  \ref{sumadirectan} and
Proposition \ref{M-a-beta-isom} we have  that  there exists $m\in
\N$ such that $\HHAb \simeq \mathcal{M}_A (\beta - m)$ and
$$\HHAb_{|X'}\simeq \mathcal{M}_A (\beta - m)_{|X'}\simeq
\bigoplus_{i=0}^{k-1} \mathcal{M}_{(a'_{n-1} \; a'_n )}
\left(\frac{\beta - m -i}{k} \right) $$ with  $X'=(x_1 =\cdots =
x_{n-2}=0)\subset X$, $k=\operatorname{gcd}(a_{n-1}, a_n )$ and
$a'_{\ell}=\frac{a_{\ell}}{k}$ for $\ell=n-1,n$. Then by applying
Cauchy-Kovalevskaya Theorem (see Remark \ref{CK-gevrey}) we get:
$$\mathcal{E}xt^j (\mathcal{M}_A(\beta), \cO_{X|Y})_{|X'}
\simeq \bigoplus_{i=0}^{k-1}\mathcal{E}xt^j (\mathcal{M}_{(a'_{n-1}
\; a'_n )} \left(\frac{\beta - m -i}{k} \right) , \cO_{X' | Y'} )$$
with $Y'=X'\cap Y$.

As $\beta \notin \N$ then $\frac{\beta - m -i}{k} \notin a'_{n-1}\N
+a'_{n}\N$ for  $i=0 ,\ldots , k-1$. Then part {\em i)} follows from
\cite[Proposition 4.5]{fernandez-castro-dim2-2008}.

Assume now $\beta\in \NN$. From Corollary  \ref{sumadirectan} and
Proposition \ref{M-a-beta-isom} we have that  there exists $m\in \N$
such that $\HHAb \simeq \mathcal{M}_A (\beta + m)$ and
$$\HHAb_{|X'}\simeq \mathcal{M}_A (\beta + m)_{|X'}\simeq
\bigoplus_{i=0}^{k-1} \mathcal{M}_{(a'_{n-1} \; a'_n )}
\left(\frac{\beta + m -i}{k} \right).$$ Then by applying
Cauchy-Kovalevskaya Theorem (see Remark \ref{CK-gevrey}) we get:
$$\mathcal{E}xt^j (\mathcal{M}_A(\beta), \cO_{X|Y})_{|X'} \simeq
\bigoplus_{i=0}^{k-1}\mathcal{E}xt^j (\mathcal{M}_{(a'_{n-1} \; a'_n
)} \left(\frac{\beta + m -i}{k} \right) , \cO_{X' | Y'}).$$ By
\cite{fernandez-castro-dim2-2008} this last module is in fact equal
to $$\mathcal{E}xt^j (\mathcal{M}_{(a'_{n-1} \; a'_n )}
\left(\frac{\beta + m -i_0}{k} \right) , \cO_{X' | Y'})$$ where
$i_0$ is the unique integer number such that $0\leq i_0\leq k-1$ and
$\beta + m-i_0\in k \NN$. Then part {\em ii)} follows from
\cite[Proposition 4.6]{fernandez-castro-dim2-2008}.
%
\end{proof}

\begin{theorem}\label{ext0formal} Let $A=(1 \; a_2 \; \cdots \; a_n )$ be an
integer row matrix with $1<a_2 <\cdots < a_n $, $Y=(x_n=0)\subset X$
and $Z=(x_{n-1}=0)\subset X$. Then we have:
\begin{enumerate} \item[i)]
$$\mathcal{E}xt^0 (\HHAb ,\cO_{\widehat{X|Y}}(s))_p
= \bigoplus_{j=0}^{a_{n-1}-1} \CC  \phi_{v^{j},p}$$
for all  $\beta \in \CC$, $p\in Y\setminus Z$ and $s\geq a_n
/a_{n-1}.$
\item[ii)] $$\mathcal{E}xt^0 (\HHAb ,\cO_{\widehat{X|Y}}(s))_p =
\left\{\begin{array}{cc}
0 & \mbox{ if } \beta \notin \N \\
\C \phi_{v^q} & \mbox{ if  } \beta \in \N
\end{array}\right.$$
for all $p\in Y\setminus Z$ and $1\leq s < a_n /a_{n-1}$, where $q$
is the unique element in  $\{0,1,\ldots , a_{n-1} -1 \}$ satisfying
$\frac{\beta -q}{a_{n-1}} \in \N$ and $\phi_{v^q}$ is a polynomial.
\end{enumerate}
\end{theorem}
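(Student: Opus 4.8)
The strategy is to combine the structural results already established in this section, so the argument is essentially a bookkeeping exercise once the right inputs are identified. First, for part i), I would invoke Proposition \ref{propo-ext0formal}, which already identifies $\mathcal{E}xt^0 (\HHAb , \cO_{\widehat{X|Y}})_p$ with $\sum_{j=0}^{a_{n-1}-1}\CC\,\phi_{v^j,p}$ for $p\in Y\setminus Z$; the extra content for $s\geq a_n/a_{n-1}$ is twofold. On one hand one must check that each $\phi_{v^j,p}$ actually lies in $\cO_{\widehat{X|Y}}(s)_p$ for $s\geq a_n/a_{n-1}$, which follows from the explicit formula for $\Gamma[v^j;u(\mathbf{m})]$ and $x^{u(\mathbf{m})}$ recorded in Remark \ref{remark-exponents-smooth-curve}: the $x_n$-adic coefficients are products of factorials and Pochhammer symbols whose growth is governed by the exponent $a_{n-1}m_{n-1}$ against $a_n m_n$ in the denominator, so $\rho_s$ applied to the series converges precisely when $s\geq a_n/a_{n-1}$. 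On the other hand, one uses the dimension count: Theorem \ref{teorext}(iii) gives $\dim_\CC \mathcal{E}xt^0(\HHAb,\cQ_Y(s))_p = a_{n-1}$ for $s\geq a_n/a_{n-1}$ and $p\in Y\setminus Z$, and by Proposition \ref{ext-j-convergent}(i) (for $\beta\notin\NN$) or by the exact sequence \eqref{sucs} combined with Proposition \ref{ext-j-convergent}(ii) (for $\beta\in\NN$) one controls $\mathcal{E}xt^0(\HHAb,\cO_{X|Y})_p$, so that the long exact cohomology sequence attached to $0\to\cO_{X|Y}\to\cO_{\widehat{X|Y}}(s)\to\cQ_Y(s)\to 0$ forces $\dim_\CC \mathcal{E}xt^0(\HHAb,\cO_{\widehat{X|Y}}(s))_p = a_{n-1}$. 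Since the $a_{n-1}$ series $\phi_{v^j,p}$ are linearly independent (as $\inw(\phi_{v^j})=x^{v^j}$, noted in the proof of Proposition \ref{propo-ext0formal}) and all lie in this space, they form a basis; this proves i).

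For part ii), the range $1\leq s < a_n/a_{n-1}$, the key point is that by the Gevrey-index computation implicit in Remark \ref{remark-exponents-smooth-curve} none of the series $\phi_{v^j,p}$ is in $\cO_{\widehat{X|Y}}(s)_p$ for $s < a_n/a_{n-1}$ \emph{except} when the series is actually a polynomial. Concretely, $\phi_{v^j}$ is a polynomial if and only if the summation set $\{\mathbf{m}\geq 0 : \sum_{i\neq n-1}a_i m_i \leq j + a_{n-1}m_{n-1}\}$ is finite, which (inspecting the formula for $x^{u(\mathbf{m})}$, whose $x_{n-1}$-exponent is $-m_{n-1}$, forcing $m_{n-1}=0$ in $\cO_{\widehat{X|Y}}$ and hence a bounded index set) happens exactly when the Pochhammer factor $(\tfrac{\beta-j}{a_{n-1}})_{m_{n-1}}$ vanishes for $m_{n-1}\geq 1$, i.e. when $\tfrac{\beta-j}{a_{n-1}}\in\ZZ_{\leq 0}$; together with $\beta\in\NN$ this singles out the unique $q$ with $\tfrac{\beta-q}{a_{n-1}}\in\NN$. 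For $\beta\notin\NN$ no such polynomial appears, and combining this with Theorem \ref{teorext}(i), which gives $\mathcal{E}xt^0(\HHAb,\cQ_Y(s))=0$ for $1\leq s<a_n/a_{n-1}$, and with Proposition \ref{ext-j-convergent}(i), the exact sequence \eqref{sucs} yields $\mathcal{E}xt^0(\HHAb,\cO_{\widehat{X|Y}}(s))_p=0$. For $\beta\in\NN$, the same exact sequence together with Theorem \ref{teorext}(i) and Proposition \ref{ext-j-convergent}(ii) gives a one-dimensional space, which must be spanned by the convergent (indeed polynomial) solution $\phi_{v^q,p}$; one checks $\phi_{v^q}$ is annihilated by $H_A(\beta)$ since $v^q$ has minimal negative support (Lemma \ref{exponents-smooth-curve} and \cite[Prop. 3.4.13]{SST}), and since it is convergent it maps to $0$ in $\cQ_Y(s)$ but is nonzero in $\mathcal{E}xt^0(\HHAb,\cO_{\widehat{X|Y}}(s))_p$ — wait, rather it survives because the one-dimensional $\mathcal{E}xt^0(\HHAb,\cO_{X|Y})_p$ of Proposition \ref{ext-j-convergent}(ii) injects into $\mathcal{E}xt^0(\HHAb,\cO_{\widehat{X|Y}}(s))_p$ and is spanned by $\phi_{v^q,p}$.

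The step I expect to be the main obstacle is the precise Gevrey-order estimate for the series $\phi_{v^j,p}$: one must show both that $\phi_{v^j}\in\cO_{\widehat{X|Y}}(a_n/a_{n-1})_p$ and that its Gevrey index is exactly $a_n/a_{n-1}$ when it is not a polynomial. This requires Stirling-type asymptotics for the coefficients $\Gamma[v^j;u(\mathbf{m})]$ as functions of $\mathbf{m}\in\NN^{n-1}$, carefully tracking how the $x_n$-degree (which is $m_n$) compares with the total "size" of $\mathbf{m}$: the binomial $\partial_1^{a_{n-1}}-\text{(lower)}$ contribution means increasing $m_{n-1}$ costs $a_{n-1}$ units of $x_1$-degree while increasing $m_n$ costs $a_n$ units, and the ratio $a_n/a_{n-1}$ of these "costs" is exactly the Gevrey order. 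I would carry out this estimate directly from the explicit factorial formula in Remark \ref{remark-exponents-smooth-curve}; alternatively, and perhaps more cleanly, I would deduce it from the plane-curve case via the Cauchy–Kovalevskaya reduction of Remark \ref{CK-gevrey} together with \cite[Prop. 5.9]{fernandez-castro-dim2-2008}, which is how the dimension statements were obtained, reducing the Gevrey-index assertion to the already-settled two-dimensional situation in \cite{fernandez-castro-dim2-2008}. Once this is in hand, everything else is a diagram chase through the long exact sequence of \eqref{sucs}.
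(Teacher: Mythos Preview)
Your proposal is correct and follows the same overall route as the paper, but with one unnecessary detour. What you identify as the ``main obstacle''---the direct Gevrey estimate showing $\phi_{v^j,p}\in\cO_{\widehat{X|Y}}(a_n/a_{n-1})_p$---is entirely bypassed in the paper's proof of part~i). The paper argues purely by dimension: the inclusion $\cO_{\widehat{X|Y}}(s)\subset\cO_{\widehat{X|Y}}$ gives $\mathcal{E}xt^0(\HHAb,\cO_{\widehat{X|Y}}(s))_p\subset\mathcal{E}xt^0(\HHAb,\cO_{\widehat{X|Y}})_p$, the latter of dimension $a_{n-1}$ by Proposition~\ref{propo-ext0formal}. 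For $\beta\notin\NN$ the long exact sequence of~(\ref{sucs}), Theorem~\ref{teorext}(iii) and Proposition~\ref{ext-j-convergent}(i) give $\dim\mathcal{E}xt^0(\HHAb,\cO_{\widehat{X|Y}}(s))_p=a_{n-1}$ directly, forcing equality of the two spaces and hence the explicit basis. For $\beta\in\NN$ the five-term sequence $0\to\CC\to\cL_1\to\CC^{a_{n-1}}\to\CC\to\cL_2\to 0$ (with $\cL_i=\mathcal{E}xt^i(\HHAb,\cO_{\widehat{X|Y}}(s))_p$) only yields $\dim\cL_1=a_{n-1}+\dim\cL_2$; the paper then feeds in the upper bound $\dim\cL_1\leq a_{n-1}$ from the inclusion into the formal space to conclude $\dim\cL_1=a_{n-1}$ and $\cL_2=0$. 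Your exact-sequence step for $\beta\in\NN$ omits this upper bound and so, as stated, does not by itself pin down the dimension---but since you separately establish the Gevrey estimate, your argument still closes (indeed the estimate plus linear independence plus the upper bound from Proposition~\ref{propo-ext0formal} already gives the basis, making the long exact sequence redundant on your route). Part~ii) is handled identically in both: Theorem~\ref{teorext}(i) kills $\mathcal{E}xt^0(\HHAb,\cQ_Y(s))_p$ for $s<a_n/a_{n-1}$, and the long exact sequence with Proposition~\ref{ext-j-convergent} finishes. One small slip in your discussion: the falling Pochhammer $(\tfrac{\beta-j}{a_{n-1}})_{m_{n-1}}$ eventually vanishes when $\tfrac{\beta-j}{a_{n-1}}\in\NN$, not $\ZZ_{\leq 0}$.
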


\begin{proof} {\em i)} Let us consider $a_n/a_{n-1} \leq s \leq
\infty$ and $p\in Y\setminus Z$. Assume first that $\beta\not\in
\NN$. By Proposition \ref{ext-j-convergent} and the long exact
sequence of cohomology associated with the short exact sequence
(\ref{sucs}) we have that
$$\cE xt^0(\cM_A(\beta), \cO_{\widehat{X|Y}}(s))_p \simeq \cE
xt^0(\cM_A(\beta), \cQ_Y(s))_p
$$ and by Theorem \ref{teorext} this last vector space has dimension
$a_{n-1}$. As
$$\cE
xt^0(\cM_A(\beta), \cO_{\widehat{X|Y}}(s))_p \subset  \cE
xt^0(\cM_A(\beta), \cO_{\widehat{X|Y}})_p
$$ part {\em i)} follows from  Proposition
\ref{propo-ext0formal} if $\beta \not\in \NN$.

Assume now $\beta\in \NN$. Applying the long exact sequence of
cohomology associated with the short exact sequence (\ref{sucs}),
Theorem \ref{teorext} and Proposition \ref{ext-j-convergent} we get
an exact sequence of vector spaces
$$0 \rightarrow \CC \rightarrow \cL_1 \rightarrow \cE
xt^0(\cM_A(\beta), \cQ_Y(s))_p \rightarrow \CC \rightarrow \cL_2
\rightarrow 0$$ where $\cL_1 = \cE xt^0(\cM_A(\beta),
\cO_{\widehat{X|Y}}(s))_p$ and $\cL_2=\cE xt^1(\cM_A(\beta),
\cO_{\widehat{X|Y}}(s))_p$. Let us write $\nu_i=\dim_\CC(\cL_i)$. By
Theorem \ref{teorext} we also have $\nu_1=a_{n-1}+\nu_2$. On the
other hand, by Proposition \ref{propo-ext0formal}, we know that
$\nu_1 \leq a_{n-1}$. This implies $\nu_1=a_{n-1}$ and
$\cL_2=\{0\}$. In particular we have the equality
$$\cE
xt^0(\cM_A(\beta), \cO_{\widehat{X|Y}}(s))_p \subset  \cE
xt^0(\cM_A(\beta), \cO_{\widehat{X|Y}})_p
$$ part {\em i)} also follows from  Proposition
\ref{propo-ext0formal} if $\beta \in \NN$. \\ Let us prove part {\em
ii)}. First of all, by Theorem \ref{teorext}, $\cE
xt^j(\cM_A(\beta), \cQ_Y(s))_p=0$ for all $j\in \NN$. Then the
result follows from  the long exact sequence of cohomology
associated with the short exact sequence (\ref{sucs}) and
Proposition \ref{ext-j-convergent}.
\end{proof}

\begin{remark}\label{phi_w-bis}
Let us recall here the notations introduced in Lemma
\ref{exponents-smooth-curve}. For $A=(1 \; a_2 \; \cdots \; a_n )$
an integer row matrix with $1<a_2 <\cdots < a_n $ and $\omega \in
\R^n_{>0}$ satisfying
\begin{enumerate} \item $w_i
> a_i \omega_1$, for  $2\leq i \leq n-2$ or $i=n$ \item  $a_{n-1}\omega_1 >
\omega_{n-1} $ \item  $\omega_{n-1} > \omega_1 ,\ldots ,
\omega_{n-2}$\end{enumerate} we have proved (see Lemma
\ref{exponents-smooth-curve}) that $\HAb$ has $a_{n-1}$ exponents
with respect to  $\omega$ and that they have the form:
$$v^{j}=(j , 0 , \ldots , 0 ,\frac{\beta -j}{a_{n-1}} ,0)\in \C^n$$
$j=0,1, \ldots , a_{n-1} -1$.

The corresponding $\Gamma$--series $ \phi_{v^{j}}$ is defined as:

$$\phi_{v^{j}} = x^{v^j}
\sum_{\stackrel{m_2,\ldots, m_{n-1},m_n \geq 0}{ _{\sum_{i\neq n-1}
a_i m_i \leq j+a_{n-1}m_{n-1}}}} \Gamma[v^j; u({\bf m})] x^{u({\bf
m})}$$ for $j=0,1,\ldots ,a_{n-1}-1$, where for any ${\bf
m}=(m_2,\ldots,m_n)\in \ZZ^{n-1}$ we  denote $u({\bf
m}):=\sum_{i=2}^n m_i u^i\in L_A$.

For ${\bf m}=(m_2,\ldots,m_{n}) \in \NN^{n-1}$ such that
$j-\sum_{i\neq n-1} a_i m_i + a_{n-1} m_{n-1} \geq 0$, we have
$$\Gamma[v^j; u({\bf m})] = \frac{(\frac{\beta
-j}{a_{n-1}})_{m_{n-1}}j!} {m_2! \cdots m_{n-2}! m_n !
(j-\sum_{i\neq n-1} a_i m_i + a_{n-1} m_{n-1})!}$$ and
$$x^{u({\bf m})} = x_1^{-\sum_{i\neq n-1} a_i m_i + a_{n-1}
m_{n-1}} x_2^{m_2} \cdots x_{n-2}^{m_{n-2}} x_{n-1}^{-m_{n-1}}
x_n^{m_n}.$$

If $\beta\in \NN$ then there exists a unique $0\leq q \leq
a_{n-1}-1$ such that $\beta - q \in a_{n-1}\NN$. Let us write
$m_0=\frac{\beta -q}{a_{n-1}}$.

Then for $m\in \NN$ big enough $m_0-a_n m$ is a negative integer and
the coefficient ${\Gamma[v^q; u({\bf m})]}$ is zero and then
$\phi_{v^q}$ is a polynomial in $\CC[x]$.

Recall that $u^{n-1}=(a_{n-1},0,\ldots,-1,0)\in L_A$ and let us
write $$\widetilde{v^q} = v^q + (m_0+1)u^{n-1} =
(q+(m_0+1)a_{n-1},0,\ldots,0,-1,0)=(\beta +
a_{n-1},0,\ldots,0,-1,0).$$ We have $A\widetilde{v^q} = \beta$ an
the corresponding $\Gamma$--series is
$$\phi_{\widetilde{v^q}} = x^{\widetilde{v^q}} \sum_{{\bf m}\in M(q)}
\Gamma[\widetilde{v^q};u({\bf m})] x^{u({\bf m})} $$ where for ${\bf
m} = (m_2,\ldots,m_n)\in \ZZ^n$ one has $u({\bf m}) = \sum_{i=2}^n
m_i u^{i}$  and
$$M(q):=\{(m_2,\ldots,m_n)\in \NN^{n-1}\, \vert \, q+(m_0+m_{n-1}+1)a_{n-1} -\sum_{i\neq n-1} a_i m_i \geq 0\}.$$

Let us notice that $\widetilde{v^q}$ does not have minimal negative
support (see \cite[p. 132-133]{SST}) and then the $\Gamma$--series
$\phi_{\widetilde{v^q}}$ is not a solution of $H_A(\beta)$.  We will
prove in Theorem \ref{basis_of_ext_i_Q_s} that
$H_A(\beta)_p(\phi_{\widetilde{v^q},p}) \subset \cO_{X,p}$ for all
$p\in Y\setminus Z$ and that $\phi_{\widetilde{v^q},p}$ is a Gevrey
series of index $a_{n}/a_{n-1}$.
\end{remark}

The second main result of this Section is the following
\begin{theorem}\label{basis_of_ext_i_Q_s}
Let $A=(1 \; a_2 \; \cdots \; a_n )$ be an integer row matrix with
$1<a_2 <\cdots < a_n $, $Y=(x_n=0)\subset X$ and
$Z=(x_{n-1}=0)\subset X$. Then for all $p\in Y\setminus Z$ and
$s\geq a_n /a_{n-1}$ we have:
\begin{enumerate}
\item[i)] If $\beta \notin \N$, then:
$$\mathcal{E}xt^0 (\HHAb ,\cQ_Y (s))_p = \bigoplus_{j=0}^{a_{n-1}-1}
\C  \overline{\phi_{v^{j} , p}}.$$
\item[ii)] If  $\beta \in \N$, then there exists a unique
$q\in \{0,\ldots , a_{n-1} -1 \}$ such that $\frac{\beta -
q}{a_{n-1}}\in \N$
and we have:
$$\mathcal{E}xt^0 (\HHAb ,\cQ_Y (s))_p = \bigoplus_{q\neq j=0}^{a_{n-1}-1}
\C \overline{\phi_{v^{j},p }} \oplus  \C
\overline{\phi_{\widetilde{v^q},p}}.$$
\end{enumerate}
Here  $\overline {\phi}$ stands for the class modulo $\cO_{X|Y,p}$
of  $\phi \in \cO_{\widehat{X|Y} ,p}(s)$.
\end{theorem}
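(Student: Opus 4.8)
The strategy is to transport everything we already know about the \emph{formal} Gevrey cohomology sheaf $\cE xt^0(\HHAb,\cO_{\widehat{X|Y}}(s))$ (Theorem \ref{ext0formal}, part {\em i)}) to the quotient sheaf $\cQ_Y(s)$ by means of the long exact sequence of cohomology attached to the short exact sequence (\ref{sucs}). Fix $p\in Y\setminus Z$ and $s\geq a_n/a_{n-1}$. Applying $\RR\cH om_{\cD_X}(\HHAb,-)$ to (\ref{sucs}) and taking germs at $p$ gives a long exact sequence relating $\cE xt^j(\HHAb,\cO_{X|Y})_p$, $\cE xt^j(\HHAb,\cO_{\widehat{X|Y}}(s))_p$ and $\cE xt^j(\HHAb,\cQ_Y(s))_p$. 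By Theorem \ref{ext0formal}{\em i)} the middle term in degree $0$ is $\bigoplus_{j=0}^{a_{n-1}-1}\CC\phi_{v^j,p}$, of dimension $a_{n-1}$, and by Theorem \ref{teorext}{\em iv)} (together with the perversity argument already used) the higher $\cE xt$'s with values in $\cQ_Y(s)$ vanish, so the cohomology is concentrated in degree $0$ and has dimension $a_{n-1}$ by Theorem \ref{teorext}{\em iii)}. The map $\cE xt^0(\HHAb,\cO_{\widehat{X|Y}}(s))_p\to\cE xt^0(\HHAb,\cQ_Y(s))_p$ is $\phi\mapsto\overline\phi$, so the content of part {\em i)} is that, when $\beta\notin\NN$, this map is an \emph{isomorphism}; equivalently, that no nonzero $\CC$-linear combination of the $\phi_{v^j,p}$ is convergent at $p$, i.e.\ that all $a_{n-1}$ series are genuinely divergent (Gevrey index exactly $a_n/a_{n-1}$, never $1$). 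By Proposition \ref{ext-j-convergent}{\em i)}, for $\beta\notin\NN$ we have $\cE xt^j(\HHAb,\cO_{X|Y})=0$ for all $j$, so the connecting maps force the comparison map to be an isomorphism, and part {\em i)} follows at once.

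For part {\em ii)}, when $\beta\in\NN$, the sheaf $\cE xt^j(\HHAb,\cO_{X|Y})$ is no longer zero: by Proposition \ref{ext-j-convergent}{\em ii)} it is locally constant of rank $1$ for $j=0,1$. Running the same long exact sequence now yields
\begin{align*}
0\to\CC\to\cE xt^0(\HHAb,\cO_{\widehat{X|Y}}(s))_p\to\cE xt^0(\HHAb,\cQ_Y(s))_p\to\CC\to\cE xt^1(\HHAb,\cO_{\widehat{X|Y}}(s))_p\to\cdots
\end{align*}
and, as computed in the proof of Theorem \ref{ext0formal}{\em i)}, the last term vanishes and $\cE xt^0(\HHAb,\cO_{\widehat{X|Y}}(s))_p$ still has dimension $a_{n-1}$ and equals $\bigoplus_j\CC\phi_{v^j,p}$. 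Thus $\cE xt^0(\HHAb,\cQ_Y(s))_p$ is obtained from $\bigoplus_{j=0}^{a_{n-1}-1}\CC\phi_{v^j,p}$ by killing the one-dimensional image of $\cE xt^0(\HHAb,\cO_{X|Y})_p\hookrightarrow\cE xt^0(\HHAb,\cO_{\widehat{X|Y}}(s))_p$ and then adjoining a one-dimensional cokernel coming from $\cE xt^1(\HHAb,\cO_{X|Y})_p$. The convergent solution spanning that image is exactly the polynomial $\phi_{v^q}$ identified in Remark \ref{phi_w-bis} (the unique $\phi_{v^j}$ that terminates, since $\beta-q\in a_{n-1}\NN$). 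Hence $\overline{\phi_{v^q,p}}=0$ in $\cQ_Y(s)_p$, the classes $\overline{\phi_{v^j,p}}$ for $j\neq q$ are linearly independent, and they span a subspace of dimension $a_{n-1}-1$; the remaining one dimension must be filled by a class not coming from a genuine solution of $H_A(\beta)$. The plan is to show that this extra class is represented by $\overline{\phi_{\widetilde{v^q},p}}$, where $\widetilde{v^q}=v^q+(m_0+1)u^{n-1}$ as in Remark \ref{phi_w-bis}.

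The key step — and the main obstacle — is therefore the explicit verification that $\phi_{\widetilde{v^q},p}$ does the job: namely (a) $H_A(\beta)_p\bigl(\phi_{\widetilde{v^q},p}\bigr)\subset\cO_{X,p}$, so that $\overline{\phi_{\widetilde{v^q},p}}$ is a well-defined element of $\cE xt^0(\HHAb,\cQ_Y(s))_p$; (b) $\phi_{\widetilde{v^q},p}$ is a Gevrey series of index exactly $a_n/a_{n-1}$ at $p\in Y\setminus Z$, so it lives in $\cO_{\widehat{X|Y}}(s)_p$ for $s\geq a_n/a_{n-1}$ but not in $\cO_{X|Y,p}$; and (c) $\overline{\phi_{\widetilde{v^q},p}}$ is not in the span of the $\overline{\phi_{v^j,p}}$, $j\neq q$. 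For (a) one computes the action of the generators $E_p$, $\Box_u$ of $H_A(\beta)_p$ on the $\Gamma$-series $\phi_{\widetilde{v^q}}$ directly from the formulas for $\Gamma[\widetilde{v^q};u(\mathbf m)]$ in Remark \ref{phi_w-bis}; since $\widetilde{v^q}$ fails to have minimal negative support, the obstruction to being a solution is concentrated in the finitely many monomials $x^{\widetilde{v^q}+u(\mathbf m)}$ with nonnegative exponents, which assemble into a holomorphic (indeed polynomial) function — this is the crucial mechanism making the class well-defined. For (b), the standard Gevrey estimates on the coefficients (the same ones used to prove that the $\phi_{v^j,p}$ are formal of index $a_n/a_{n-1}$, cf.\ the proof of Proposition \ref{propo-ext0formal} and \cite{fernandez-castro-dim2-2008}) apply verbatim after the shift by $(m_0+1)u^{n-1}$, since that shift only displaces the summation range. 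For (c), one uses the distinct initial exponents: $\inw(\phi_{\widetilde{v^q}})=x^{\widetilde{v^q}}$ has a strictly negative $x_{n-1}$-exponent, whereas each $\inw(\phi_{v^j})=x^{v^j}$ with $j\neq q$ has $x_{n-1}$-exponent $\tfrac{\beta-j}{a_{n-1}}$, and these $a_{n-1}$ exponents are pairwise incongruent, so no cancellation in $\cQ_Y(s)_p$ can identify $\overline{\phi_{\widetilde{v^q},p}}$ with a combination of the others. Combining (a)–(c) with the dimension count $a_{n-1}$ from Theorem \ref{teorext}{\em iii)} forces the displayed direct-sum decomposition, completing the proof of part {\em ii)}. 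The $\CC$-linear independence and spanning in both parts can alternatively be checked against the basis produced by the isomorphism in Remark \ref{CK-gevrey} reducing to the plane-curve case of \cite{fernandez-castro-dim2-2008}, which is the cleanest way to avoid re-deriving the dimension.
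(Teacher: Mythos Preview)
Your overall architecture matches the paper's: the long exact sequence from (\ref{sucs}), the dimension bookkeeping using Theorem \ref{teorext} and Proposition \ref{ext-j-convergent}, the identification of $\phi_{v^q}$ as the polynomial generator of the kernel, and the plan to exhibit $\overline{\phi_{\widetilde{v^q},p}}$ as the missing basis vector. Part {\em i)} is fine and identical to the paper's argument.

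There are two points in part {\em ii)} where your plan diverges from what actually works.

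\medskip
\textbf{Step (a).} You write that the obstruction ``assemble[s] into a holomorphic (indeed polynomial) function''. In fact a direct computation (done in the paper) shows $E(\phi_{\widetilde{v^q}})=0$ and $P_i(\phi_{\widetilde{v^q}})=0$ for $i\neq n-1$, while $P_{n-1}(\phi_{\widetilde{v^q}})$ is a finite sum carrying a factor $x_{n-1}^{-1}$: it is meromorphic with a pole along $Z$, not a polynomial. This is harmless for the statement since $p\in Y\setminus Z$, but your description of the mechanism (``finitely many monomials with nonnegative exponents'') is not accurate.

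\medskip
\textbf{Step (c).} This is the real gap. Your argument reads: distinct initial exponents $\inw(\phi_{\widetilde{v^q}})=x^{\widetilde{v^q}}$ versus $\inw(\phi_{v^j})=x^{v^j}$, hence ``no cancellation in $\cQ_Y(s)_p$''. But distinct $\omega$--initial terms only give linear independence in the ambient Nilsson ring, not linear independence \emph{modulo} $\cO_{X|Y,p}$. At the germ level at $p$ (where a branch of $x_{n-1}^{(\beta-j)/a_{n-1}}$ has been fixed) the notion of $\omega$--initial term is no longer available, and nothing you wrote rules out a relation $\phi_{\widetilde{v^q},p}-\sum_{j\neq q}\lambda_j\phi_{v^j,p}\in \cO_{X|Y,p}$.

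The paper closes this gap with a monodromy argument, and your observation that the $x_{n-1}$--exponents are ``pairwise incongruent'' is exactly the germ of it, but you have to carry it out. One applies $\rho_s$ (for some $s>a_n$) to make all series holomorphic, then performs analytic continuation along a loop around $Z$ (i.e.\ $\log x_{n-1}\mapsto \log x_{n-1}+2\pi i$). Since $\widetilde{v^q}$ has integer $x_{n-1}$--coordinate, $\rho_s(\phi_{\widetilde{v^q}})$ is single--valued; each $\rho_s(\phi_{v^j})$ picks up a factor $c_j=e^{-2\pi i(\beta-j)/a_{n-1}}\neq 1$ (because $j\neq q$ means $(\beta-j)/a_{n-1}\notin\ZZ$); and a holomorphic germ goes to another holomorphic germ. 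Subtracting the identity before and after the loop yields $\sum_{j\neq q}(c_j-1)\lambda_j\phi_{v^j,p}\in\cO_{X|Y,p}$, contradicting the already established linear independence of the classes $\overline{\phi_{v^j,p}}$, $j\neq q$. Without this step your argument for (c) is incomplete.
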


\begin{proof} Part
{\em i)} follows from Theorem  \ref{ext0formal} and Proposition
\ref{ext-j-convergent} using the long exact sequence of cohomology.

Let us prove  {\em ii).}
Since  $\mathcal{E}xt^1 (\HHAb ,\cQ_Y (s))=0$ (see Theorem
\ref{teorext}) and applying Theorem \ref{ext0formal}, Proposition
\ref{ext-j-convergent}  and the long exact sequence in cohomology we
get that
$$\mathcal{E}xt^1 (\HHAb ,\cO_{\widehat{X|Y}} (s))_{|Y\setminus Z}$$ is zero for
$s\geq a_n /a_{n-1}$ and locally constant of rank 1 for $1\leq s<
a_n /a_{n-1}$. We also have that $$\mathcal{E}xt^1 (\HHAb
,\cO_{\widehat{X|Y}} (s))_{|Y\cap Z}$$ is locally constant of rank 1
for all $s\geq 1$.

Assume  $s\geq a_n /a_{n-1}$. We consider the following  long exact
sequence associated with the short exact sequence \ref{sucs} (with
$p\in  Y\setminus Z$ and $\cM=\cM_A(\beta)$) {\footnotesize
$$0\rightarrow \mathcal{E}xt^0 (\cM ,\cO_{X|Y} )_{p}\rightarrow
\mathcal{E}xt^0 (\cM ,\cO_{\widehat{X|Y}}(s) )_{p}
\stackrel{\rho}{\rightarrow} \mathcal{E}xt^0 (\cM ,\cQ_Y (s) )_{p}
\rightarrow \mathcal{E}xt^1 (\cM ,\cO_{X|Y})_{p} \rightarrow 0$$} We
also have
$$ \mathcal{E}xt^0 (\HHAb ,\cO_{X|Y} )_{p} \simeq \C$$
$$\mathcal{E}xt^0 (\HHAb ,\cO_{\widehat{X|Y}}(s) )_{p}\simeq
\C^{a_{n-1}}$$
$$\mathcal{E}xt^0 (\HHAb ,\cQ_Y (s) )_{p}\simeq \C^{a_{n-1}}$$
$$\mathcal{E}xt^1 (\HHAb ,\cO_{X|Y})_{p}\simeq \C$$

Since $\beta \in \N$ there exists a unique $q=0,1,\ldots , a_{n-1}
-1$ such that $\frac{\beta -q}{a_{n-1}}\in \N$  and then
$\phi_{v^q}\in \C [x]$ generates $\mathcal{E}xt^0 (\HHAb ,\cO_{X|Y}
)_{p}=\operatorname{Ker}(\rho )$.

Using
the above exact sequence and the first isomorphism theorem we get
that the family $$\{\overline{\phi_{v^j ,p }} : \; 0\leq j\leq
a_{n-1}-1, j\neq q \}$$  is linearly independent in $\cQ_Y (s)_p$
for all $p \in Y\setminus Z$.

In a similar way to the proof of Theorem \ref{teorext} it can be
proved that  ${\phi}_{\widetilde{v^{q}},p} \in
\cO_{\widehat{X|Y}}(s)_p $ for all $p\in Y\setminus Z$ and $s \geq
a_n/a_{n-1}$.

Writing $t_{n-1}=x_{n-1}^{-1}$ and defining:
$${\psi}_{\widetilde{v^q}}(x_1 ,\ldots ,x_{n-2} ,t_{n-1},x_n):=
x^{-\widetilde{v^q}}{\phi}_{\widetilde{v^q}}(x_1 ,\ldots ,x_{n-2}
,\frac{1}{t_{n-1}},x_n)
$$ we have that $${\psi}_{\widetilde{v^q}} \in \C [[x_1 ,\ldots , x_{n-2}
,t_{n-1}, x_n ]]$$

Taking the subsum of ${\psi}_{\widetilde{v^q}}$ for  $m_2=\cdots =
m_{n-2}=0 , \; m_n= a_{n-1} m, \;  m_{n-1}=a_n m, m\in \N$, we get
the power series
$$ \sum_{m\geq 0} c_m \left(t_{n-1}^{a_n} x_n^{a_{n-1}}\right)^m $$
where $$c_m= \frac{(-1)^{a_nm} (a_n m)!}{(a_{n-1}m)!}$$

This power series has Gevrey index $a_n /a_{n-1}$ with respect to
$x_n=0$. Then ${\phi_{\widetilde{v^q}}}$ has Gevrey index $a_n
/a_{n-1}$.

We have $E({\phi}_{\widetilde{v^q}})=P_i
({\phi}_{\widetilde{v^q}})=0$, for all $i=1,2,\ldots ,n-2,n$ and
$P_{n-1}({\phi}_{\widetilde{v^q}})$ is a meromorphic  function with
poles along $Z$ (and holomorphic on $X\setminus Z$):

$$P_{n-1} ({\phi}_{\widetilde{v^q}}) =
\sum_{\underline{m}\in \widetilde{M}(q)} \frac{(\beta + a_{n-1})!
x_1^{q-\sum_{i\neq n-1} a_i m_i + a_{n-1} (m_0 +1)} x_2^{m_2} \cdots
x_{n-2}^{m_{n-2}} x_{n-1}^{-1} x_n^{m_n}}{m_2! \cdots m_{n-2}! m_n !
(q-\sum_{i\neq n-1} a_i m_i + a_{n-1} (m_0 +1))!}$$ where
$$\widetilde{M}(q) =\{(m_2
,\ldots , m_{n-2} ,m_n) \in \N^{n-2} \vert  \; \sum a_i m_i \leq
q+a_{n-1}(m_0 +1)=\beta + a_{n-1} \}$$ is a finite set (recall that
$m_0 =\frac{\beta -q}{a_{n-1}}\in \N$).

In particular, $\HAb ({\phi}_{\widetilde{v^q}}) \subseteq
\cO_{X}(X\setminus Z)$.

So, $$\overline{{\phi}_{\widetilde{v^q},p}} \in \mathcal{E}xt^0
(\HHAb , \cQ_Y (s))_p$$ for all $p\in Y\setminus Z$ and $s\geq
a_n/a_{n-1}$.

In order to finish the proof we will see that for all $ \lambda_j
\in \C$ ($j=0,\ldots, a_{n-1}-1$; $j\not= q$) and for all  $ p\in
Y\setminus Z$ we have $${\phi}_{\widetilde{v^q}, p}-\sum_{j\neq q}
\lambda_j \phi_{v^j ,p} \notin \cO_{X|Y, p}.$$

Let us write $$\psi_{v^j}(x_1 ,\ldots ,x_{n-2} ,t_{n-1},x_n):=
\phi_{v^{j}}(x_1 ,\ldots ,x_{n-2} ,\frac{1}{t_{n-1}},x_n)  $$

Assume to the contrary that there exist $p\in Y\setminus Z$ and
$\lambda_j \in \C$ such that:
$${\phi}_{\widetilde{v^q}, p}-\sum_{j\neq q} \lambda_j \phi_{v^j ,p}
\in \cO_{X|Y, p}$$

Let us consider the holomorphic function   at $p$ defined as $$f:=
x^{\widetilde{v^q}}{\psi}_{\widetilde{v^q}, p}-\sum_{j\neq q}
\lambda_j \psi_{v^j ,p}$$

We have the following equality of holomorphic functions at $p$:
$$\rho_s (f +\sum_{j\neq q} \lambda_j \psi_{v^j}) = \rho_s (x^{\widetilde{v^q}}{\psi}_{\widetilde{v^q}})$$
for  $s> a_n$.

The function $ \rho_s (x^{\widetilde{v^q}}{\psi}_{\widetilde{v^q}})$
is holomorphic in $\C^n$ while each $\rho_s (\psi_{v^j})$ has the
form $t_{n-1}^{-\frac{\beta -j}{a_{n-1}}}\psi_j$ with $\psi_j$
holomorphic in $\C^n$.

Making a loop around the $t_{n-1}$ axis  ($\log t_{n-1} \mapsto \log
t_{n-1} +2\pi i $) we get the equality:

$$\rho_s (\widehat{f} +\sum_{j\neq q} c_j \lambda_j \psi_{v^j}) = \rho_s
(x^{\widetilde{v^q}}{\psi}_{\widetilde{v^q}})$$ where  $c_j
=e^{-\frac{\beta -j}{a_{n-1}}2\pi i}\neq 1$ (since $\frac{\beta
-j}{a_{n-1}}\notin \Z$  for all $j\neq q$) and  $\widehat{f}$ is
obtained from $f$ after the loop. Since $f$ is holomorphic at $p$
then $\widehat{f}$ also is.  Subtracting both equalities we get:

$$\rho_s (\widehat{f}-f +\sum_{j\neq q} (c_j -1) \lambda_j \psi_{v^j}) = 0
$$ and then
$$\sum_{j\neq q} (c_j -1) \lambda_j \psi_{v^j}= f - \widehat{f}
$$ in the neighborhood of $p$. This  contradicts the fact that the power series
$\{ \phi_{v^j}: \; j\neq q , 0\leq j\leq a_{n-1}-1 \}$ are linearly
independent modulo $\cO_{X|Y ,p }$ (here we have  $c_j -1 \neq 0$).
This proves the theorem.
\end{proof}

\begin{corollary}
If $\beta \in \N$ then for all $p\in Y\setminus Z$ the vector space
$\mathcal{E}xt^1 (\HHAb , \cO_{X|Y})_p$ is generated by the class
of: $$(P_2 ({\phi}_{\widetilde{v^q}}),\ldots , P_{n-1}
({\phi}_{\widetilde{v^q}}), P_n ({\phi}_{\widetilde{v^q}}),
E({\phi}_{\widetilde{v^q}}) ) = $$
$$= (0,\ldots ,0,\sum_{\underline{m}\in \widetilde{M}(q)}
\frac{(\beta + a_{n-1})!\, x_1^{q-\sum_{i\neq n-1} a_i m_i + a_{n-1}
(m_0 +1)} x_2^{m_2} \cdots x_{n-2}^{m_{n-2}} x_{n-1}^{-1}
x_n^{m_n}}{m_2! \cdots m_{n-2}! m_n ! (q-\sum_{i\neq n-1} a_i m_i +
a_{n-1} (m_0 +1))!},0 ,0)$$ in  $$\frac{(\cO_{X|Y})_p^{n}}{{\rm Im}
(\psi_0^{\ast} , \cO_{X|Y})_p}$$ where
$$\widetilde{M}(q) =\{(m_2 ,\ldots , m_{n-2} ,m_n
) \in \N^{n-2}  \; \vert \, \sum a_i m_i \leq q+a_{n-1}(m_0
+1)=\beta + a_{n-1} \}$$ is a finite set (with $m_0 =\frac{\beta
-q}{a_{n-1}}\in \N$) and $\psi_0^{\ast}$ being the dual map of
$$\begin{array}{rcl}
          \psi_0 : \cD^n  & \longrightarrow & \cD \\
           (Q_1 ,\ldots ,Q_n) & \mapsto & \sum_{j=2}^{n} Q_j P_j +
           Q_ n E
         \end{array}$$
\end{corollary}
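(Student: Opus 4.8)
The plan is to read off $\mathcal{E}xt^1_{\cD}(\HHAb,\cO_{X|Y})_p$ from a free presentation of $\HHAb$, to show that the displayed tuple is a cocycle representing the image of the Gevrey class $\overline{\phi_{\widetilde{v^q},p}}$ under the connecting morphism $\delta$ of the long exact sequence attached to (\ref{sucs}), and finally to use the dimension count already available in the proof of Theorem \ref{basis_of_ext_i_Q_s} to see that this image generates.

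First I would set up the presentation. Since $A=(1\;a_2\;\cdots\;a_n)$ with $a_1=1$, the toric ideal is $I_A=\langle P_i=\partial_1^{a_i}-\partial_i:\; i=2,\ldots,n\rangle$, so $\HAb=\cD(P_2,\ldots,P_n,E)$ and one has an exact sequence
$$\cD^{n_1}\stackrel{\psi_1}{\longrightarrow}\cD^{n}\stackrel{\psi_0}{\longrightarrow}\cD\longrightarrow\HHAb\longrightarrow 0,$$
where $\psi_0$ is the map of the statement, i.e. $(Q_1,\ldots,Q_n)\mapsto\sum_{i=1}^{n-1}Q_iP_{i+1}+Q_nE$, and $\psi_1$ is built from a generating set of the first syzygy module of $(L_1,\ldots,L_n):=(P_2,\ldots,P_n,E)$. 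Applying $\cH om_{\cD}(-,\cF)$ gives the complex $\cF\xrightarrow{\psi_0^{\ast}}\cF^n\xrightarrow{\psi_1^{\ast}}\cF^{n_1}$, so
$$\mathcal{E}xt^1_{\cD}(\HHAb,\cF)=\operatorname{Ker}(\psi_1^{\ast},\cF)\big/\operatorname{Im}(\psi_0^{\ast},\cF)\subseteq\coker(\psi_0^{\ast},\cF)=\cF^n\big/\operatorname{Im}(\psi_0^{\ast},\cF),$$
which for $\cF=\cO_{X|Y}$ (localized at $p$) is exactly the ambient space in the statement.

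Next I would check the tuple is a cocycle with the asserted value. By Remark \ref{phi_w-bis} and the computations in the proof of Theorem \ref{basis_of_ext_i_Q_s} we have $A\widetilde{v^q}=\beta$, $\phi_{\widetilde{v^q},p}\in\cO_{\widehat{X|Y}}(s)_p$ for $p\in Y\setminus Z$ and $s\geq a_n/a_{n-1}$, and $P_i(\phi_{\widetilde{v^q}})=0$ for $i\neq n-1$, $E(\phi_{\widetilde{v^q}})=0$, while $P_{n-1}(\phi_{\widetilde{v^q}})$ equals the finite sum written in the statement, which is holomorphic on $X\setminus Z$, hence in $\cO_{X|Y,p}$ when $p\notin Z$. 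Thus $(P_2\phi_{\widetilde{v^q}},\ldots,P_n\phi_{\widetilde{v^q}},E\phi_{\widetilde{v^q}})=(0,\ldots,0,P_{n-1}\phi_{\widetilde{v^q}},0,0)\in(\cO_{X|Y})_p^{\,n}$, and it lies in $\operatorname{Ker}(\psi_1^{\ast},\cO_{X|Y})_p$ because for every syzygy $\sum_jS_jL_j=0$ of $(L_j)_j$ one has $\sum_jS_j(L_j\phi_{\widetilde{v^q}})=\bigl(\sum_jS_jL_j\bigr)\phi_{\widetilde{v^q}}=0$. Hence its class $\xi$ is a well-defined element of $\mathcal{E}xt^1_{\cD}(\HHAb,\cO_{X|Y})_p$. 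Moreover, since $\HAb_p(\phi_{\widetilde{v^q},p})\subset\cO_{X|Y,p}$, the residue class $\overline{\phi_{\widetilde{v^q},p}}$ modulo $\cO_{X|Y,p}$ defines an element of $\mathcal{E}xt^0_{\cD}(\HHAb,\cQ_Y(s))_p$ for any fixed $s\geq a_n/a_{n-1}$, and the connecting homomorphism $\delta:\mathcal{E}xt^0_{\cD}(\HHAb,\cQ_Y(s))_p\to\mathcal{E}xt^1_{\cD}(\HHAb,\cO_{X|Y})_p$ of the long exact sequence of (\ref{sucs}), computed from the presentation above, sends a solution $\overline{\psi}$ to the class of $(L_j\psi)_j$ modulo $\operatorname{Im}(\psi_0^{\ast},\cO_{X|Y})_p$ for any lift $\psi\in\cO_{\widehat{X|Y}}(s)_p$; applying this to $\psi=\phi_{\widetilde{v^q},p}$ gives $\delta(\overline{\phi_{\widetilde{v^q},p}})=\xi$. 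Now I invoke the exact sequence obtained in the proof of Theorem \ref{basis_of_ext_i_Q_s},
$$0\to\mathcal{E}xt^0(\HHAb,\cO_{X|Y})_p\to\mathcal{E}xt^0(\HHAb,\cO_{\widehat{X|Y}}(s))_p\stackrel{\rho}{\to}\mathcal{E}xt^0(\HHAb,\cQ_Y(s))_p\stackrel{\delta}{\to}\mathcal{E}xt^1(\HHAb,\cO_{X|Y})_p\to 0,$$
together with $\mathcal{E}xt^1(\HHAb,\cO_{X|Y})_p\cong\CC$ (Proposition \ref{ext-j-convergent}(ii)) and Theorem \ref{basis_of_ext_i_Q_s}(ii): $\operatorname{Im}\rho=\operatorname{Ker}\delta$ is spanned by $\{\overline{\phi_{v^j,p}}:j\neq q\}$, whereas $\{\overline{\phi_{v^j,p}}:j\neq q\}\cup\{\overline{\phi_{\widetilde{v^q},p}}\}$ is a basis of $\mathcal{E}xt^0(\HHAb,\cQ_Y(s))_p$. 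Hence $\overline{\phi_{\widetilde{v^q},p}}\notin\operatorname{Ker}\delta$, so $\xi=\delta(\overline{\phi_{\widetilde{v^q},p}})\neq 0$ and therefore generates the one-dimensional space $\mathcal{E}xt^1_{\cD}(\HHAb,\cO_{X|Y})_p$, which is the assertion (the formula for the $(n-2)$-nd entry being $P_{n-1}(\phi_{\widetilde{v^q}})$ as recalled above).

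The step I expect to be the main obstacle is the explicit description of $\delta$ at the level of the free presentation, namely that $\delta$ takes a $\cQ_Y(s)$-solution $\overline{\psi}$ of $\HHAb$ to the class of $(L_j\psi)_j$ modulo $\operatorname{Im}(\psi_0^{\ast},\cO_{X|Y})_p$. This is a routine but bookkeeping-heavy diagram chase in the double complex obtained by applying $\cH om_{\cD}(\text{presentation of }\HHAb,-)$ to the short exact sequence (\ref{sucs}); everything else — the vanishing $P_i(\phi_{\widetilde{v^q}})=0$ for $i\neq n-1$ and $E(\phi_{\widetilde{v^q}})=0$, the explicit value of $P_{n-1}(\phi_{\widetilde{v^q}})$, and the dimension count — is already contained in Remark \ref{phi_w-bis}, Theorem \ref{basis_of_ext_i_Q_s} and Proposition \ref{ext-j-convergent}.
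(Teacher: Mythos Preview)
Your proof is correct and follows essentially the same approach as the paper's own argument: use the long exact sequence from (\ref{sucs}), the fact that $\mathcal{E}xt^1(\HHAb,\cO_{X|Y})_p\cong\CC$ from Proposition \ref{ext-j-convergent}(ii), and the computations from the proof of Theorem \ref{basis_of_ext_i_Q_s} to identify the image of $\overline{\phi_{\widetilde{v^q},p}}$ under the connecting map as the nonzero generator. The paper states this in two lines, simply observing that $(P_2(\phi_{v^j}),\ldots,P_n(\phi_{v^j}),E(\phi_{v^j}))=\underline{0}$ for $j\neq q$ (so these classes die under $\delta$) and invoking one-dimensionality; you have spelled out the underlying homological algebra --- the description of $\mathcal{E}xt^1$ via the presentation and the explicit formula for $\delta$ --- which the paper leaves implicit.
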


\begin{proof} It follows from the proof of Theorem
\ref{basis_of_ext_i_Q_s} since  $\mathcal{E}xt^1 (\HHAb
,\cO_{X|Y})_{p}\simeq \C$ for all  $p\in Y\setminus Z$ and moreover
$$(P_2 (\phi_{v^j}),\ldots , P_n
(\phi_{v^j}), E(\phi_{v^j}) )= \underline{0}$$ for  $0\leq j \leq
a_{n-1}-1$, $j\neq q$.
\end{proof}


\begin{remark}\label{sol_generic_point_1a2an} We can also compute the holomorphic solutions
of $\cM_A(\beta)$ at any point in $X\setminus Y$ for $A=(1 \; a_2 \;
\ldots \; a_n)$ with $1 < a_2 < \ldots < a_n$ and for any $\beta \in
\CC$, where $Y=(x_n=0)\subset X=\CC^n$. We consider the vectors
$w^{j} = (j,0,\ldots ,0, \frac{\beta - j}{a_n })\in \CC^{n}$,
$j=0,1,\ldots ,a_n -1$ then the germs at $p \in X \setminus Y $ of
the series solutions $\{ \phi_{w^{j}}:\; j=0,1,\ldots ,a_n -1\}$ is
a basis of $\cE xt^i_{\cD}(\cM_{A}(\beta),\cO_X)_p$.
\end{remark}

We have summarized the main results of this Section in Figure 1.
Here $A=(1\; a_2 \; \cdots \; a_n )$, $s\geq  \frac{a_n}{a_{n-1}}$,
$p\in Y\setminus Z$, $z\in Y \cap Z $, $\beta_{{\rm esp}}\in \N$ and
$\beta_{{\rm gen}}\notin \N$.

\begin{figure}[h]
\begin{center}
\begin{tabular}{||c|c|p{1.2cm}|p{1.2cm}|p{1.2cm}|p{1.2cm}||}
    \hline
   $(z,\beta_{{\rm esp}})$ & $(p,\beta_{{\rm esp}})$  & \multicolumn{2}{c|}{}  &  \multicolumn{2}{c||}{}  \\  \cline{1-2}
    $(z,\beta_{{\rm gen}})$ & $(p,\beta_{{\rm gen}})$  & \multicolumn{2}{c|}{$\mathcal{E}xt^0 (\HHAb ,\cF)$}  &  \multicolumn{2}{c||}{$\mathcal{E}xt^1 (\HHAb ,\cF) $}  \\ \cline{1-6}
    \multicolumn{2}{||c|}{\multirow{2}{3cm}{ $\cF=\cO_{X|Y}$}} & $\; \; \; \; 1$ & \multicolumn{1}{c|}{1} & $\; \; \; \; 1$ & \multicolumn{1}{c||}{1} \\ \cline{3-6}
    \multicolumn{2}{||c|}{} & $\; \; \; \; 0$ & \multicolumn{1}{c|}{0} & $\; \; \; \; 0$ & \multicolumn{1}{c||}{0} \\  \cline{1-6}
    \multicolumn{2}{||c|}{\multirow{2}{3cm}{ $\cF=\cO_{\widehat{X|Y}}(s)$}} & $\; \; \; \; 1$ & \multicolumn{1}{c|}{$a_{n-1}$} & $\; \; \; \; 1$ & \multicolumn{1}{c||}{0} \\ \cline{3-6}
    \multicolumn{2}{||c|}{} & $\; \; \; \; 0$ & \multicolumn{1}{c|}{$a_{n-1}$} & $\; \; \; \; 0$ & \multicolumn{1}{c||}{0} \\  \cline{1-6}
    \multicolumn{2}{||c|}{\multirow{2}{3cm}{ $\cF=\cQ_Y (s)$}}  & $\; \; \; \; 0$ & \multicolumn{1}{c|}{$a_{n-1}$} & $\; \; \; \; 0$ & \multicolumn{1}{c||}{0} \\ \cline{3-6}
    \multicolumn{2}{||c|}{} & $\; \; \; \; 0$ & \multicolumn{1}{c|}{$a_{n-1}$} & $\; \; \; \; 0$ & \multicolumn{1}{c||}{0} \\  \cline{1-6}
    \hline
  \end{tabular} \caption{Dimension of the germs of $\cE
  xt^i_{\cD_X}(\cM_A(\beta),\cF)$}
\end{center} \end{figure}

\section{The case of a monomial curve}\label{case_monomial_curve}
 Let $A=(a_1 \; a_2 \; \cdots \; a_n )$ be an integer row matrix with $1 < a_1 < a_2 < \cdots < a_n$ and
assume without loss of generality  ${\rm gcd}(a_1 ,\ldots ,a_n )=1$.

In this Section we will compute de dimension of the germs of the
cohomology of ${\rm Irr}^{(s)}_Y (\HHAb )$ at any point in  $Y=\{x_n
=0\}\subseteq X=\C^n$ for all $\beta \in \C$ and $1\leq s \leq
\infty$.

We will consider the auxiliary matrix  $A'= (1 \; a_1 \;  \cdots \;
a_n )$ and the corresponding hypergeometric ideal $H_{A'} (\beta
)\subset A_{n+1}$ where $A_{n+1}$ is the Weyl algebra of linear
differential operators with coefficients in the polynomial ring $\C
[x_0 ,x_1 ,\ldots , x_n]$. We denote $\partial_0$ the partial
derivative with respect to $x_0$.

In this Section we denote $X'=\CC^{n+1}$ and we identify $X=\CC^n$
with the hyperplane $(x_0=0)$ in $X'$. If $\cD_{X'}$ is the sheaf of
linear differential operators with holomorphic coefficients in $X'$
then  the analytic hypergeometric system associated with
$(A',\beta)$, denoted by $\cM_{A'}(\beta)$, is by definition the
quotient of $\cD_{X'}$ by the sheaf of ideals generated by the
hypergeometric ideal $H_{A'}(\beta) \subset A_{n+1}$ (see Section
\ref{GGZ-GKZ-systems}).

One of the main results in this Section is the following
\begin{theorem}\label{restriccioninversa}
Let $A'= (1 \; a_1 \;  \cdots \; a_n )$ an integer row matrix with
$1<a_1 <\cdots <a_n $ and  ${\rm gcd}(a_1 ,\ldots ,a_n )=1$. For
each  $\beta \in \C$ there exists $\beta'\in \CC$ such that the
restriction of $\mathcal{M}_{A'} (\beta )$ to $X=\{ x_0 = 0\}
\subset X'$ is the $\cD_X$--module
$$\cM_{A'}(\beta)_{|X}:=\frac{\cD_{X'}}{\cD_{X'} H_{A'}(\beta) + x_0 \cD_{X'}} \simeq
\cM_A (\beta ')$$ where $A= (a_1 \; a_2 \; \cdots \; a_n)$.
Moreover,  for all but finitely many  $\beta$ we have $\beta '
=\beta$.
\end{theorem}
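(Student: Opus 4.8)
The plan is to follow the strategy of the proof of Theorem \ref{sumadirecta}: compute the restriction of $\cM_{A'}(\beta)$ to $X=(x_0=0)$ directly, by means of the Oaku--Takayama restriction algorithm \cite[Algorithm 5.2.8]{SST}. As in that proof it is enough to treat all but finitely many $\beta$. Indeed $A'=(1\;a_1\;\cdots\;a_n)$ satisfies $0<1<a_1<\cdots<a_n$, so Proposition \ref{M-a-beta-isom} applies to it: each isomorphism class of modules $\cM_{A'}(\beta)$ --- one for $\beta\in\NN$, one for $\beta\in\ZZ_{<0}$, and one for each coset of $\ZZ$ in $\CC$ not contained in $\ZZ$ --- is infinite, and since $\mathcal{N}\mapsto\mathcal{N}/x_0\mathcal{N}$ is a functor, isomorphic $\cD_{X'}$--modules have isomorphic restrictions to $X$. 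Hence, once $\cM_{A'}(\beta)_{|X}\simeq\cM_A(\beta)$ is proved for every $\beta$ outside a finite set $T$, for $\beta\in T$ one picks $\beta'\notin T$ in the same isomorphism class and obtains $\cM_{A'}(\beta)_{|X}\simeq\cM_{A'}(\beta')_{|X}\simeq\cM_A(\beta')$; this yields the required $\beta'$, with $\beta'=\beta$ whenever $\beta\notin T$.

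For generic $\beta$ the computation would go as follows. Since $a_0=1$, the toric ideal is the complete intersection $I_{A'}=\langle\partial_0^{a_i}-\partial_i:\ i=1,\dots,n\rangle$, so $H_{A'}(\beta)$ is generated by those $n$ binomials and by $E=\theta_0+a_1\theta_1+\cdots+a_n\theta_n-\beta$; note that $I_{A'}\cap\CC[\partial_1,\dots,\partial_n]=I_A$, with $A=(a_1\;\cdots\;a_n)$. The key point is that the $b$--function of $H_{A'}(\beta)$ with respect to $\omega=(1,0,\dots,0)$ equals $b(\tau)=\tau$ for all but finitely many $\beta$ --- the analogue of Corollary \ref{bfuncion} with $k=\gcd(a_1,\dots,a_n)=1$, proved the same way by computing $\inww(H_{A'}(\beta))$ for generic $\beta$ via \cite[Th. 3.1.3]{SST} and intersecting with $\CC[\theta_0]$. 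Thus the largest integer root of $b$ is $r=0$, and the algorithm presents $\cM_{A'}(\beta)_{|X}$ as $\cD_X$ modulo the left ideal generated by the restrictions to $x_0=0$ of the elements of $\omega$--order $\le 0$ of a Groebner basis of $H_{A'}(\beta)$ with respect to $\omega$ (the analogue of Lemma \ref{BGdeHAb}). A monomial $x^\alpha\partial^\gamma$ has $\omega$--order $\gamma_0-\alpha_0$, so an operator of $\omega$--order $\le 0$ is a polynomial in $\theta_0=x_0\partial_0$ with coefficients in $\CC[x_1,\dots,x_n]\langle\partial_1,\dots,\partial_n\rangle$, and setting $x_0=0$ kills $\theta_0$, leaving only the part of $\theta_0$--degree $0$. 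The binomials $\partial_0^{a_i}-\partial_i$ and the extra Groebner basis element with initial part $\partial_0^k=\partial_0$ all have positive $\omega$--order and do not enter; the $\omega$--order $\le 0$ ones are $E$, whose $\theta_0$--degree $0$ part is $a_1\theta_1+\cdots+a_n\theta_n-\beta$, together with a Groebner basis of $I_A$. Hence the left ideal obtained is $A_n I_A + A_n(a_1\theta_1+\cdots+a_n\theta_n-\beta)=H_A(\beta)$, so $\cM_{A'}(\beta)_{|X}\simeq\cD_X/\cD_X H_A(\beta)=\cM_A(\beta)$, with no shift of $\beta$.

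I expect the main obstacle to be the explicit control of the Groebner basis of $H_{A'}(\beta)$ with respect to $\omega$ --- in particular the $b$--function computation --- and the verification that the left ideal produced after restriction is exactly $H_A(\beta)$ and not a proper super-ideal. This last point can be secured independently: by the direct computation recalled at the beginning of Section \ref{case-smooth-monomial-curve}, $x_0=0$ is non--characteristic for $\cM_{A'}(\beta)$, so by Cauchy--Kovalevskaya the restriction is holonomic of the same rank $a_n$ as $\cM_{A'}(\beta)$; as there is an evident surjection from $\cM_A(\beta)$ (modulo the relations $\partial_i=\partial_0^{a_i}$ and $E\equiv a_1\theta_1+\cdots+a_n\theta_n-\beta$, these hold for the image of $1$) onto $\cM_{A'}(\beta)_{|X}$, and $\cM_A(\beta)$ has generic rank $a_n$ and no submodule with strictly smaller support for generic $\beta$, this surjection is an isomorphism. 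The reduction via Proposition \ref{M-a-beta-isom} and the substitution $x_0=0$ are then routine.
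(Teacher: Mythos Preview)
Your strategy matches the paper's: Oaku--Takayama restriction with $\omega=(1,0,\ldots,0)$, $b(\tau)=\tau$ for generic $\beta$, identification of the $\omega$-order $\leq 0$ part of a Groebner basis with $E'$ plus generators of $I_A=I_{A'}\cap\CC[\partial_1,\ldots,\partial_n]$, and Proposition \ref{M-a-beta-isom} for the finitely many exceptional $\beta$. The point you gloss over --- ``proved the same way'' as Corollary \ref{bfuncion} --- is the one the paper actually works for: showing that $\partial_0$ itself, not merely $\partial_0^{a_1}$, lies in $\operatorname{fin}_\omega(H_{A'}(\beta))$. This is where the hypothesis $\gcd(a_1,\ldots,a_n)=1$ enters. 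For each $i$ the paper chooses the smallest $\delta_i\in\NN$ with $1+\delta_i a_i\in\sum_{j\neq i}a_j\NN$, producing operators $Q_i=\partial_0\partial_i^{\delta_i}-\partial^{\rho_i}\in I_{A'}$ with $\inww(Q_i)=\partial_0\partial_i^{\delta_i}\in\inw(I_{A'})$; combined with $\partial_0^{a_1}\in\inw(I_{A'})$ and $E'$, these force $\partial_0$ into the fake initial ideal. The analogy with Corollary \ref{bfuncion} alone is insufficient, since that proof is an explicit three-variable computation depending on the special shape $(1\;ka\;kb)$.

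Your Cauchy--Kovalevskaya safeguard has a gap. The map $\cM_A(\beta)\to\cM_{A'}(\beta)_{|X}$ sending $1$ to the class of $1$ lands in the cyclic $\cD_X$-submodule generated by that class, but you have not shown this submodule is all of the restriction: a priori $\cM_{A'}(\beta)_{|X}$ is generated over $\cD_X$ by the classes of $\partial_0^k$ for $k$ in the gaps of the numerical semigroup $\NN A$, and cyclicity is precisely what $b(\tau)=\tau$ delivers. So the alternative does not bypass the obstacle you identified; rank comparison only tells you the cokernel of your map has rank $0$, not that it vanishes.
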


\begin{proof} Following \cite{SST}, we will use the notations defined
in Notation \ref{notations_SST} and Definition \ref{def-b-function}.
For $i=1,2,\ldots ,n$ let us consider $\delta_i \in \N$ the smallest
integer satisfying $1+\delta_i a_i \in \sum_{j\neq i} a_j \N$. Such
a $\delta_i$ exists because ${\rm gcd}(a_1 ,\ldots ,a_n )=1$.

Let us consider  $\rho_{i j}\in \N$ such that
$$1+\delta_i a_i =\sum_{j\neq i} \rho_{ij} a_j.$$ Then the operator
$Q_{i}:=\partial_0 \partial_i^{\delta_i }- \partial^{\rho_i}$
belongs to $I_{A'}$ where $\partial^{\rho_i}=\prod_{j\neq 0,i}
\partial_j^{\rho_{i j}}$. Moreover, for $\omega =(1,0,\ldots ,0)\in \NN^{n+1}$ we have  $\inww (Q_i
)= \partial_0 \partial_i^{\delta_i } \in \inw I_{A'}$ for
$i=1,\ldots ,n$.

We also have that $P_1 =\partial_0^{a_1} -\partial_1 \in I_{A'}$ and
$\inww P_1 = \partial_0^{a_1}\in \inw I_{A'}$. Then
\begin{align}
\inw I_{A'} \supseteq \langle
\partial_0^{a_1},
\partial_0 \partial_1^{\delta_1 } \ldots ,  \partial_0 \partial_n^{\delta_n },
T_1 ,\ldots , T_r \rangle \label{inwAprima}
\end{align} for any binomial generating system $\{ T_1 ,\ldots ,T_r \}
\subseteq \C [\partial_1 ,\ldots ,\partial_n ]$  of the ideal  $I_A
= I_{A'} \cap \C [\partial_1 ,\ldots ,\partial_n ]$ (notice that
$u\in L_A=\ker_\ZZ(A)\subset \ZZ^n \Longleftrightarrow (0,u) \in
L_{A'}=\ker_\ZZ(A')\subset \ZZ^{n+1}$).

Using (\ref{inwAprima}) we can prove (similarly to the proof of
Lemma \ref{inwwHAb}  for $k=1$) that for $\beta \notin \N^{\ast}$ or
$\beta \in \N^{\ast}$ big enough, we have
\begin{align}
\partial_0 \in {\operatorname{fin}}_{\omega}(H_{A'}(\beta ) )= \inw I_{A'} + \langle E' \rangle \label{partial0}
\end{align} where $E'=E+ x_0 \partial_0$ and  $E:=E(\beta)=\sum_{i=1}^n a_i
x_i\partial_i-\beta$. So there exists $R\in H_{A'}(\beta)$ such that
$\partial_0=\inww (R)$. In particular we have
$$\langle H_A (\beta ), \partial_0 \rangle \subseteq
{\operatorname{fin}}_{\omega}(H_{A'}(\beta ) ) \subseteq \inww
(H_{A'}(\beta ))$$ and the $b$--function of $H_{A'}(\beta)$ with
respect to $\omega$ is $b(\tau)=\tau$. So the restriction of
$\mathcal{M}_{A'}(\beta)$ to  $(x_0 =0)$ is a cyclic $\cD_X$-module
(see \cite[Algorithm 5.2.8]{SST}).

In order to compute $\mathcal{M}_{A'}(\beta)_{|(x_0 =0)}$ we will
follow \cite[Algorithm 5.2.8]{SST}. First of all we need to describe
the form of a Groebner basis of $H_{A'}(\beta)$ with respect to
$\omega$. Let $\{T_1 ,\ldots , T_r , R_1 ,\ldots ,R_\ell \}$ be a
Groebner basis of $I_{A'}$ with respect to $\omega$. So we have
$$I_{A'}=\langle T_1 ,\ldots , T_r , R_1 ,\ldots ,R_\ell\rangle$$ and
$$\inw I_{A'} =\langle T_1 ,\ldots , T_r , \inww R_1 ,\ldots ,\inww
R_\ell \rangle .$$

If, for some $i=0,\ldots,\ell$,  the $\omega$-order of  $\inww R_i$
is $0$, then $\inww R_i = R_i \in I_{A'}\cap \C [\partial_1 ,\ldots
,\partial_n ]= I_A $ and then $\inww R_i = R_i \in \langle T_1
,\ldots ,T_r\rangle $.

If the $\omega$-order of  $\inww R_i$ is greater than or equal to
$1$, then $\partial_0 $ divide  $\inww R_i$. Then, according
(\ref{partial0}),  for  $\beta \notin \N^{\ast}$ or $\beta \in
\N^{\ast}$ big enough, we have
\begin{align}
{\operatorname{fin}}_{\omega}(H_{A'}(\beta ) )= \langle
\partial_0 , E , T_1 ,\ldots, T_r \rangle =
\langle \partial_0 \rangle + A_{n+1} H_{A}(\beta )\subseteq \inww
(H_{A'}(\beta )) \label{finprima}
\end{align}

From \cite[Th. 3.1.3]{SST}, for all but finitely many  $\beta\in\C$,
we have
\begin{align}
\inww (H_{A'}(\beta ))= \langle
\partial_0 , E , T_1 ,\ldots, T_r \rangle = \langle \partial_0 \rangle + A_{n+1} H_{A}(\beta ). \label{inprima}
\end{align}

So, for all but finitely many  $\beta\in \CC$, the set  $$\cG = \{R,
R_1 ,\ldots , R_\ell , E' , T_1 ,\ldots, T_r \}$$ is a Groebner
basis of $H_{A'}(\beta)$ with respect to $\omega$, since first of
all $\cG$ is a generating system of $H_{A'}(\beta)$ and on the other
hand $\inww (H_{A'}(\beta))= A_{n+1} \inww ( \cG )$.


We can now follow \cite[Algorithm 5.2.8]{SST}, as in the proof of
Theorem \ref{sumadirecta}, to prove the result for all but finitely
many $\beta\in \CC$. Then, to finish the proof it is enough to apply
Proposition \ref{M-a-beta-isom} for $A'$.
\end{proof}

\begin{remark}\label{restriccion-a-X} Recall that $Y=(x_n=0)\subset X=\CC^n$ and
$Z=(x_{n-1}=0)\subset X$. Let us denote  $Y'=\{x_n =0\}\subset X'$,
$Z'=\{x_{n-1}=0\}\subset X' $. Notice that  $Y=Y'\cap X$ and
$Z=Z'\cap X$.

By using Cauchy-Kovalevskaya Theorem for Gevrey series (see
\cite[Cor. 2.2.4]{Laurent-Mebkhout2}), \cite[Proposition
4.2]{Castro-Takayama} and Theorem \ref{restriccioninversa}, we get,
for all but finitely many  $\beta \in \C$ and for all $1 \leq s\leq
\infty$, the following isomorphism
$$\mathbb{R}\cH om_{\cD_{X'}} (\mathcal{M}_{A'}(\beta ) ,\cO_{\widehat{X'|Y'}} (s))_{|X} \stackrel{\simeq}
{\rightarrow} \mathbb{R}\cH om_{\cD_X} (\HHAb
,\cO_{\widehat{X|Y}}(s)).
$$
\end{remark}

We also have the following

\begin{theorem}\label{restriction1n}
Let  $A=(a_1 \; a_2 \; \cdots \; a_n )$ be an integer row matrix
with  $1<a_1 < a_2 <\cdots <a_n$ and ${\rm gcd}(a_1,\ldots,a_n)=1$.
Then for all $\beta\in \C$ we have
\begin{enumerate}
\item[i)] $\mathcal{E}xt^0_{\cD_X} (\HHAb , \cQ_Y (s))=0 $ for
$1\leq s < a_n / a_{n-1}$. \item[ii)] $\mathcal{E}xt^0_{\cD_X}
(\HHAb , \cQ_Y (s))_{|Y\cap Z}=0 $ for $1\leq s\leq \infty$.
\item[iii)] $\dim_{\C}(\mathcal{E}xt^0_{\cD_X} (\HHAb ,\cQ_Y (s))_p )=a_{n-1}$ for
$a_n /a_{n-1} \leq s\leq \infty$ and  $ p\in Y\setminus Z$.
\item[iv)]  $\mathcal{E}xt^i_{\cD_X} (\HHAb , \cQ_Y
(s))=0 $, for  $i \geq 1$ and $1\leq s\leq \infty$.
\end{enumerate}
Here $Y=(x_n=0)\subset \CC^n$ and $Z=(x_{n-1}=0)\subset \CC^n$.
\end{theorem}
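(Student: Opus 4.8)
The plan is to reduce Theorem~\ref{restriction1n} to the case of a smooth monomial curve, already settled in Theorem~\ref{teorext}, by using the restriction functor in $\cD$-module theory to pass from the auxiliary matrix $A'=(1\ a_1\ \cdots\ a_n)$ on $X'=\CC^{n+1}$ to $A=(a_1\ \cdots\ a_n)$ on the hyperplane $X=\{x_0=0\}\subset X'$. Write $Y'=\{x_n=0\}\subset X'$ and $Z'=\{x_{n-1}=0\}\subset X'$, so that $Y=Y'\cap X$, $Z=Z'\cap X$, $Y\setminus Z=(Y'\setminus Z')\cap X$ and $Y\cap Z=(Y'\cap Z')\cap X$. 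The matrix $A'$ is of the type treated in Theorem~\ref{teorext} (with $n+1$ entries in place of $n$), its last two entries being $a_{n-1}$ and $a_n$ and $Y'$ (resp. $Z'$) being the associated singular support (resp. the hyperplane of the second-to-last coordinate), so that Theorem~\ref{teorext} for $A'$ involves precisely the slope $a_n/a_{n-1}$ and the dimension $a_{n-1}$ appearing in the statement.

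Let $\Sigma\subset\CC$ be the finite set outside which Theorem~\ref{restriccioninversa} gives $\mathcal{M}_{A'}(\beta)_{|X}\simeq\HHAb$. For $\beta\notin\Sigma$, combining this isomorphism with the Cauchy--Kovalevskaya theorem for Gevrey series gives, as in Remark~\ref{restriccion-a-X}, a quasi-isomorphism $\mathbb{R}\cH om_{\cD_{X'}}(\mathcal{M}_{A'}(\beta),\cO_{\widehat{X'|Y'}}(s))_{|X}\stackrel{\simeq}{\longrightarrow}\mathbb{R}\cH om_{\cD_X}(\HHAb,\cO_{\widehat{X|Y}}(s))$ for every $1\leq s\leq\infty$; by the same argument as in Remark~\ref{CK-gevrey} (passing to the long exact cohomology sequences attached to (\ref{sucs}) for $Y'$ and for $Y$, and using also the case $s=1$), the analogous quasi-isomorphism holds with $\cQ_{Y'}(s)$ and $\cQ_Y(s)$ in place of $\cO_{\widehat{X'|Y'}}(s)$ and $\cO_{\widehat{X|Y}}(s)$. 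Taking cohomology sheaves and germs at a point $p\in Y$, regarded as a point of $Y'$ lying on $X$, gives $\mathcal{E}xt^i_{\cD_X}(\HHAb,\cQ_Y(s))_p\simeq\mathcal{E}xt^i_{\cD_{X'}}(\mathcal{M}_{A'}(\beta),\cQ_{Y'}(s))_p$ for all $i\in\NN$ and $1\leq s\leq\infty$; Theorem~\ref{teorext} applied to $A'$ then yields i)--iv) for $\HHAb$ whenever $\beta\notin\Sigma$.

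For $\beta\in\Sigma$, I would invoke Proposition~\ref{M-a-beta-isom} (applicable since $0<a_1<\cdots<a_n$) to choose $\widetilde\beta\notin\Sigma$ with $\HHAb\simeq\mathcal{M}_A(\widetilde\beta)$: if $\beta\notin\ZZ$, take $\widetilde\beta$ with $\widetilde\beta-\beta\in\ZZ$ (case iii); if $\beta\in\NN A$, take $\widetilde\beta\in\NN A$ (case i, possible since $\NN A$ is infinite); and if $\beta\in\ZZ\setminus\NN A$, take a sufficiently negative integer $\widetilde\beta$ (case ii, valid since $\NN A\subset\NN$). In each case infinitely many such $\widetilde\beta$ are available, so one can be chosen outside $\Sigma$; then $\Irr^{(s)}_Y(\HHAb)\simeq\Irr^{(s)}_Y(\mathcal{M}_A(\widetilde\beta))$, and i)--iv) for $\widetilde\beta$ imply them for $\beta$.

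The combinatorics is routine; the step requiring real care is the reduction itself, namely verifying that the restriction isomorphism of Remark~\ref{restriccion-a-X} is compatible with the Gevrey filtration and descends from $\cO_{\widehat{X'|Y'}}(s)$ to the quotient sheaf $\cQ_{Y'}(s)$. This is obtained by passing to the long exact cohomology sequences attached to (\ref{sucs}) together with the convergent and Gevrey Cauchy--Kovalevskaya theorems, exactly as in the smooth case in Remark~\ref{CK-gevrey}; I expect this bookkeeping, rather than any new idea, to be the main obstacle. All the deep input---perversity of $\Irr^{(s)}_Y$, Kashiwara's constructibility theorem, and the two-dimensional results of \cite{fernandez-castro-dim2-2008}---enters only through Theorem~\ref{teorext}, so nothing else is needed.
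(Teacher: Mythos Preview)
Your proposal is correct and follows exactly the paper's approach: the paper's proof reads in full ``It follows from Remark~\ref{restriccion-a-X}, Theorem~\ref{teorext} and Proposition~\ref{M-a-beta-isom},'' and you have simply unpacked these three ingredients (restriction to $X=\{x_0=0\}$ via Theorem~\ref{restriccioninversa} and Cauchy--Kovalevskaya for Gevrey series, application of Theorem~\ref{teorext} to $A'$, and the isomorphism-class argument for the finitely many exceptional $\beta$). Your explicit case analysis for choosing $\widetilde\beta\notin\Sigma$ and your remark on descending the quasi-isomorphism from $\cO_{\widehat{X'|Y'}}(s)$ to $\cQ_{Y'}(s)$ are details the paper leaves implicit, but the strategy is identical.
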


\begin{proof}
It follows from Remark \ref{restriccion-a-X}, Theorem \ref{teorext}
and Proposition \ref{M-a-beta-isom}.
\end{proof}



\begin{remark}\label{restricsolutions}
With the notations of Theorem \ref{restriction1n}, a basis of the
$\CC$--vector space $\cE xt^0_{\cD}(\cM_A(\beta),\cQ_Y(s))_p$ for
any $\frac{a_n}{a_{n-1}}\leq s\leq \infty$, $p\in Y\setminus Z$ and
$\beta \in \CC$ is given by the "substitution" (in a sense to be
precised) of $x_0 =0$ in the basis of $\cE xt^0_{\cD
'}(\cM_{A'}(\beta),\cQ_{Y'}(s))_{(0,p)}$ described in Theorem
\ref{basis_of_ext_i_Q_s}.

Remind that for $A'=(1\; a_1 \; \ldots \; a_n)$ and $\beta \in \CC$
the $\Gamma$--series described in Section
\ref{case-smooth-monomial-curve} are

$$\phi_{v^{j}} = (x')^{v^j}
\sum_{\stackrel{m_1,\ldots, m_{n-1},m_n \geq 0}{ _{\sum_{i\neq n-1}
a_i m_i \leq j+a_{n-1}m_{n-1}}}} \Gamma[v^j; u({\bf m})]
(x')^{u({\bf m})}$$ where $x'=(x_0,x_1,\ldots,x_n)$, \,
$v^j=(j,0,\ldots,0,\frac{\beta-j}{a_{n-1}}, 0)\in \CC^{n+1}$ for
$j=0,1,\ldots ,a_{n-1}-1$ and for ${\bf m} = (m_1,\ldots,m_n)\in
\ZZ^n$ we have
$$(x')^{u({\bf m})} = x_0^{-\sum_{i\neq n-1} a_i m_i+
a_{n-1}m_{n-1}}x_1^{m_1}\cdots x_{n-2}^{m_{n-2}} x_{n-1}^{-m_{n-1}}
x_n^{m_n}.
$$

For ${\bf m}=(m_1,\ldots,m_{n}) \in \NN^{n}$ such that
$j-\sum_{i\neq n-1} a_i m_i + a_{n-1} m_{n-1} \geq 0$ we have
$$\Gamma[v^j; u({\bf m})] = \frac{(\frac{\beta
-j}{a_{n-1}})_{m_{n-1}}\,j!} {m_1! \cdots m_{n-2}! m_n !
(j-\sum_{i\neq n-1} a_i m_i + a_{n-1} m_{n-1})!}.$$

Since $I_A = I_{A'} \cap \C [\partial_1 ,\ldots ,\partial_n ]$, if
$I_{A'}(f)=0$ then $I_{A}(f_{|x_{0}=0})=0$ for every formal power
series $f \in \cO_{\widehat{X ' |Y '} , p '}$ where $p'=(0,p) \in
Y'\cap \{x_0 =0\}=Y$.

Furthermore, a Laurent monomial $(x')^{w'}$ is annihilated by the
Euler operator associated with $(A',\beta)$ if and only if
$A'w'=\beta$ and after the substitution $x_0 =0$ this monomial
becomes zero or $x^w$ (in the case $w ' =(0 ,w )$) which are both
annihilated by the Euler operator associated with $(A,\beta)$, since
$A w = A' w'= \beta $ in the case $w ' =(0 ,w)$.

Hence, for $p\in Y$, every formal series solution $f \in
\cO_{\widehat{X ' |Y '} , (0,p)}$ of $\mathcal{M}_{A' }(\beta )$
becomes, after the substitution $x_0 =0$, a formal series solution
$f_{| x_0 =0} \in \cO_{\widehat{X |Y} , p}$ of $\mathcal{M}_{A
}(\beta )$. The analogous result is also true for convergent series
solutions at a point of $x_0 =0$.

After the substitution $x_0 =0$ in the series $\phi_{v^j}$ we get
$$\phi_{v^{j}|x_0 =0} =
\sum_{\stackrel{m_1,\ldots, m_{n-1},m_n \geq 0}{ _{\sum a_i m_i =
j+a_{n-1}m_{n-1}}}} \frac{(\frac{\beta -j}{a_{n-1}})_{m_{n-1}}j!
x_1^{m_1} \cdots x_{n-2}^{m_{n-2}} x_{n-1}^{\frac{\beta
-j}{a_{n-1}}-m_{n-1}} x_n^{m_n}}{m_1! \cdots m_{n-2}! m_n !}$$ for
$j=0,1,\ldots ,a_{n-1}-1$.

The summation before is taken over the set
$$\Delta_j:=\{(m_1,\ldots ,m_n ) \in \N^n : \sum_{i\not= n-1} a_i m_i = j+a_{n-1}m_{n-1} \}.$$
It is clear that  $(0,\ldots ,0)\in \Delta_0$ and for  $j\geq 1$,
$\Delta_j$ is a non empty set since ${\rm gcd}(a_1 ,\ldots ,a_n)=1$.
Moreover $\Delta_j$ is a countably infinite set for $j\geq 0$. To
this end take  some
%
${\underline{\lambda}}:=(\lambda_1 ,\ldots,\lambda_n )\in \Delta_j$.
Then $\underline{\lambda} + \mu(0,\ldots,0,a_n,a_{n-1})$ is also in
$\Delta_j$ for all $\mu \in \NN$.

The series  $\phi_{v^{j}|x_0 =0}$ is a Gevrey series of order $s=
\frac{a_n}{a_{n-1}}$ since  $\phi_{v^{j}}$ also is. We will see that
in fact the Gevrey index of $\phi_{v^{j}|x_0 =0}$ is
$\frac{a_n}{a_{n-1}}$ for $j=0,\ldots ,a_{n-1}-1$ such that
$\frac{\beta -j}{a_{n-1}}\not\in \N$. To this end let us consider
the subsum of $\phi_{v^{j}|x_0 =0}$ over the set of $(m_1,\ldots
,m_n )\in \NN^n$ of the form  ${\underline{\lambda}}^{(j)}+\N
(0,\ldots ,0,a_n ,a_{n-1})$ for some fixed
${\underline{\lambda}}^{(j)}\in \Delta_j$. Then  we get the series:

$$\frac{j! (\frac{\beta
-j}{a_{n-1}})_{\lambda_{n-1}^{(j)}} x_1^{\lambda^{(j)}_1} \cdots
x_{n-2}^{\lambda^{(j)}_{n-2}} x_{n-1}^{\frac{\beta
-j}{a_{n-1}}-{\lambda^{(j)}_{n-1}}}}{\lambda^{(j)}_1 ! \cdots
\lambda^{(j)}_{n-2}! }\sum_{m\geq 0} \frac{(\frac{\beta
-j}{a_{n-1}}-\lambda_{n-1}^{(j)})_{a_n m} x_{n-1}^{-a_n m}
x_n^{{\lambda^{(j)}_{n}}+ a_{n-1} m}}{(\lambda^{(j)}_n +a_{n-1} m )
!}$$ and it can be proven, by using d'Alembert ratio test, that its
Gevrey index equals $\frac{a_n}{a_{n-1}}$ at any point in
$Y\setminus Z$, for any $j=0,\ldots,a_{n-1}-1$ such that
$\frac{\beta -j}{a_{n-1}}\not\in \N$.

For all $j=0,\ldots ,a_{n-1}-1$ we have
$$\phi_{v^{j}|x_0 =0} \in x_{n-1}^{\frac{\beta -j}{a_{n-1}}}\C [[x_1 ,\ldots ,x_{n-2} ,x_{n-1}^{-1}, x_n
]]$$ and in particular these $a_{n-1}$ series are linearly
independent  and hence a basis of $\mathcal{E}xt^{0} (\mathcal{M}_A
(\beta ), \cO_{\widehat{X|Y}} (s))_p $ for $s\geq a_n /a_{n-1}$ and
$p\in Y \setminus Z$) if all of them are nonzero (see Theorem
\ref{ext0formal}).

If there exists $0\leq j \leq a_{n-1}-1$ such that $\phi_{v^{j}|x_0
=0}=0$ then we have that $\phi_{v^{j}}$ is a polynomial divisible by
$x_0$ and this happens if and only if $\beta \in \N \setminus \N A$.

In this last case we do not get a basis of $\mathcal{E}xt^{0}
(\mathcal{M}_A (\beta ), \cO_{\widehat{X|Y}} (s))_p $ by the
previous procedure. We will proceed as follows. Let us consider
$w'=(0,\omega)\in \N^{n +1}$ such that $\beta ' := \beta - A' w '
\in \Z_{< 0}$.  Then taking the basis $\{\phi_{v^j}\, \vert \,
j=0,\ldots, a_{n-1}-1\}$ of $\mathcal{E}xt^{0} (\mathcal{M}_{A'}
(\beta'), \cO_{\widehat{X'|Y'}} (s))_{(0,p)}$ given by Theorem
\ref{ext0formal} and after the substitution $x_0 =0$, we get a basis
of $\mathcal{E}xt^{0} (\mathcal{M}_A (\beta ' ), \cO_{\widehat{X|Y}}
(s))_p $ for $s\geq a_n /a_{n-1}$ and $p\in Y \setminus Z$.

Since $\beta , \beta ' \in \Z \setminus \N A$ then $\partial^{w}:
\mathcal{M}_{A}(\beta ') \rightarrow \mathcal{M}_{A}(\beta )$ is an
isomorphism (see \cite[Remark 3.6]{schulze-walther-GM-07} and
\cite[Lemma 6.2]{Berkesch}) and we can use this isomorphism  to
obtain a basis of $\mathcal{E}xt^{0} (\mathcal{M}_A (\beta ),
\cO_{\widehat{X|Y}} (s))_p.$


%
%

Using previous discussion and similar ideas to the ones of Section
\ref{case-smooth-monomial-curve} (we will use the notations therein)
we can prove the  following Theorem.
\end{remark}

\begin{theorem}\label{basis_ext_0_a1_an}
Let $A=(a_1 \; a_2 \; \cdots \; a_n )$ be an integer row matrix with
$0<a_1<a_2 <\cdots < a_n $, $Y=(x_n=0)\subset X$ and
$Z=(x_{n-1}=0)\subset X$. Then for all $p\in Y\setminus Z$, $\beta
\in \CC$  and $s\geq a_n /a_{n-1}$ we have:
\begin{enumerate}
\item[i)] If $\beta \notin \N$, then:
$$\mathcal{E}xt^0 (\HHAb ,\cQ_Y (s))_p = \bigoplus_{j=0}^{a_{n-1}-1}
\C \overline{(\phi_{v^{j}|x_0 =0})_p}.$$
\item[ii)] If  $\beta \in \N$, then there exists a unique
$q\in \{0,\ldots , a_{n-1} -1 \}$ such that $\frac{\beta -
q}{a_{n-1}}\in \N$
and we have:
$$\mathcal{E}xt^0 (\HHAb ,\cQ_Y (s))_p = \bigoplus_{q\neq j=0}^{a_{n-1}-1}
\CC \overline{(\phi_{v^{j}|x_0 =0})_p} \oplus
\CC\overline{(\phi_{\widetilde{v^q}\vert x_0=0})_p}.$$
\end{enumerate}
Here  $\overline {\phi}$ stands for the class modulo $\cO_{X|Y,p}$
of  $\phi \in \cO_{\widehat{X|Y} ,p}(s)$.
\end{theorem}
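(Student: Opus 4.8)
The plan is to reduce this statement, via restriction to the hyperplane $(x_0=0)$, to the smooth-curve case already settled in Theorem \ref{basis_of_ext_i_Q_s}, using the dimension count provided by Theorem \ref{restriction1n}. First I would invoke Theorem \ref{restriction1n}: for $p\in Y\setminus Z$ and $s\geq a_n/a_{n-1}$ one has $\dim_{\CC}\mathcal{E}xt^0_{\cD_X}(\HHAb,\cQ_Y(s))_p=a_{n-1}$ and $\mathcal{E}xt^i_{\cD_X}(\HHAb,\cQ_Y(s))=0$ for $i\geq 1$, and this holds for \emph{every} $\beta\in\CC$. Hence it is enough to produce $a_{n-1}$ elements of $\mathcal{E}xt^0_{\cD_X}(\HHAb,\cQ_Y(s))_p$ that are linearly independent modulo $\cO_{X|Y,p}$.

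The second step is to transport the basis known for the auxiliary matrix $A'=(1\;a_1\;\cdots\;a_n)$. For all but finitely many $\beta$, Theorem \ref{restriccioninversa} gives $\mathcal{M}_{A'}(\beta)_{|X}\simeq\HHAb$; combined with the Cauchy--Kovalevskaya quasi-isomorphism of Remark \ref{restriccion-a-X}, together with its evident analogue for $\cQ_{Y'}(s)$ and $\cQ_Y(s)$ obtained from the exact sequence (\ref{sucs}) as in Remark \ref{CK-gevrey}, this yields an isomorphism $\mathcal{E}xt^0_{\cD_{X'}}(\mathcal{M}_{A'}(\beta),\cQ_{Y'}(s))_{(0,p)}\stackrel{\simeq}{\longrightarrow}\mathcal{E}xt^0_{\cD_X}(\HHAb,\cQ_Y(s))_p$ which, by the discussion in Remark \ref{restricsolutions} (resting on $I_A=I_{A'}\cap\CC[\partial_1,\ldots,\partial_n]$ and on the behaviour of the Euler operator of $(A',\beta)$ under $x_0=0$), is simply the substitution $x_0=0$ on solutions. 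Feeding into it the basis of $\mathcal{E}xt^0_{\cD_{X'}}(\mathcal{M}_{A'}(\beta),\cQ_{Y'}(s))_{(0,p)}$ given by Theorem \ref{basis_of_ext_i_Q_s} for $A'$ produces, when $\beta\notin\NN$, the classes $\overline{(\phi_{v^j|x_0=0})_p}$ for $0\leq j\leq a_{n-1}-1$, and, when $\beta\in\NN$, the classes $\overline{(\phi_{v^j|x_0=0})_p}$ for $j\neq q$ together with $\overline{(\phi_{\widetilde{v^q}|x_0=0})_p}$; since an isomorphism takes a basis to a basis, the theorem follows for these $\beta$. The replacement of $v^q$ by $\widetilde{v^q}$ when $\beta\in\NN$ is forced, because $\phi_{v^q|x_0=0}$ is then either $0$ or a polynomial, hence represents $0$ in $\cQ_Y(s)_p$.

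The third step handles the remaining finitely many $\beta$ and makes the construction intrinsic. Using the explicit formulas recalled in Remark \ref{restricsolutions}, I would check directly, for every $\beta$, that each $\phi_{v^j|x_0=0}$ ($j\neq q$ if $\beta\in\NN$) is a solution of $\HAb$ lying in $x_{n-1}^{(\beta-j)/a_{n-1}}\CC[[x_1,\ldots,x_{n-2},x_{n-1}^{-1},x_n]]$, that it is nonzero (because $(\beta-j)/a_{n-1}\notin\NN$, so no coefficient $(\frac{\beta-j}{a_{n-1}})_{m_{n-1}}$ vanishes and the index set $\Delta_j$ is infinite), and that it is Gevrey of order exactly $a_n/a_{n-1}$ along $Y$, by the d'Alembert ratio test on the subsum over $\underline{\lambda}^{(j)}+\NN(0,\ldots,0,a_n,a_{n-1})$. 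When $\beta\in\NN$, a computation parallel to the one in the proof of Theorem \ref{basis_of_ext_i_Q_s} shows that $\phi_{\widetilde{v^q}|x_0=0}$ is Gevrey of order $\leq a_n/a_{n-1}$ with $\HAb(\phi_{\widetilde{v^q}|x_0=0})\subset\cO_X(X\setminus Z)$, so that $\overline{(\phi_{\widetilde{v^q}|x_0=0})_p}$ is a well-defined element of $\mathcal{E}xt^0_{\cD_X}(\HHAb,\cQ_Y(s))_p$ for $p\in Y\setminus Z$; its nonvanishing modulo $\cO_{X|Y,p}$ can be obtained by using Proposition \ref{M-a-beta-isom} for $A$ to move $\beta$ to a nearby value $\beta'$ in the same class (for which Step 2 applies) together with the isomorphism $\partial^w\colon\mathcal{M}_{A}(\beta')\to\HHAb$ used in Remark \ref{restricsolutions}. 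Finally, linear independence modulo $\cO_{X|Y,p}$ of the $a_{n-1}$ resulting classes follows exactly as at the end of the proof of Theorem \ref{basis_of_ext_i_Q_s}: a dependence relation would give a function holomorphic at $p$; the loop $\log x_{n-1}\mapsto\log x_{n-1}+2\pi i$ multiplies each $\phi_{v^j|x_0=0}$ by $e^{2\pi i(\beta-j)/a_{n-1}}\neq 1$ (since $(\beta-j)/a_{n-1}\notin\ZZ$) while changing $\phi_{\widetilde{v^q}|x_0=0}$ only by a holomorphic term, and subtracting the two relations kills all $\lambda_j$ with $j\neq q$, after which $\lambda_q=0$ because $\phi_{\widetilde{v^q}|x_0=0}$ is divergent. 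Combined with Step 1, this proves the theorem.

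I expect the main obstacle to be the case $\beta\in\NN$, and especially $\beta\in\NN\setminus\NN A$: there the naive restriction $\phi_{v^q|x_0=0}$ degenerates, and one must both justify the replacement by $\phi_{\widetilde{v^q}|x_0=0}$ and verify, \emph{uniformly in $\beta$} rather than only generically, that the latter is a Gevrey solution modulo $\cO_X$ of order precisely $a_n/a_{n-1}$ and that the new family is still linearly independent mod $\cO_{X|Y,p}$. This is exactly where the $\cD$-module isomorphisms between hypergeometric systems with parameters in the same class (Proposition \ref{M-a-beta-isom}) and the monodromy argument around $Z$ become indispensable.
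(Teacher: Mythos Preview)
Your proposal is correct and follows essentially the same route as the paper. The paper does not give a detailed proof of this theorem; it simply says ``Using previous discussion [i.e., Remark \ref{restricsolutions}] and similar ideas to the ones of Section \ref{case-smooth-monomial-curve} we can prove the following Theorem,'' and your three steps---dimension count via Theorem \ref{restriction1n}, transport of the basis from $A'$ to $A$ by substitution $x_0=0$ using Theorem \ref{restriccioninversa} and Remark \ref{restriccion-a-X}, and the monodromy argument around $Z$ together with Proposition \ref{M-a-beta-isom} for the exceptional parameters---are exactly what the paper has in mind.
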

%
%

\begin{remark}\label{sol_generic_point_a1an} We can also compute the holomorphic solutions
of $\cM_A(\beta)$ at any point in $X\setminus Y$ for $A=(a_1 \; a_2
\; \ldots \; a_n)$ with $0< a_1 < a_2 < \ldots < a_n$ and for any
$\beta \in \CC$, where $Y=(x_n=0)\subset X=\CC^n$ (see \cite[Sec.
2.1]{fernandez-castro-dim2-2008} and Remark
\ref{sol_generic_point_1a2an}). As in the beginning of Section
\ref{case_monomial_curve} let us consider the auxiliary matrix
$A'=(1\; a_1 \; a_2 \; \ldots \; a_n)$ and the notation therein.

Let us consider the vectors $w^{j} = (j,0,\ldots ,0, \frac{\beta -
j}{a_n })\in \CC^{n+1}$, $j=0,1,\ldots ,a_n -1$.  Then the germs at
$p'=(0,p) \in X' \setminus Y' $ (with $p\in X\setminus Y$) of the
series solutions $\{ \phi_{w^{j}}:\; j=0,1,\ldots ,a_n -1\}$ is a
basis of $\cE xt^0_{\cD'}(\cM_{A'}(\beta),\cO_{X'})_{p'}$. Taking
$$\{ \phi_{w^{j}|x_0 =0}:\; j=0,1,\ldots ,a_n -1\}$$ we get a basis
of $\cE xt^0_{\cD}(\cM_A(\beta),\cO_X)_p$ for $\beta\in \CC$ such
that $\beta \notin \N \setminus \N A$ at any point $p\in X \setminus
Y$. When $\beta \in \N \setminus \N A$ we can proceed as in Remark
\ref{restricsolutions}.
\end{remark}

\section*{Conclusions}
1) In Sections \ref{case-smooth-monomial-curve} and
\ref{case_monomial_curve} we have proved that the irregularity
complex $\Irr_Y^{(s)}(\cM_A(\beta))$ is zero for $1\leq s <
a_{n}/a_{n-1}$ and concentrated in degree 0 for $a_{n}/a_{n-1}\leq
s\leq \infty$ (see Theorems \ref{teorext} and \ref{restriction1n}).
Here $A$ is a row integer matrix $(a_1\, a_2\, \cdots \, a_n)$  with
$0<a_1 < a_2 < \cdots < a_n$ and $\beta$ is a parameter in $\CC$. We
have reduced the case $a_1>1$ to the one where $a_1=1$ and then to
the two dimensional case treated in
\cite{fernandez-castro-dim2-2008}.

2) We have described a basis of $\cE
xt^0_{\cD_X}(\cM_A(\beta),\cQ_Y(s))_p$ for $p\in Y\setminus Z$ and
$a_{n}/a_{n-1}\leq s \leq \infty$ (see Theorems
\ref{basis_of_ext_i_Q_s} and \ref{basis_ext_0_a1_an}). Here
$Y=(x_n=0)\subset X=\CC^n$ and $Z=(x_{n-1}=0)\subset X$. From the
form of the basis it is  easy to see that the eigenvalues of the
corresponding monodromy, with respect to $Z$, are simply
$\exp(\frac{2\pi i (\beta - k)}{a_{n-1}})$ for
$k=0,\ldots,a_{n-1}-1$. Notice that for $\beta\in \ZZ$ one
eigenvalue (the one corresponding to the unique
$k=0,\ldots,a_{n-1}-1$ such that $\frac{\beta - k}{a_{n-1}}\in \ZZ$)
is just 1. See Remark \ref{restricsolutions} for notations.

\end{document}